\providecommand{\tabularnewline}{\\}
\numberwithin{equation}{section}
\numberwithin{figure}{section}
\theoremstyle{plain}
\newtheorem{theorem}{Theorem}[section]
\newtheorem{lemma}{Lemma}[section]
\theoremstyle{definition}
\newtheorem{definition}{Definition}[section]
\theoremstyle{remark}
\newtheorem{remark}{Remark}[section]
\newcommand{\diam}{\operatorname{diam}}
\newcommand{\dD}{\operatorname{D}}
\newcommand{\dG}{\operatorname{G}}
\newcommand{\length}{\operatorname{length}}
\newcommand{\area}{\operatorname{area}}
\newcommand{\Span}{\operatorname{Span}}
\newcommand{\bD}{\mathbb{D}}
\newcommand{\bE}{\mathbb{E}}
\newcommand{\bI}{\mathbb{I}}
\newcommand{\bR}{\mathbb{R}}
\providecommand{\ip}[1]{\langle#1\rangle}
\providecommand{\Ip}[1]{\left\langle#1\right\rangle}
\providecommand{\bigip}[1]{\bigl\langle#1\bigr\rangle}
\providecommand{\set}[1]{\{#1\}}
\providecommand{\Set}[1]{\left\{#1\right\}}
\providecommand{\abs}[1]{\lvert#1\rvert}
\providecommand{\Abs}[1]{\left\lvert#1\right\rvert}
\providecommand{\norm}[1]{\lVert#1\rVert}
\renewcommand{\vec}[1]{\boldsymbol{#1}}
\begin{document}
	\title[Stable and convergent method for Hodge decomposition]{A stable and convergent method for Hodge decomposition of fluid-solid  interaction}
\author{Gangjoon Yoon}
\address{National Institute for Mathematical Sciences, Daejeon 34047, Korea}
\email{yoon@nims.re.kr}
\author{Chohong Min$^{\dag}$}
\address{Department of Mathematics, Ewha Woman's University, Seoul 03760,
	Korea}
\email{chohong@ewha.ac.kr}
\thanks{$^{\dag}$ Corresponding author. C. Min was supported by Basic Science
	Research Program through the National Research Foundation of Korea(NRF)
	funded by the Ministry of Education(2009-0093827).}
\author{Seick Kim}
\address{Department of Mathematics, Yonsei University, Seoul 03722, Korea}
\email{kimseick@yonsei.ac.kr}
\thanks{S. Kim is supported by NRF-20151009350.}
	
	\date{}
\subjclass[2000]{Primary 76D03, 65N06, 76M20; Secondary 35Q30, 35J25}
\keywords{Fluid-solid interaction, Helmholtz-Hodge decomposition, Augmented
	Hodge decomposition, Numerical analysis}
	
\begin{abstract}
Fluid-solid interaction has been a challenging subject due to their 
strong nonlinearity and multidisciplinary nature. Many of the numerical methods for solving FSI problems have struggled with non-convergence and numerical instability.  In spite of comprehensive studies, it has been still a challenge to develop a method that guarantees both convergence and stability. 

Our discussion in this work is restricted to the interaction of viscous incompressible fluid flow and a rigid body. We take the monolithic approach by Gibou and Min \cite{GibouMin} that results in an extended Hodge projection. The projection updates not only the fluid vector field but also the solid velocities.
We derive the equivalence of the extended Hodge projection to the Poisson equation with non-local Robin boundary condition. 
We prove the existence, uniqueness, and regularity for the weak solution of the Poisson equation, through which the Hodge projection is shown to be unique and orthogonal. Also, we show
the stability of the projection in a sense that the projection does not increase the total kinetic energy of fluid and solid. Also, we discusse a numerical
method  as a discrete analogue to the Hodge projection, then we
show that the unique decomposition and orthogonality also hold in
the discrete setting. As one of our main results, we prove that the numerical solution is convergent with at least the first order accuracy. We carry out numerical experiments in two and three dimensions, which validate our analysis and arguments. 

\end{abstract}
	\maketitle

\section{Introduction}
In this article, we consider the interaction of fluid and structure. An immersed structure in fluid interacts with fluid in two ways. On their interface, 
fluid cannot penetrate into structure and the motion of structure is affected by the normal
stress of fluid. Fluid-structure interaction (FSI) has been a challenging subject due to their
strong nonlinearity and multidisciplinary nature (\cite{Chakrabarti}, \cite{Dowell:Hall}, \cite{Morand:Ohayon}). In many scientific and engineering
areas, FSI problems play prominent roles, but it is hard or impossible to find exact solutions to the problems, so that
their solutions should have been approximated by numerical solutions.

For approximating FSI problems, numerous numerical methods have been proposed. Most of them
have struggled with non-convergence (\cite{Borazjani:Ge:Sotiropoulos}, \cite{Causin:Gerbeau:Nobile}, \cite{Forster:Wall:Ramm}, \cite{Guermond:Minev:Shen}) and numerical instability \cite{Uhlman}. Only few of them have guaranteed stability (\cite{Bridson}, \cite{GibouMin}, \cite{Gretarsson:Kwatra:Fedkiw}). 
In spite of comprehensive studies, it has been still
a challenge to develop a method that guarantees both convergence and stability.

There are two main approaches to obtain a numerical method for the FSI problems : Monolithic and partitioned approaches. The monolithic approach treats the fluid and structure dynamics in the same mathematical framework, and the partitioned approach treats them separately. The  monolithic approach \cite{Hubner:Walhorn:Dinkler, Michler:et:al:2004, Ryzhakov:et:al:2010, Walhorn:Kolke:Hubner:Dinkler} solves simultaneously 
 the linearized equations of fluid-structure coupling conditions in one system. Even though this approach achieves better accuracy for the multidisciplinary problem, usually it requires well-designed preconditioners and costs quite expensive computational time \cite{Badia:et:al:2008, Gee:et:at:2011, Heil:Hazel:Boyle}. To reduce the computational time, the partitioned approach treats the structure and the fluid as the two physical fields and solves them separately. Even though the partitioned methods have been extensively studied and developed \cite{Badia:Nobile:Vergara:2008, Bukac:Yorov:Zunino, Degroote:et:al:2008, Farhat:Zee:Geuzaine, Nobile:Vergara}, this approach still requires the construction of efficient schemes to produce stable, accurate results. Despite the recent developments, only a few partitioned methods have guaranteed the convergence \cite{Fernandez, Fernandez:Mullaert, Muha:Canic}.   

Our discussion is restricted to the interaction of viscous incompressible fluid flow and a rigid body.
The incompressible fluid flow is governed by the Navier-Stokes equations and the motion rigid body is governed by the Euler equations. On their interface $\Gamma(t),$ fluid and solid interacts in two ways. They should have the same normal velocity component and fluid delivers the net force of normal stress to solid. Combining the governing equations and taking into consideration the two-way couplings, we have the following two syatems of equations for fluid and solid:
consisting of the system of momentum equation and incompressible condition:
\begin{equation}
\left\{ \begin{aligned}\rho(\vec{U}_{t}+\vec{U}\cdot\nabla\vec{U}) & =-\nabla p+\nabla\cdot\left(2\mu\bD\right)\quad\text{in }\;\Omega\\
\nabla\cdot\vec{U} & =0\quad\text{in }\;\Omega.
\end{aligned}
\right.\label{eq:Navier_Stokes}
\end{equation}
and
\begin{equation}
\left\{ 
\begin{aligned}
\dot{\vec{c}} & =\vec{v}\\
m\dot{\vec{v}} & =f\\
\bI\dot{\vec{\omega}} & =\tau. 
\end{aligned}
\right.\label{eq:Solid_eqs}
\end{equation}
Here, $\vec{U}$ is the flow vector field and 
$\vec{v}(t)$ and $\vec{\omega}(t)$
denote the linear and angular velocities at the center of mass $\vec{c}(t)$
of the rigid body, respectively. With $\vec{J}:=(\vec{x}-\vec{c})\times\vec{n},$ note that $
\vec{U}\cdot\vec{n}=\left(\vec{v}+\vec{\omega}\times(\vec{x}-\vec{c})\right)\cdot\vec{n}$ describes the non-penetration boundary condition at the interface. The other notations follow the standard settings and we refer to \cite{GibouMin} for more details.

The incompressible condition prevents the development of a discontinuity,
and a standard approach to approximate the time derivatives in the
system of equations \eqref{eq:Navier_Stokes}and \eqref{eq:Solid_eqs}, and  is the second order SL-BDF method \cite{GibouMin, Xiu:Karniadakis},
which can be written in the followoing two steps. The first step is
to solve the system with a pressure guess.
\[
\left\{ \begin{array}{rcl}
\rho\frac{\frac{3}{2}U^{*}-2U^{n}+\frac{1}{2}U^{n-1}}{\Delta t} & = & -\nabla p^{n}+\nabla\cdot\left(2\mu\mathbb{D}^{*}\right)\textrm{ in }\Omega^{n+1}\\
\frac{\frac{3}{2}c^{*}-2c^{n}+\frac{1}{2}c^{n-1}}{\Delta t} & = & 2v^{n}-v^{n-1}\\
m\frac{\frac{3}{2}v^{*}-2v^{n}+\frac{1}{2}v^{n-1}}{\Delta t} & = & \intop_{\Gamma^{n+1}}\left(p^{n}-2\mu\mathbb{D}^{*}\right)n\, ds\\
\mathbb{I}\frac{\frac{3}{2}\omega^{*}-2\omega^{n}+\frac{1}{2}\omega^{n-1}}{\Delta t} & = & \intop_{\Gamma^{n+1}}\left(x-c^{n+1}\right)\times\left(p^{n}-2\mu\mathbb{D}^{*}\right)n\, ds
\end{array}\right.
\]
The second step is basically to enforce the divergence-free condition
and impose the two-way coupled interactions.
\[
\left\{ \begin{array}{rclc}
\rho U^{*} & = & \rho U^{n+1}+\nabla q^{n+1} & \textrm{ in }\Omega^{n+1}\\
\nabla\cdot U^{n+1} & = & 0 & \textrm{ in }\Omega^{n+1}\\
U^{n+1}\cdot n & = & v^{n+1}\cdot n+\omega^{n+1}\cdot J & \textrm{ in }\Gamma^{n+1}\\
mv^{*} & = & mv^{n+1}-\intop_{\Gamma^{n+1}}q^{n+1}n\, ds\\
\mathbb{I}\omega^{*} & = & \mathbb{I}\omega^{n+1}-\intop_{\Gamma^{n+1}}q^{n+1}J\, ds
\end{array}\right.
\]
The conventional Hodge decomposition splits a vector field into a unique sum of the divergence-free vector field and the gradient field. We take the monolithic approach that updates not only the fluid vector field but also the solid velocities. As we shall discuss in details in Section \ref{sec2}, the monolithic extension of the conventional Hodge projection that are denoted by
\[
\left(\vec{U}^{n+1},\vec{v}^{n+1}, \vec{\omega}^{n+1}\right)=
\mathcal{P}\left( \vec{U}^{*},\vec{v}^{*},\vec{\omega}^{*}\right).
\]
We derive the equivalence of the extended Hodge projection to the Poisson equation with non-local Robin boundary condition. In Section \ref{sec3}, we prove the existence, uniqueness, and regularity for the weak solution of the Poisson equation, through which the Hodge projection is shown to be unique and orthogonal. Also, we show
the stability of the projection in a sense that the projection does not increase the total kinetic energy of fluid and solid. Section~\ref{sec4} discusses the numerical
method \cite{GibouMin} as a discrete analogue to the Hodge projection, and
shows that the unique decomposition and orthogonality also hold in
the discrete setting. In Section \ref{sec5}, we prove that the numerical solution is convergent with at least the first order accuracy. In Section \ref{sec6}, we carry out numerical experiments in two and three dimensions. The numerical tests validate our analysis and arguments. 

\section{Monolithic decomposition}\label{sec2}

Let $\Omega\subset\bR^{d}$ ($d=2,3$) be a bounded domain (i.e.,
open connected set) with boundary $\partial\Omega=\Gamma\cup\Gamma'$,
where $\Gamma\cap\Gamma'=\emptyset$. We assume that the domain $\Omega$
is occupied by an incompressible fluid and $\Gamma$ is the interface
with the fluid and a solid; the other part $\Gamma'$ of the boundary 
is always fixed. Assume that a fluid vector field $\vec{U}^{*}:\Omega\to\bR^{d}$
and a solid linear velocity $\vec{v}^{*}\in\bR^{d}$ and angular velocity
$\vec{\omega}^{*}\in\bR^{d}$ are given. After the fluid-solid interaction
and the incompressible condition applied, the fluid vector field becomes
solenoidal and the fluid vector field and solid vector field have
the same normal component on the fluid-solid interface $\Gamma$.

\begin{figure}
	\begin{centering}
		\includegraphics[scale=0.5]{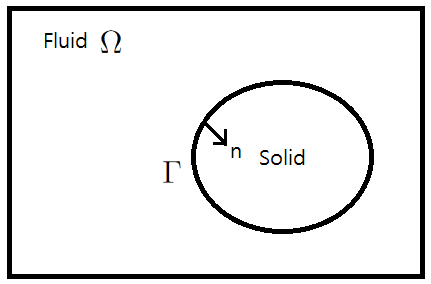} 
		\par\end{centering}
	
	\protect\caption{Fluid-solid interaction}
\end{figure}

In this work, we try to decompose $(\vec{U}^{*},\vec{v}^{*},\vec{\omega}^{*})$
as 
\begin{equation}
\left\{ \begin{aligned}\rho\vec{U}^{*} & =\rho\vec{U}+\nabla p\quad\text{in }\;\Omega\\
m\vec{v}^{*} & =m\vec{v}-{\displaystyle \int_{\Gamma}p\vec{n}\, dS}\\
\bI\vec{\omega}^{*} & =\bI\vec{\omega}-{\displaystyle \int_{\Gamma}p\vec{J}\, dS}
\end{aligned}
\right.\label{eq-main-decomposition}
\end{equation}

\noindent with a vector field $\vec{U}:\Omega\to\mathbb{R}^{d}$ and
a scalar field $p:\Omega\to\mathbb{R}$ and velocities $\vec{v},\vec{\omega}\in\mathbb{R}^{d}$
that satisfy the incompressible condition and the non-penetration
condition

\begin{equation}
\left\{ \begin{aligned}\nabla\cdot\vec{U} & =0\quad\text{in }\;\Omega\\
\vec{U}\cdot\vec{n} & =\vec{v}\cdot\vec{n}+\vec{\omega}\cdot\vec{J}\quad\text{on }\;\Gamma\\
\vec{U}\cdot\vec{n} & =0\quad\text{on }\;\Gamma'.
\end{aligned}
\right.\label{eq-boundary-Cdns}
\end{equation}

When we find a scalar function $p$ in \eqref{eq-main-decomposition}, the decomposition is achieved monolithically by setting $(\vec{U},\vec{v},\vec{\omega})$ as
$$
(\vec{U},\vec{v},\vec{\omega}):=(\vec{U}^{*},\vec{v}^{*},\vec{\omega}^{*})-\left(\frac{1}{\rho}\nabla p,\;-\frac{1}{m}\int_{\Gamma}p\vec{n}\, dS,\;-\mathbb{I}^{-1}\int_{\Gamma}p\vec{J}\, dS\right).
$$
We call $(\vec{U},\vec{v},\vec{\omega})$ the augmented Hodge projection of the state variable $(\vec{U}^{*},\vec{v}^{*},\vec{\omega}^{*}),$
$$
(\vec{U},\vec{v},\vec{\omega})=\mathcal{P} \left( \vec{U}^{*},\vec{v}^{*},\vec{\omega}^{*} \right),
$$
and we will check the stability of the projection by estimating the kinetic energies and 
show the orthogonality of the decomposition \eqref{eq-main-decomposition} with respect to the inner product induced by the kinetic energy.

The existence and uniqueness of the decomposition will be shown in the following section where we will find $p$ by solving a Poisson equation and study more details on the equation. 

Throughout this work, $\rho$ is the fluid density, which is assumed
to be constant; $m$ is the mass of a rigid body whose boundary is
$\Gamma$; $\vec{c}$ is the center of mass of the rigid body; $\bI\in\bR^{d\times d}$
is a symmetric positive definite matrix, the inertia tensor of the
rigid body. And $\vec{n}$ is the unit normal vector field of $\Gamma$
and $\vec{J}$ is a vector field defined on $\Gamma$ by $\vec{J}(\vec{x})=(\vec{x}-\vec{c})\times\vec{n}$
for $\vec{x}\in\Gamma$.

In case $d=2,$ we regard vectors in $\mathbb{R}^{2}$ as vectors
in $\mathbb{R}^{3}$ by a trivial extension as follows. The angular
velocity $\vec{\omega}$ reads as $\vec{\omega}=(0,0,\omega)$, and
for $\vec{a}=(a_{1},a_{2})$ and $\vec{b}=(b_{1},b_{2}),$ the cross
product of $\vec{a}$ and $\vec{b}$ is defined to be a scalar $a_{1}b_{2}-a_{2}b_{1}$
or the vector on $z$-axis given by $\vec{a}\times\vec{b}=(a_{1},a_{2},0)\times(b_{1},b_{2},0)$.
In this case, however, the vector calculation is fulfilled based on
$\mathbb{R}^{2}$, which may cause no confusion.

We begin by introducing the following lemma, which plays an essential role in this work.

\begin{lemma} \label{lemma-zero-nj} In the above setting, $\int_{\Gamma}\vec{n}\, dS=0$
	and $\int_{\Gamma}\vec{J}\, dS=0$. \end{lemma}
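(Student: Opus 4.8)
The plan is to observe that $\Gamma$ is a \emph{closed} hypersurface: it bounds the rigid solid region, which I denote by $B$, so that $\partial B=\Gamma$ and $\vec{n}$ is, up to a global sign that turns out to be irrelevant, the outward unit normal of $B$. Both identities are then purely geometric and reduce to the divergence theorem applied on $B$, once each vector-valued integral is broken into its scalar components. So my first move is to set up $B$ and fix this orientation convention.

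For the first identity I would test against the coordinate directions. Fixing a direction $j$ and applying the divergence theorem to the constant field $\vec{e}_{j}$ gives $\int_{\Gamma}n_{j}\,dS=\int_{\Gamma}\vec{e}_{j}\cdot\vec{n}\,dS=\int_{B}\nabla\cdot\vec{e}_{j}\,dV=0$, since a constant field is divergence-free. As this holds for every $j$, we obtain $\int_{\Gamma}\vec{n}\,dS=0$. Equivalently, one may invoke the gradient form $\int_{B}\nabla\phi\,dV=\int_{\Gamma}\phi\vec{n}\,dS$ with $\phi\equiv1$.

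For the second identity I would expand $\vec{J}=(\vec{x}-\vec{c})\times\vec{n}$ in components. Writing $r_{i}=x_{i}-c_{i}$, the $k$-th component of $\vec{J}$ is $\sum_{i,j}\varepsilon_{kij}r_{i}n_{j}$, so it suffices to evaluate $\int_{\Gamma}r_{i}n_{j}\,dS$. The divergence theorem gives $\int_{\Gamma}r_{i}n_{j}\,dS=\int_{B}\partial_{x_{j}}r_{i}\,dV=\delta_{ij}\abs{B}$, because $\partial_{x_{j}}(x_{i}-c_{i})=\delta_{ij}$. Hence the $k$-th component of $\int_{\Gamma}\vec{J}\,dS$ equals $\abs{B}\sum_{i,j}\varepsilon_{kij}\delta_{ij}=\abs{B}\sum_{i}\varepsilon_{kii}=0$ by the antisymmetry of the Levi-Civita symbol. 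Equivalently, one can use the identity $\int_{\Gamma}\vec{n}\times\vec{F}\,dS=\int_{B}\nabla\times\vec{F}\,dV$ with $\vec{F}=\vec{x}-\vec{c}$ and note that $\nabla\times(\vec{x}-\vec{c})=0$.

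No step here is genuinely difficult; the only points requiring care are the orientation convention, which is immaterial since the answer is $0$, and the fact that $\Gamma$ is closed, which is precisely what forces the boundary integrals to vanish. Were $\Gamma$ merely a surface patch, neither identity would hold, so this is the essential hypothesis to keep visible. In the planar case $d=2$ the identical computation applies under the stated convention $\vec{J}=r_{1}n_{2}-r_{2}n_{1}$, yielding $\abs{B}(\delta_{12}-\delta_{21})=0$.
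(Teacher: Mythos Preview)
Your proof is correct and takes essentially the same approach as the paper: introduce the solid region bounded by $\Gamma$ and apply the divergence theorem, using that $\nabla 1=0$ for the first identity and $\nabla\times(\vec{x}-\vec{c})=0$ for the second. Your ``equivalently'' remarks are in fact exactly what the paper writes; your component-wise version with the Levi-Civita symbol just unpacks the same computation.
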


\begin{proof} Let $\Omega'$ be a domain (occupied by a solid) whose
	boundary is $\Gamma$. We apply the Gauss-Green theorem to $\Omega'$
	and get 
	\begin{align*}
	\int_{\Gamma}\vec{n}\, dS & =-\int_{\Gamma}1\left(-\vec{n}\right)\, dS=-\int_{\Omega'}(\nabla1)\, dx=0\quad\text{and}\\
	\int_{\Gamma}\vec{J}\, dS & =-\int_{\Gamma}(\vec{x}-\vec{c})\times\left(-\vec{n}\right)\, dS=\int_{\Omega'}\nabla\times(\vec{x}-\vec{c})\, dx=0.\qedhere
	\end{align*}
\end{proof}

\subsection{Orthogonality and stability}
For a given triple $(\vec{U}^{*},\vec{v}^{*},\vec{\omega}^{*})$
with a fluid vector field $\vec{U}^{*}:\Omega\to\mathbb{R}^{d}$ and
a solid linear velocity $\vec{v}^{*}\in\mathbb{R}^{d}$ and an angular
velocity $\vec{\omega}^{*}\in\mathbb{R}^{d},$ the kinetic energy
$E^{*}$ is given as 
\[
E^{*}=\intop_{\Omega}\frac{1}{2}\rho\,\abs{\vec{U}^{*}}^{2}\, dx+\frac{1}{2}m\abs{\vec{v}^{*}}^{2}+\frac{1}{2}\vec{\omega}^{*}\cdot\mathbb{I}\vec{\omega}^{*}.
\]
Let $(\vec{U},\vec{v},\vec{\omega})$ and $p$ be as in the decomposition
\eqref{eq-main-decomposition} of $(\vec{U}^{*},\vec{v}^{*},\vec{\omega}^{*})$
and let $E$ the energy induced by the triple $(\vec{U},\vec{v},\vec{\omega})$,
which is given as 
\[
E=\intop_{\Omega}\frac{1}{2}\rho\,\abs{\vec{U}}^{2}\, dx+\frac{1}{2}m\abs{\vec{v}}^{2}+\frac{1}{2}\vec{\omega}\cdot\mathbb{I}\vec{\omega}.
\]
Now, we estimate the energies $E^{*}$ and $E$. Applying the decomposition
\eqref{eq-main-decomposition}, we have 
\begin{align*}
E^{*} & =\int_{\Omega}\frac{1}{2}\rho\Abs{\vec{U}+\rho^{-1}\nabla p}^{2}\, dx+\frac{1}{2}m\Abs{\vec{v}-\frac{1}{m}\int_{\Gamma}p\vec{n}\, dS}^{2}\\
& \qquad\qquad+\frac{1}{2}\left(\vec{\omega}-\mathbb{I}^{-1}\int_{\Gamma}p\vec{J}\, dS\right)\cdot\mathbb{I}\left(\vec{\omega}-\mathbb{I}^{-1}\int_{\Gamma}p\vec{J}\, dS\right)\\
& =\int_{\Omega}\frac{1}{2}\rho\abs{\vec{U}}^{2}\, dx+\frac{1}{2}m\abs{\vec{v}}^{2}+\frac{1}{2}\vec{\omega}\cdot\mathbb{I}\vec{\omega}\\
& \qquad\qquad+\int_{\Omega}\vec{U}\cdot\nabla p\, dx-\vec{v}\cdot\int_{\Gamma}p\vec{n}\, dS-\vec{\omega}\cdot\int_{\Gamma}p\vec{J}\, dS\\
& \qquad\qquad+\int_{\Omega}\frac{1}{2\rho}\,\abs{\nabla p}^{2}\, dx+\frac{1}{2m}\Abs{\int_{\Gamma}p\vec{n}\, dS}^{2}+\frac{1}{2}\left(\mathbb{I}^{-1}\int_{\Gamma}p\vec{J}\, dS\right)\cdot\left(\int_{\Gamma}p\vec{J}\, dS\right)\\[6pt]
& \ge\int_{\Omega}\frac{1}{2}\rho\abs{\vec{U}}^{2}\, dx+\frac{1}{2}m\abs{\vec{v}}^{2}+\frac{1}{2}\vec{\omega}\cdot\mathbb{I}\vec{\omega}=E.
\end{align*}
In the above, we used the symmetric positive definiteness of $\bI$
and the incompressible and the non-penetration conditions \eqref{eq-boundary-Cdns}
which imply 
\begin{multline}
\int_{\Omega}\vec{U}\cdot\nabla p\, dx-\vec{v}\cdot\int_{\Gamma}p\vec{n}\, dS-\vec{\omega}\cdot\int_{\Gamma}p\vec{J}\, dS=-\int_{\Omega}\left(\nabla\cdot\vec{U}\right)p\, dx\\
+\int_{\Gamma'}p\vec{U}\cdot\vec{n}\, dS+\int_{\Gamma}p\vec{U}\cdot\vec{n}\, dS-\int_{\Gamma}p\vec{v}\cdot\vec{n}\, dS-\int_{\Gamma}p\omega\cdot\vec{J}\, dS=0.\label{eq-decomposition-orthogonality}
\end{multline}

Equation \eqref{eq-decomposition-orthogonality} implies that the decomposition
is orthogonal with respect to the inner product $\langle\cdot,\cdot\rangle_{E}$
on $L^{2}(\Omega)^{d}\times\mathbb{R}^{d}\times\mathbb{R}^{d}$ defined
by 
\begin{multline}
\bigip{(\vec{U}^{(1)},\vec{v}^{(1)},\vec{\omega}^{(1)}),(\vec{U}^{(2)},\vec{v}^{(2)},\vec{\omega}^{(2)})}_{E}\\
:=\int_{\Omega}\frac{1}{2}\rho\,\vec{U}^{(1)}\cdot\vec{U}^{(2)}\, dx+\frac{1}{2}m\vec{v}^{(1)}\cdot\vec{v}^{(2)}+\frac{1}{2}\vec{\omega}^{(1)}\cdot\mathbb{I}\vec{\omega}^{(2)}.\label{eq-Def-InnerPdt}
\end{multline}
We can write the energy $E$ of a triple $(\vec{U},\vec{v},\vec{\omega})$
in terms of the inner product as 
\[
E=\int_{\Omega}\frac{1}{2}\rho\,\abs{\vec{U}}^{2}\, dx+\frac{1}{2}m\abs{\vec{v}}^{2}+\frac{1}{2}\vec{\omega}\cdot\mathbb{I}\vec{\omega}=\bigip{(\vec{U},\vec{v},\vec{\omega}),(\vec{U},\vec{v},\vec{\omega})}_{E}.
\]

Consequently, we have shown that the projection $
(\vec{U},\vec{v},\vec{\omega})=\mathcal{P} \left( \vec{U}^{*},\vec{v}^{*},\vec{\omega}^{*} \right),
$ is stable as the kinetic energy $E$ of the projection does not increase comparing with the energy $E^*$ of the state variable. In summary, we have the following:

\begin{theorem} \label{thm-energyEstimate} For a given triple $(\vec{U}^{*},\vec{v}^{*},\vec{\omega}^{*}),$
	let the triple $(\vec{U},\vec{v},\vec{\omega})$ be given by 
	\begin{equation}
	\Big(\vec{U}^{*},\vec{v}^{*},\vec{\omega}^{*}\Big)=\Big(\vec{U},\vec{v},\vec{\omega}\Big)+\left(\frac{1}{\rho}\nabla p,-\frac{1}{m}\int_{\Gamma}p\vec{n}\, dS,-\mathbb{I}^{-1}\int_{\Gamma}p\vec{J}\, dS\right)\label{eq-decomposition2}
	\end{equation}
	for some scalar function $p$. If $(\vec{U},\vec{v},\vec{\omega})$
	satisfies the conditions 
	\[
	\left\{ \begin{aligned}\nabla\cdot\vec{U} & =0\quad\text{in }\;\Omega\\
	\vec{U}\cdot\vec{n} & =\vec{v}\cdot\vec{n}+\vec{\omega}\cdot\vec{J}\quad\text{on }\;\Gamma,\\
	\vec{U}\cdot\vec{n} & =0\quad\text{on }\;\Gamma'.
	\end{aligned}
	\right.
	\]
	then the decomposition \eqref{eq-decomposition2} is orthogonal
	with respect to the inner product \eqref{eq-Def-InnerPdt}. Furthermore,
	the kinetic energy decreases in the sense $E^{*}\ge E$. \end{theorem}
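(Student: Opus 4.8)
The plan is to prove both assertions at once by a single direct expansion of the energy $E^{*}$. Substituting the decomposition \eqref{eq-decomposition2} into the definition of $E^{*}$ and expanding each squared term, I would organize the result into three blocks: the energy $E$ of the triple $(\vec{U},\vec{v},\vec{\omega})$, a collection of cross terms that are linear in $p$, and a block of residual terms that are quadratic in $p$. The first block reproduces $E$ verbatim, the third block will turn out to be manifestly nonnegative, so the whole theorem reduces to showing that the cross-term block vanishes.

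The cross-term block is exactly
\[
\int_{\Omega}\vec{U}\cdot\nabla p\, dx-\vec{v}\cdot\int_{\Gamma}p\vec{n}\, dS-\vec{\omega}\cdot\int_{\Gamma}p\vec{J}\, dS,
\]
which, up to the factor $\tfrac12$, is the energy inner product \eqref{eq-Def-InnerPdt} of $(\vec{U},\vec{v},\vec{\omega})$ with the gradient/pressure component $\bigl(\rho^{-1}\nabla p,\,-m^{-1}\int_{\Gamma}p\vec{n}\,dS,\,-\mathbb{I}^{-1}\int_{\Gamma}p\vec{J}\,dS\bigr)$; hence its vanishing is precisely the claimed orthogonality. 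To establish it I would integrate by parts in the first integral via the Gauss--Green theorem, producing $-\int_{\Omega}(\nabla\cdot\vec{U})p\,dx$ together with boundary integrals over $\Gamma\cup\Gamma'$. The volume term drops by $\nabla\cdot\vec{U}=0$; the integral over $\Gamma'$ drops since $\vec{U}\cdot\vec{n}=0$ there; and on $\Gamma$ the non-penetration condition $\vec{U}\cdot\vec{n}=\vec{v}\cdot\vec{n}+\vec{\omega}\cdot\vec{J}$ lets me split $\int_{\Gamma}p\,\vec{U}\cdot\vec{n}\,dS$ into $\vec{v}\cdot\int_{\Gamma}p\vec{n}\,dS+\vec{\omega}\cdot\int_{\Gamma}p\vec{J}\,dS$, cancelling the two solid terms exactly. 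This is identity \eqref{eq-decomposition-orthogonality}.

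It then remains to read off the inequality from the residual block, which consists of $\tfrac{1}{2\rho}\int_{\Omega}\abs{\nabla p}^{2}\,dx$, the term $\tfrac{1}{2m}\bigabs{\int_{\Gamma}p\vec{n}\,dS}^{2}$, and the quadratic form $\tfrac12\bigl(\mathbb{I}^{-1}\int_{\Gamma}p\vec{J}\,dS\bigr)\cdot\bigl(\int_{\Gamma}p\vec{J}\,dS\bigr)$. The first two are obviously nonnegative, and the last is nonnegative because $\mathbb{I}$, and hence $\mathbb{I}^{-1}$, is symmetric positive definite; writing it as $\tfrac12\,w^{\!\top}\mathbb{I}^{-1}w$ with $w=\int_{\Gamma}p\vec{J}\,dS$ makes this transparent. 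Adding the three blocks gives $E^{*}=E+(\text{nonnegative})$, i.e. $E^{*}\ge E$.

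I do not anticipate a genuine analytic obstacle: the content is entirely the one integration by parts together with the bookkeeping that matches the $\Gamma$-boundary term against the solid linear- and angular-velocity terms. The only points requiring care are justifying the Gauss--Green step (which needs $\vec{U}$ and $p$ regular enough, e.g. $\vec{U}\in H^{1}$ with $\nabla\cdot\vec{U}\in L^{2}$ and $p\in H^{1}$, so that the normal trace $\vec{U}\cdot\vec{n}$ is well defined on $\partial\Omega$) and keeping the constants $\rho$, $m$, $\mathbb{I}$ attached correctly while the squared norms are expanded. I note that Lemma \ref{lemma-zero-nj} is not needed for this statement; it will instead enter when solvability of the associated Poisson problem is addressed.
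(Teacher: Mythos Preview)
Your proposal is correct and follows essentially the same approach as the paper: expand $E^{*}$ via the decomposition, use the Gauss--Green theorem together with the incompressibility and non-penetration conditions to kill the cross terms (this is exactly identity \eqref{eq-decomposition-orthogonality}), and observe that the residual quadratic-in-$p$ block is nonnegative by the positive definiteness of $\mathbb{I}^{-1}$. Your remark that Lemma~\ref{lemma-zero-nj} is not needed here is also accurate.
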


\section{Poisson equation with nonlocal Robin boundary condition}\label{sec3}
Taking into account all relations in terms of a scalar field $p$,
the decomposition \eqref{eq-main-decomposition} is fulfilled by solving 
the Poisson equation with nonlocal Robin boundary condition: 
\begin{equation}
\left\{ \begin{aligned}-\nabla\cdot\left(\frac{1}{\rho}\,\nabla p\right) & =-\nabla\cdot\vec{U}^{*}\;\text{ in }\;\Omega,\\
\frac{1}{\rho}\frac{\partial p}{\partial n}+\vec{n}\cdot\frac{1}{m}{\displaystyle \int_{\Gamma}p\vec{n}\, dS+\vec{J}\cdot\mathbb{I}^{-1}{\displaystyle \int_{\Gamma}p\vec{J}\, dS}} & =\vec{U}^{*}\cdot\vec{n}-\vec{v}^{*}\cdot\vec{n}-\vec{\omega}^{*}\cdot\vec{J}\;\text{ on }\;\Gamma,\\
\frac{1}{\rho}\frac{\partial p}{\partial n} & =\vec{U}^{*}\cdot\vec{n}\;\text{ on }\;\Gamma'.
\end{aligned}
\right.\label{eq-governingEq-FSI}
\end{equation}
In this section, we estimate the existence, uniqueness, and regularity of a solution $p$ to the Poisson equation.

\subsection{Existence and uniqueness}
We will show the existence of a pressure $p$ appearing in the decomposition
\eqref{eq-main-decomposition} - \eqref{eq-boundary-Cdns} as a weak
solution of a Poisson problem with a nonlocal Robin boundary condition; once
$p$ is obtained, we compute $(\vec{U},\vec{v},\vec{\omega})$ by
the formula \eqref{eq-main-decomposition}.

For the sake of generality and to address regularity issue, we consider
the following more general setting: Let $\Omega\subset\bR^{d}$ be
a bounded domain with boundary $\partial\Omega$ decomposed into two
disjoint parts $\partial\Omega=\Gamma\cup\Gamma'$. For $f\in L^{2}(\Omega)$,
$\vec{F}\in L^{2}(\Omega;\bR^{d})$, and $g\in L^{2}(\partial\Omega)=L^{2}(\Gamma\cup\Gamma')$,
we consider the following problem 
\begin{gather}
-\nabla\cdot(\mathbb{A}\nabla u)=f-\nabla\cdot\vec{F}\;\text{ in }\;\Omega,\label{eq:2-05ri}\\
\mathbb{A}\nabla u\cdot\vec{n}=\vec{F}\cdot\vec{n}+g\;\text{ on }\;\Gamma',\label{eq:2.05ri}\\
\mathbb{A}\nabla u\cdot\vec{n}+\left[\alpha\int_{\Gamma}u\vec{n}\, dS+\left(\mathbb{B}\int_{\Gamma}u\vec{r}\times\vec{n}\, dS\right)\times\vec{r}\right]\cdot\vec{n}=\vec{F}\cdot\vec{n}+g\;\text{ on }\;\Gamma.\label{eq:2.06ri}
\end{gather}
Here, $\mathbb{A}$ is a symmetric $d\times d$ matrix valued measurable
function on $\overline{\Omega}$ satisfying the uniform ellipticity
condition; i.e., there are constants $0<\lambda\le\Lambda<+\infty$
such that 
\begin{equation}
\lambda\abs{\vec{\xi}}^{2}\le\mathbb{A}(x)\vec{\xi}\cdot\vec{\xi}\le\Lambda\abs{\vec{\xi}}^{2},\quad\forall x\in\overline{\Omega},\;\;\forall\vec{\xi}\in\bR^{d},\label{eq:2-ellipticity-ri}
\end{equation}
$\alpha$ is a nonnegative constant, $\mathbb{B}$ is a symmetric
nonnegative definite $d\times d$ constant matrix, and $\vec{r}:\Gamma\to\bR^{d}$
is a bounded vector field satisfying $\int_{\Gamma}\vec{r}\times\vec{n}\, dS=0$.

\begin{theorem} \label{thm-Pexistence} Assume the above conditions.
	Then the problem \eqref{eq:2-05ri} -- \eqref{eq:2.06ri} has a weak
	solution $u\in H^{1}(\Omega)$ if and only if $f$ and $g$ satisfy
	the compatibility condition 
	\begin{equation}
	\int_{\Omega}f\, dx+\int_{\partial\Omega}g\, dS=0.\label{eq:compatibility-ri}
	\end{equation}
	Moreover, such a weak solution $u$ is unique modulo an additive constant.
	In particular, one may assume that $\int_{\Omega}u=0$. \end{theorem}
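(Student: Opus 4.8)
The plan is to rewrite \eqref{eq:2-05ri}--\eqref{eq:2.06ri} as a single variational identity on $H^1(\Omega)$ and to solve it by Lax--Milgram on the closed subspace of mean-zero functions. To set this up I would test \eqref{eq:2-05ri} against $v\in H^1(\Omega)$ and integrate by parts; the boundary integral $\int_{\partial\Omega}(\mathbb{A}\nabla u\cdot\vec{n})\,v\,dS$ splits over $\Gamma'$ and $\Gamma$, and inserting the Neumann datum \eqref{eq:2.05ri} and the nonlocal datum \eqref{eq:2.06ri} and cancelling the $\vec{F}\cdot\vec{n}$ contributions yields the identity $B[u,v]=L[v]$ for all $v\in H^1(\Omega)$, where
\[
B[u,v]=\int_{\Omega}\mathbb{A}\nabla u\cdot\nabla v\,dx+\alpha\left(\int_{\Gamma}u\vec{n}\,dS\right)\cdot\left(\int_{\Gamma}v\vec{n}\,dS\right)+\left(\mathbb{B}\int_{\Gamma}u\,\vec{r}\times\vec{n}\,dS\right)\cdot\left(\int_{\Gamma}v\,\vec{r}\times\vec{n}\,dS\right)
\]
and $L[v]=\int_{\Omega}fv\,dx+\int_{\Omega}\vec{F}\cdot\nabla v\,dx+\int_{\partial\Omega}gv\,dS$. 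The one nontrivial manipulation is the scalar triple product identity $\left[(\mathbb{B}\vec{q})\times\vec{r}\right]\cdot\vec{n}=(\mathbb{B}\vec{q})\cdot(\vec{r}\times\vec{n})$, which recasts the angular boundary term as the symmetric, nonnegative contribution displayed above. Together with the symmetry of $\mathbb{A}$ and $\mathbb{B}$ this makes $B$ symmetric, and since $\alpha\ge0$ and $\mathbb{B}$ is nonnegative definite we get $B[v,v]\ge\int_{\Omega}\mathbb{A}\nabla v\cdot\nabla v\,dx\ge\lambda\norm{\nabla v}_{L^2(\Omega)}^2$.

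Next I would set $V:=\{v\in H^1(\Omega):\int_{\Omega}v\,dx=0\}$ and apply Lax--Milgram there. Boundedness of $B$ on $H^1(\Omega)\times H^1(\Omega)$ follows from Cauchy--Schwarz in the interior together with the trace inequality applied to the two linear functionals $v\mapsto\int_{\Gamma}v\vec{n}\,dS$ and $v\mapsto\int_{\Gamma}v\,\vec{r}\times\vec{n}\,dS$ (here using that $\vec{r}$ is bounded), and boundedness of $L$ is immediate from $f\in L^2(\Omega)$, $\vec{F}\in L^2(\Omega;\bR^d)$, and $g\in L^2(\partial\Omega)$. Coercivity on $V$ is where the structure pays off: by the Poincar\'e--Wirtinger inequality $\norm{v}_{L^2(\Omega)}\le C\norm{\nabla v}_{L^2(\Omega)}$ on $V$, the ellipticity bound alone gives $B[v,v]\ge\lambda\norm{\nabla v}_{L^2(\Omega)}^2\gtrsim\norm{v}_{H^1(\Omega)}^2$, the nonnegative boundary terms only helping. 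Lax--Milgram then produces a unique $u\in V$ with $B[u,v]=L[v]$ for every $v\in V$.

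It remains to promote this to a solution against all test functions and to extract the compatibility condition. The key point is that constants annihilate $B$ in either slot: for a constant $c$ one has $\int_{\Gamma}c\,\vec{r}\times\vec{n}\,dS=c\int_{\Gamma}\vec{r}\times\vec{n}\,dS=0$ by hypothesis and $\int_{\Gamma}c\vec{n}\,dS=c\int_{\Gamma}\vec{n}\,dS=0$ (the latter holding in the present application by Lemma~\ref{lemma-zero-nj}), so $B[c,v]=B[u,c]=0$. Writing a general $v\in H^1(\Omega)$ as $v=v_0+\bar v$ with $v_0\in V$ and $\bar v$ its mean, I get $B[u,v]=B[u,v_0]=L[v_0]$ while $L[v]=L[v_0]+\bar v\left(\int_{\Omega}f\,dx+\int_{\partial\Omega}g\,dS\right)$; thus $B[u,v]=L[v]$ for all $v\in H^1(\Omega)$ precisely when \eqref{eq:compatibility-ri} holds, and testing against $v\equiv1$ shows this condition is also necessary. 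For uniqueness, if $B[w,v]=0$ for all $v$ then $B[w,w]=0$ forces $\nabla w=0$ by \eqref{eq:2-ellipticity-ri}, so $w$ is constant, while every constant solves the homogeneous problem by the computation just made; hence the solution is unique up to an additive constant, and the normalization $\int_{\Omega}u=0$ selects the representative in $V$.

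The step I expect to be most delicate is precisely this last passage from $V$ to all of $H^1(\Omega)$ together with the matching necessity of \eqref{eq:compatibility-ri}: both rest on constants lying in the kernel of $B$, which requires $\int_{\Gamma}\vec{n}\,dS=0$ in addition to the stated $\int_{\Gamma}\vec{r}\times\vec{n}\,dS=0$. Were $\int_{\Gamma}\vec{n}\,dS\neq0$ and $\alpha>0$, the $\alpha$-term would render $B$ coercive on all of $H^1(\Omega)$, giving a unique solution for arbitrary data and collapsing both the compatibility constraint and the modulo-constant ambiguity; so this vanishing (valid here via Lemma~\ref{lemma-zero-nj}) is essential to the statement as phrased. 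The remaining ingredients---the trace and Poincar\'e inequalities and the boundedness estimates---are routine.
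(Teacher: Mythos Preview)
Your proposal is correct and follows essentially the same route as the paper: derive the weak identity $\mathscr{B}[u,v]=F(v)$, apply Lax--Milgram on the mean-zero subspace using Poincar\'e for coercivity and the trace theorem for boundedness, then extend to all of $H^1(\Omega)$ by splitting off the constant part and invoking $\mathscr{B}[u,1]=0$ via Lemma~\ref{lemma-zero-nj} together with $\int_\Gamma\vec{r}\times\vec{n}\,dS=0$. Your closing remark that the statement as phrased relies on $\int_\Gamma\vec{n}\,dS=0$ (supplied by Lemma~\ref{lemma-zero-nj} rather than listed among the explicit hypotheses) is exactly right and matches how the paper proceeds.
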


\begin{proof} Let us first formulate the problem \eqref{eq:2-05ri}
	-- \eqref{eq:2.06ri} in a weak setting. We multiply \eqref{eq:2-05ri}
	by a function $v\in H^{1}(\Omega)$ and integrate by parts on $\Omega$
	to obtain 
	\[
	\int_{\Omega}\mathbb{A}\nabla u\cdot\nabla v\, dx=\int_{\partial\Omega}(\mathbb{A}\nabla u\cdot\vec{n}-\vec{F}\cdot\vec{n})v\, dS+\int_{\Omega}fv\, dx+\int_{\Omega}\vec{F}\cdot\nabla v\, dx.
	\]
	The boundary conditions \eqref{eq:2.05ri}, \eqref{eq:2.06ri}, and
	the vector identity $\vec{a}\cdot(\vec{b}\times\vec{c})=(\vec{a}\times\vec{b})\cdot\vec{c}$,
	yield 
	\begin{multline*}
	\int_{\partial\Omega}(\mathbb{A}\nabla u\cdot\vec{n}-\vec{F}\cdot\vec{n})v\, dS=\int_{\partial\Omega}gv\, dS-\alpha\left(\int_{\Gamma}u\vec{n}\, dS\right)\cdot\left(\int_{\Gamma}v\vec{n}\, dS\right)\\
	-\left(\mathbb{B}\int_{\Gamma}u\vec{r}\times\vec{n}\, dS\right)\cdot\left(\int_{\Gamma}v\vec{r}\times\vec{n}\, dS\right).
	\end{multline*}
	Combining together, we have 
	\begin{equation}
	\mathscr{B}[u,v]=\int_{\Omega}fv\, dx+\int_{\Omega}\vec{F}\cdot\nabla v\, dx+\int_{\partial\Omega}gv\, dS.\label{eq:ws-ri}
	\end{equation}
	where we set 
	\begin{multline}
	\mathscr{B}[u,v]:=\int_{\Omega}\mathbb{A}\nabla u\cdot\nabla v\, dx+\alpha\left(\int_{\Gamma}u\vec{n}\, dS\right)\cdot\left(\int_{\Gamma}v\vec{n}\, dS\right)\\
	+\mathbb{B}\left(\int_{\Gamma}u\vec{r}\times\vec{n}\, dS\right)\cdot\left(\int_{\Gamma}v\vec{r}\times\vec{n}\, dS\right).\label{eq:BL}
	\end{multline}
	We shall therefore say that a function $u\in H^{1}(\Omega)$ is a
	weak solution of the problem \eqref{eq:2-05ri} -- \eqref{eq:2.06ri}
	if it satisfies the identity \eqref{eq:ws-ri} for all $v\in H^{1}(\Omega)$.
	
	By Lemma~\ref{lemma-zero-nj} and the assumption $\int_{\Gamma}\vec{r}\times\vec{n}\, dS=0$,
	we have $\mathscr{B}[u,1]=0$ for any $u\in H^{1}(\Omega)$, and thus
	any weak solution of the problem \eqref{eq:2-05ri} -- \eqref{eq:2.06ri}
	should satisfy the compatibility condition \eqref{eq:compatibility-ri}.
	
	Next, we apply the Lax-Milgram theorem to show the existence of a
	weak solution. By the trace theorem, it is clear that $\mathscr{B}[\cdot,\cdot]$
	is bounded; i.e., there is a constant $C$ such that 
	\[
	\abs{\mathscr{B}[u,v]}\le C\norm{u}_{H^{1}(\Omega)}\norm{v}_{H^{1}(\Omega)},\quad\forall u,\, v\in H^{1}(\Omega).
	\]
	However, $\mathscr{B}[\cdot,\cdot]$ is not coercive on $H^{1}$.
	To remedy this, let us introduce a subspace $H_{*}^{1}(\Omega)$ of
	$H^{1}(\Omega)$ defined by 
	\[
	H_{*}^{1}(\Omega):=\{u\in H^{1}(\Omega):\int_{\Omega}u=0\}.
	\]
	Clearly, $H_{*}^{1}(\Omega)$ is a closed subspace of $H^{1}(\Omega)$
	and thus, $H_{*}^{1}(\Omega)$ itself is a Hilbert space. Let us recall
	that Poincar\'e's inequality: 
	\[
	\norm{u-(u)_{\Omega}}_{L^{2}(\Omega)}\le C\norm{\nabla u}_{L^{2}(\Omega)};\quad(u)_{\Omega}=\text{the average of \ensuremath{u} over \ensuremath{\Omega}}.
	\]
	When restricted to $H_{*}^{1}(\Omega)$, Poincar\'e's inequality and
	the assumption that $\alpha\ge0$ and $\mathbb{B}\ge0$ imply that
	$\mathscr{B}[\cdot,\cdot]$ is coercive on $H_{*}^{1}(\Omega)$; i.e.,
	there is a constant $c>0$ such that 
	\begin{equation}
	\mathscr{B}[u,u]\ge c\norm{u}_{H^{1}(\Omega)}^{2},\quad\forall u\in H_{*}^{1}(\Omega).\label{eq-B-bounded-below}
	\end{equation}
	As a matter of fact, for any $u\in H^{1}(\Omega)$, we have 
	\begin{multline}
	\mathscr{B}[u,u]=\int_{\Omega}\mathbb{A}\nabla u\cdot\nabla u\, dx+\alpha\Abs{\int_{\Gamma}u\vec{n}\, dS}^{2}\\
	+\mathbb{B}\left(\int_{\Gamma}u\vec{r}\times\vec{n}\, dS\right)\cdot\left(\int_{\Gamma}u\vec{r}\times\vec{n}\, dS\right)\ge\lambda\int_{\Omega}\abs{\nabla u}^{2}\, dx,\label{eq:02.11ri}
	\end{multline}
	and Poincar\'e's inequality yields \eqref{eq-B-bounded-below}. Also,
	the trace theorem verifies that the linear functional $F$ on $H^{1}(\Omega)$
	given by 
	\begin{equation}
	F(v):=\int_{\Omega}fv\, dx+\int_{\Omega}\vec{F}\cdot\nabla v\, dx+\int_{\partial\Omega}gv\, dS\label{eq:LF}
	\end{equation}
	is bounded, and thus it is a bounded linear functional on $H_{*}^{1}(\Omega)$
	as well.
	
	Therefore, the Lax-Milgram theorem implies that there exists a unique
	$u\in H_{*}^{1}(\Omega)$ such that 
	\[
	\mathscr{B}[u,v]=F(v)=\int_{\Omega}fv\, dx+\int_{\Omega}\vec{F}\cdot\nabla v\, dx+\int_{\partial\Omega}gv\, dS,\quad\forall v\in H_{*}^{1}(\Omega).
	\]
	We now show that $u$ satisfies the identity \eqref{eq:ws-ri} for
	any $v\in H^{1}(\Omega)$ so that $u$ is indeed a weak solution of
	the problem \eqref{eq:2-05ri} -- \eqref{eq:2.06ri}. Note that any
	$v\in H^{1}(\Omega)$ can be written as $v=\tilde{v}+c$, where $\tilde{v}\in H_{*}^{1}(\Omega)$
	and $c$ is a constant. Since $\mathscr{B}[u,c]=0$ by Lemma~\ref{lemma-zero-nj}
	and the assumption $\int_{\Gamma}\vec{r}\times\vec{n}\, dS=0$, and
	$F(c)=0$ by the compatibility condition \eqref{eq:compatibility-ri},
	the identity \eqref{eq:ws-ri} holds for $u$. Finally, we show that
	a weak solution $u$ is unique modulo an additive constant. Thanks
	to linearity, it is enough to show that any $u\in H^{1}(\Omega)$
	satisfying the identity 
	\[
	\mathscr{B}[u,v]=0,\quad\forall v\in H^{1}(\Omega)
	\]
	must be a constant. By taking $v=u$ in the above, we get by \eqref{eq:02.11ri}
	that 
	\[
	0=\mathscr{B}[u,u]\ge\lambda\int_{\Omega}\abs{\nabla u}^{2}\, dx,
	\]
	which obviously implies that $u$ is a constant. \end{proof}

\begin{remark} In fact, we can consider the case when $\Omega\subset\bR^{3}$
	is an (unbounded) exterior domain and $\Gamma'=\emptyset$. In this
	case, we also get a similar result except that we do not need to impose
	the compatibility condition. Here, we briefly describe the proof.
	Instead of working in $H^{1}(\Omega)=W^{1,2}(\Omega)$, we use a different
	function space $Y^{1,2}(\Omega)$, which is defined as the family
	of all weakly differentiable functions $u\in L^{6}(\Omega)$, whose
	weak derivatives are functions in $L^{2}(\Omega)$. The space $Y^{1,2}(\Omega)$
	is endowed with the norm 
	\[
	\norm{u}_{Y^{1,2}(\Omega)}:=\norm{u}_{L^{6}(\Omega)}+\norm{\nabla u}_{L^{2}(\Omega)}.
	\]
	It is known that if $\Omega=\bR^{3}\setminus\overline{D}$ and $D$
	is a bounded Lipschitz domain, then we have the following Sobolev
	inequality (\cite{Chen:Williams:Zhao}): 
	\begin{equation}
	\norm{u}_{L^{6}(\Omega)}\le C\norm{\nabla u}_{L^{2}(\Omega)},\quad\forall u\in Y^{1,2}(\Omega).\label{eq:ysob}
	\end{equation}
	Therefore, in this case, $Y^{1,2}(\Omega)$ becomes a Hilbert space
	under the inner product 
	\[
	\ip{u,v}_{Y^{1,2}(\Omega)}=\int_{\Omega}\nabla u\cdot\nabla v\, dx.
	\]
	Also, we have the trace inequality 
	\[
	\norm{u}_{L^{2}(\partial\Omega)}\le C\norm{\nabla u}_{L^{2}(\Omega)},\quad\forall u\in Y^{1,2}(\Omega).
	\]
	To see this, assume $\overline{D}\subset B(x_{0},r)$ and fix $\eta\in C_{c}^{\infty}(B(x_{0},2r))$
	be such that $0\le\eta\le1$, $\eta=1$ on $B(x_{0},r)$, and $\abs{\nabla\eta}\le2/r$;
	one may take $r=\diam D$. Denote $\Omega_{r}=\Omega\cap B(x_{0},r)$.
	The usual trace inequality implies 
	\[
	\norm{\eta u}_{L^{2}(\partial\Omega_{2r})}\le C\norm{\eta u}_{W^{1,2}(\Omega_{2r})}.
	\]
	Note that $\norm{\eta u}_{L^{2}(\partial\Omega_{2r})}=\norm{u}_{L^{2}(\partial\Omega)}$
	and $\norm{\eta u}_{W^{1,2}(\Omega_{2r})}\le C\norm{\nabla u}_{L^{2}(\Omega)}$
	by the Sobolev inequality \eqref{eq:ysob} and H\"older's inequality.
	More precisely, we have 
	\[
	\int_{\Omega_{2r}}\abs{\eta u}^{2}\, dx\le Cr^{2}\left(\int_{\Omega_{2r}}\abs{u}^{6}\, dx\right)^{\frac{1}{3}}\le Cr^{2}\left(\int_{\Omega}\abs{u}^{6}\, dx\right)^{\frac{1}{3}}\le Cr^{2}\int_{\Omega}\abs{\nabla u}^{2}\, dx
	\]
	and similarly we get 
	\begin{align*}
	\int_{\Omega_{2r}}\abs{\nabla(\eta u)}^{2}\, dx & \le2\int_{\Omega_{2r}}\eta^{2}\abs{\nabla u}^{2}\, dx+2\int_{\Omega_{2r}}\abs{\nabla\eta}^{2}\abs{u}^{2}\, dx\\
	& \le2\int_{\Omega_{2r}}\abs{\nabla u}^{2}\, dx+C\int_{\Omega}\abs{\nabla u}^{2}\, dx\le C\int_{\Omega}\abs{\nabla u}^{2}\, dx.
	\end{align*}
	Therefore, the bilinear form \eqref{eq:BL} is bounded and coercive
	on the Hilbert space $Y^{1,2}(\Omega)$. Also, if $\vec{F}\in L^{2}(\Omega)$,
	$f\in L^{6/5}(\Omega)$, and $g\in L^{2}(\partial\Omega)$, then the
	linear functional $F(\cdot)$ in \eqref{eq:LF} is bounded on $Y^{1,2}(\Omega)$,
	and thus Lax-Milgram theorem implies the existence of a weak solution.
\end{remark}

\begin{remark} We may also replace the boundary condition on $\Gamma'$
	to non-slip condition $\vec{U}=0$. \end{remark}

\subsection{Regularity}
In this section, we study regularity of a weak solution
$u\in H^{1}(\Omega)$ of the problem \eqref{eq:2-05ri} -- \eqref{eq:2.06ri}.
As in the previous section, we assume that $\mathbb{A}$ satisfies
the uniform ellipticity condition \eqref{eq:2-ellipticity-ri}, $\alpha\ge0$,
$\mathbb{B}\ge0$, and $\int_{\Gamma}\vec{r}\times\vec{n}\, dS=0$.

Suppose $f$ and $g$ satisfy the compatibility condition \eqref{eq:compatibility-ri}
and let $u\in H^{1}(\Omega)$ be a weak solution of the problem \eqref{eq:2-05ri}
-- \eqref{eq:2.06ri}. By Theorem~\ref{thm-Pexistence}, such a weak
solution is unique up to an additive constant, and thus by Lemma~\ref{lemma-zero-nj}
and the assumption $\int_{\Gamma}\vec{r}\times\vec{n}\, dS=0$, we
see that the (constant) vectors 
\[
\vec{a}:=\int_{\Gamma}u\vec{n}\, dS\quad\text{and}\quad\vec{b}:=\int_{\Gamma}u\vec{r}\times\vec{n}\, dS,
\]
are uniquely determined (independent of an additive constant). Therefore,
a weak solution $u\in H^{1}(\Omega)$ of the problem \eqref{eq:2-05ri}
-- \eqref{eq:2.06ri} is also a weak solution of 
\begin{equation}
\left\{ \begin{aligned}-\nabla\cdot(\mathbb{A}\nabla u) & =f-\nabla\cdot\vec{F}\;\text{ in }\;\Omega,\\
\mathbb{A}\nabla u\cdot\vec{n} & =\vec{F}\cdot\vec{n}+g\;\text{ on }\;\Gamma',\\
\mathbb{A}\nabla u\cdot\vec{n} & =\vec{F}\cdot\vec{n}+g-h\;\text{ on }\;\Gamma,
\end{aligned}
\right.\label{eq3.01ri}
\end{equation}
where we set 
\begin{equation}
h:=(\alpha\vec{a}+\mathbb{B}\vec{b}\times\vec{r})\cdot\vec{n}=\left[\alpha\int_{\partial\Omega}u\vec{n}\, dS+\left(\mathbb{B}\int_{\partial\Omega}u\vec{r}\times\vec{n}\, dS\right)\times\vec{r}\right]\cdot\vec{n}.\label{eq3.02ri}
\end{equation}
It is easy to check that 
\[
h\in L^{\infty}(\partial\Omega),\quad\int_{\partial\Omega}h\, dS=0,\quad\text{and \ensuremath{h} is as regular as \ensuremath{\vec{r}\times\vec{n}}}.
\]
In particular, if $\vec{r}\in C^{k}(\Gamma;\,\bR^{d})$ and $\Gamma$
is of class $C^{k+1}$, then $h\in C^{k}(\Gamma)$, etc. Therefore,
we have the following results from the standard elliptic regularity
theory and we omit the proofs because they are straightforwardly derived;
see, e.g., \cite{GT,Evans}.

\begin{theorem}[Interior $H^k$-regularity]\label{theorem-interior-regularity}
	Assume $\mathbb{A}\in C^{k+1}(\Omega)$, $f\in H^{k}(\Omega)$, and
	$\vec{F}\in H^{k+1}(\Omega)$, where $k\ge0$ is an integer. If $u\in H^{1}(\Omega)$
	is a weak solution of 
	\[
	-\nabla\cdot(\mathbb{A}\nabla u)=f-\nabla\cdot\vec{F}\;\text{ in }\;\Omega,
	\]
	then $u\in H_{loc}^{k+2}(\Omega)$ and for $\Omega'\subset\subset\Omega$,
	we have the estimate 
	\[
	\norm{u}_{H^{k+2}(\Omega')}\le C\left(\norm{f}_{H^{k}(\Omega)}+\norm{\vec{F}}_{H^{k+1}(\Omega)}+\norm{u}_{L^{2}(\Omega)}\right).
	\]
	where the constant $C$ depends only on $k,\Omega,\Omega'$, and $\mathbb{A}$.
\end{theorem}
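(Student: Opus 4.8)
The plan is to establish this via Nirenberg's method of difference quotients for the base case $k=0$, followed by an induction on $k$ obtained by differentiating the equation. Since the hypotheses concern only the interior equation, I would first note that for any $v\in H^{1}(\Omega)$ with compact support in $\Omega$ the weak identity \eqref{eq:ws-ri} reduces to
\[
\int_{\Omega}\mathbb{A}\nabla u\cdot\nabla v\,dx=\int_{\Omega}fv\,dx+\int_{\Omega}\vec{F}\cdot\nabla v\,dx,
\]
because $\int_{\Gamma}v\vec{n}\,dS=\int_{\Gamma}v\vec{r}\times\vec{n}\,dS=0$ and $\int_{\partial\Omega}gv\,dS=0$ for such $v$; thus the boundary data and the nonlocal terms in \eqref{eq:BL} play no role, and what remains is the classical interior estimate for a divergence-form equation.

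For the base case I would fix $\Omega'\subset\subset\Omega''\subset\subset\Omega$ and a cutoff $\eta\in C_{c}^{\infty}(\Omega'')$ with $\eta\equiv1$ on $\Omega'$. For each coordinate direction $\ell$ and small $h$, insert the test function $v=-D_{h}^{-\ell}(\eta^{2}D_{h}^{\ell}u)$, where $D_{h}^{\ell}$ denotes the difference quotient in the direction $e_{\ell}$, into the identity above. Using discrete integration by parts and the discrete Leibniz rule $D_{h}^{\ell}(\mathbb{A}\nabla u)=\mathbb{A}(\cdot+he_{\ell})D_{h}^{\ell}\nabla u+(D_{h}^{\ell}\mathbb{A})\nabla u$, the principal term is bounded below by $\lambda\int\eta^{2}\abs{D_{h}^{\ell}\nabla u}^{2}\,dx$ thanks to the ellipticity \eqref{eq:2-ellipticity-ri}, while the remaining terms---after Cauchy--Schwarz and Young's inequality---are controlled by $\norm{f}_{L^{2}}$, $\norm{\vec{F}}_{H^{1}}$, and $\norm{\nabla u}_{L^{2}(\Omega'')}$, with a small multiple of $\eta^{2}\abs{D_{h}^{\ell}\nabla u}^{2}$ absorbed into the left-hand side. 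Here the hypothesis $\mathbb{A}\in C^{1}$ guarantees that $D_{h}^{\ell}\mathbb{A}$ is bounded uniformly in $h$. This yields an $h$-uniform bound on $\int_{\Omega'}\abs{D_{h}^{\ell}\nabla u}^{2}\,dx$, and the standard difference-quotient lemma then gives $\partial_{\ell}\nabla u\in L^{2}(\Omega')$ for every $\ell$, i.e.\ $u\in H_{loc}^{2}(\Omega)$. The factor $\norm{\nabla u}_{L^{2}(\Omega'')}$ is in turn reduced to $\norm{u}_{L^{2}(\Omega)}$ plus the data by a preliminary Caccioppoli estimate (testing with $v=\eta^{2}u$), giving the stated bound for $k=0$.

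For the inductive step, assume the theorem with $k$ replaced by $k-1$ and suppose $\mathbb{A}\in C^{k+1}$, $f\in H^{k}$, and $\vec{F}\in H^{k+1}$. The inductive hypothesis gives $u\in H_{loc}^{k+1}(\Omega)$, which justifies differentiating the equation: for each direction $s$, the function $w:=\partial_{s}u$ is a weak solution of
\[
-\nabla\cdot(\mathbb{A}\nabla w)=\tilde{f}-\nabla\cdot\tilde{\vec{F}},\qquad\tilde{f}:=\partial_{s}f,\quad\tilde{\vec{F}}:=\partial_{s}\vec{F}-(\partial_{s}\mathbb{A})\nabla u,
\]
with $\tilde{f}\in H_{loc}^{k-1}$ and $\tilde{\vec{F}}\in H_{loc}^{k}$ (using $\partial_{s}\mathbb{A}\in C^{k}$ and $\nabla u\in H_{loc}^{k}$). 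Applying the inductive hypothesis to $w$ on slightly shrunken subdomains gives $w\in H_{loc}^{k+1}$; since $s$ is arbitrary, $u\in H_{loc}^{k+2}(\Omega)$, and chaining the estimates through the nested subdomains produces the asserted inequality.

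The only genuinely delicate step is the base case: producing the $h$-uniform bound and correctly absorbing the error terms generated by the cutoff and by $D_{h}^{\ell}\mathbb{A}$, which is exactly where the $C^{1}$ regularity of $\mathbb{A}$ enters. Once $H_{loc}^{2}$ is secured, the induction is routine---it merely differentiates the equation and feeds lower-order derivatives into the right-hand side---so, as the paper notes, one may defer the technical bookkeeping to the standard references \cite{GT,Evans}.
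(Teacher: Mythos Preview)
Your outline is correct and is precisely the standard difference-quotient/induction argument from \cite{GT,Evans}; the paper itself gives no proof of this theorem, stating that it is ``straightforwardly derived'' from classical elliptic regularity theory and citing exactly those references. So your approach coincides with what the paper intends.
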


\begin{theorem}[Interior $C^{k+2,\alpha}$-regularity] Assume $\mathbb{A}\in C_{loc}^{k+1,\alpha}(\Omega)$,
	$f\in C_{loc}^{k,\alpha}(\Omega)$, and $\vec{F}\in C_{loc}^{k+1,\alpha}(\Omega)$,
	where $k\ge0$ is an integer. If $u\in H^{1}(\Omega)$ is a weak solution
	of 
	\[
	-\nabla\cdot(\mathbb{A}\nabla u)=f-\nabla\cdot\vec{F}\;\text{ in }\;\Omega,
	\]
	then $u\in C_{loc}^{k+2,\alpha}(\Omega)$ and for $\Omega''\subset\subset\Omega'\subset\subset\Omega$,
	we have the estimate 
	\[
	\norm{u}_{C^{k+2,\alpha}(\Omega'')}\le C\left(\norm{f}_{C^{k,\alpha}(\Omega')}+\norm{\vec{F}}_{C^{k+1,\alpha}(\Omega')}+\norm{u}_{L^{2}(\Omega)}\right).
	\]
	where the constant $C$ depends only on $k,\Omega,\Omega'$, and $\mathbb{A}$.
\end{theorem}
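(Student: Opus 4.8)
The first thing to record is that this is a purely \emph{interior} statement: since $\Omega''\subset\subset\Omega'\subset\subset\Omega$, neither the nonlocal Robin term $h$ nor the boundary conditions on $\Gamma,\Gamma'$ ever enter. We use only that $u\in H^1(\Omega)$ is a weak solution of the divergence-form equation $-\nabla\cdot(\mathbb{A}\nabla u)=f-\nabla\cdot\vec{F}$ inside $\Omega$. Thus the assertion is exactly the classical interior Schauder estimate, and the plan is to reduce it to the standard non-divergence-form theorem after an initial $C^{1,\alpha}$ bootstrap, and then to induct on $k$.

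\emph{Base case $k=0$.} First I would invoke the divergence-form interior gradient Hölder estimate (\cite{GT}, Chapter~8, via the Campanato/freezing perturbation argument): since $\mathbb{A}\in C^{1,\alpha}\subset C^{0,\alpha}$, $\vec{F}\in C^{1,\alpha}\subset C^{0,\alpha}$, and $f\in C^{0,\alpha}\subset L^{\infty}_{loc}$, the weak solution satisfies $u\in C^{1,\alpha}_{loc}(\Omega)$, with an interior bound for $\norm{u}_{C^{1,\alpha}(\Omega_{1})}$ by $\norm{f}_{C^{0,\alpha}(\Omega')}+\norm{\vec{F}}_{C^{1,\alpha}(\Omega')}+\norm{u}_{L^{2}(\Omega)}$ on an intermediate $\Omega''\subset\subset\Omega_{1}\subset\subset\Omega'$. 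Because $\mathbb{A}\in C^{1}$, I then expand the divergence and rewrite the equation in non-divergence form, $a_{ij}\partial_{ij}u=-(\partial_{i}a_{ij})\partial_{j}u-f+\partial_{i}F_{i}=:\tilde{f}$. The right side lies in $C^{0,\alpha}_{loc}$: indeed $\partial_{i}a_{ij}\in C^{0,\alpha}$ and $\partial_{j}u\in C^{0,\alpha}$ by the previous step, $f\in C^{0,\alpha}$, and $\partial_{i}F_{i}\in C^{0,\alpha}$ since $\vec{F}\in C^{1,\alpha}$. The classical non-divergence interior Schauder estimate with $C^{0,\alpha}$ coefficients $a_{ij}$ (\cite{GT}, Chapter~6; \cite{Evans}) then yields $u\in C^{2,\alpha}_{loc}(\Omega)$ together with the stated estimate for $k=0$.

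\emph{Inductive step.} Assuming the theorem at order $k-1$, I would differentiate the equation. For each $\ell$, $v:=\partial_{\ell}u$ is (in the interior) an $H^{1}$ weak solution of $-\nabla\cdot(\mathbb{A}\nabla v)=\tilde{f}-\nabla\cdot\tilde{\vec{F}}$, where $\tilde{f}=\partial_{\ell}f$ and $\tilde{F}_{i}=\partial_{\ell}F_{i}-\partial_{\ell}a_{ij}\,\partial_{j}u$. Under $\mathbb{A}\in C^{k+1,\alpha}$, $f\in C^{k,\alpha}$, $\vec{F}\in C^{k+1,\alpha}$, the induction hypothesis already gives $u\in C^{k+1,\alpha}_{loc}$, so $\partial_{j}u\in C^{k,\alpha}_{loc}$; hence $\tilde{f}\in C^{k-1,\alpha}_{loc}$ and $\tilde{\vec{F}}\in C^{k,\alpha}_{loc}$ (the term $\partial_{\ell}a_{ij}\,\partial_{j}u$ being a product of $C^{k,\alpha}$ functions), while $\mathbb{A}\in C^{k,\alpha}$ already suffices for the order-$(k-1)$ statement. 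Applying that statement to $v$ on slightly shrunken domains gives $v\in C^{k+1,\alpha}_{loc}$ for every $\ell$, that is $u\in C^{k+2,\alpha}_{loc}(\Omega)$; chaining the estimates through the finitely many nested subdomains inserted between $\Omega''$ and $\Omega'$ produces the asserted bound with a constant depending only on $k,\Omega,\Omega'$, and $\mathbb{A}$.

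\emph{Main obstacle.} The only genuine work sits in the base case, specifically the passage from the $H^{1}$ weak solution to $C^{1,\alpha}$: here one must use the divergence-structure (Campanato-space) Schauder machinery rather than a naive freezing argument, since the datum $\nabla\cdot\vec{F}$ is a priori only a distribution. Once $\nabla u\in C^{0,\alpha}_{loc}$ is in hand, converting to non-divergence form (licensed by $\mathbb{A}\in C^{1}$) and running the induction are routine bookkeeping of nested subdomains and of the accumulation of constants. This is precisely why the authors record the result as a direct consequence of standard elliptic theory and refer to \cite{GT,Evans}.
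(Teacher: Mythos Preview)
Your proposal is correct and is exactly the standard interior Schauder argument that the paper defers to: the paper omits the proof entirely, stating only that the result is ``straightforwardly derived'' from standard elliptic regularity theory with a reference to \cite{GT,Evans}. Your sketch (divergence-form $C^{1,\alpha}$ bootstrap, passage to non-divergence form, then induction via differentiation) is precisely that standard derivation, so there is nothing to compare.
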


\begin{theorem}[Global $H^k$-regularity]\label{theorem-boundary-requality-higher}
	Assume $\partial\Omega$ is $C^{k+2}$, $\mathbb{A}\in C^{k+1}(\overline{\Omega})$,
	$f\in H^{k}(\Omega)$, $\vec{F}\in H^{k+1}(\Omega)$, $g\in H^{k+\frac{1}{2}}(\partial\Omega)$,
	and $\vec{r}\in H^{k+\frac{1}{2}}(\Gamma)$, where $k\ge0$ is an
	integer. If $u\in H^{1}(\Omega)$ is a weak solution of the problem
	\eqref{eq:2-05ri} -- \eqref{eq:2.06ri}, then $u\in H^{k+2}(\Omega)$.
	In particular, if we fix $u$ so that $\int_{\Omega}u\, dx=0$, then
	we have the estimate 
	\[
	\norm{u}_{H^{k+2}(\Omega)}\le C\left(\norm{f}_{H^{k}(\Omega)}+\norm{\vec{F}}_{H^{k+1}(\Omega)}+\norm{g}_{H^{k+\frac{1}{2}}(\partial\Omega)}+\norm{\vec{r}}_{H^{k+\frac{1}{2}}(\partial\Omega)}\right),
	\]
	where the constant $C$ depends only on $\Omega$ and the coefficients
	$\mathbb{A}$, $\alpha$, $\mathbb{B}$. \end{theorem}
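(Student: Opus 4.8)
The plan is to exploit the reduction, carried out just before the statement, of the nonlocal problem \eqref{eq:2-05ri}--\eqref{eq:2.06ri} to the purely local conormal (Neumann) problem \eqref{eq3.01ri}, in which the nonlocality has been absorbed into the single known boundary function $h$ of \eqref{eq3.02ri}. The crucial feature of this reduction is that $h$ carries no derivatives of $u$: by Theorem~\ref{thm-Pexistence} the constant vectors $\vec{a}=\int_{\Gamma}u\vec{n}\,dS$ and $\vec{b}=\int_{\Gamma}u\vec{r}\times\vec{n}\,dS$ are uniquely determined numbers, so $h=(\alpha\vec{a}+\mathbb{B}\vec{b}\times\vec{r})\cdot\vec{n}$ is merely a fixed field on $\Gamma$. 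Consequently no bootstrapping in the order of regularity is needed to treat the nonlocal terms; once the regularity of $h$ is known, a single application of the standard theory finishes the argument. I would therefore (i) record the regularity of $h$, (ii) establish an a priori $H^{1}$ bound that controls $\abs{\vec{a}}$ and $\abs{\vec{b}}$ --- hence $h$ --- by the data, and (iii) quote the global regularity estimate for the conormal problem.

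For step (i): since $\partial\Omega$ is $C^{k+2}$, the outward unit normal $\vec{n}$ lies in $C^{k+1}(\partial\Omega)$. Because $\vec{a},\vec{b}$ are constant and $\vec{r}\in H^{k+1/2}(\Gamma)$, the field $\alpha\vec{a}+\mathbb{B}\vec{b}\times\vec{r}$ lies in $H^{k+1/2}(\Gamma)$, and pointwise multiplication by the $C^{k+1}$ function $\vec{n}$ (legitimate since $k+1\ge k+\tfrac12$) keeps it there; thus $h\in H^{k+1/2}(\Gamma)$ with
\[
\norm{h}_{H^{k+1/2}(\Gamma)}\le C\,(\abs{\vec{a}}+\abs{\vec{b}})\,(1+\norm{\vec{r}}_{H^{k+1/2}(\Gamma)}).
\]
As $\Gamma$ and $\Gamma'$ are disjoint, hence separate unions of boundary components, extending $h$ by zero across $\Gamma'$ produces $\tilde{g}:=g-h\chi_{\Gamma}\in H^{k+1/2}(\partial\Omega)$, and $u$ solves the conormal problem \eqref{eq3.01ri} with Neumann datum $\tilde{g}$.

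For step (ii): taking $v=u$ in the weak formulation \eqref{eq:ws-ri} and invoking the coercivity computation \eqref{eq:02.11ri} together with Poincar\'e's inequality (recall we normalize $\int_{\Omega}u=0$), I would obtain
\[
\norm{u}_{H^{1}(\Omega)}\le C\big(\norm{f}_{L^{2}(\Omega)}+\norm{\vec{F}}_{L^{2}(\Omega)}+\norm{g}_{L^{2}(\partial\Omega)}\big).
\]
The trace theorem then gives $\abs{\vec{a}}+\abs{\vec{b}}\le C\norm{u}_{L^{2}(\Gamma)}(1+\norm{\vec{r}}_{L^{2}(\Gamma)})\le C\norm{u}_{H^{1}(\Omega)}$, so that $\norm{h}_{H^{k+1/2}(\Gamma)}$ --- and $\norm{u}_{L^{2}(\Omega)}$ --- are bounded purely in terms of the data, with constants depending only on $\Omega,\mathbb{A},\alpha,\mathbb{B}$.

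For step (iii): with $\tilde{g}\in H^{k+1/2}(\partial\Omega)$ in hand, \eqref{eq3.01ri} is a standard divergence-form conormal boundary value problem, and the hypotheses $\partial\Omega\in C^{k+2}$, $\mathbb{A}\in C^{k+1}(\overline{\Omega})$, $f\in H^{k}$, $\vec{F}\in H^{k+1}$ place it within the classical global $H^{k+2}$ theory (\cite{GT,Evans}), yielding $u\in H^{k+2}(\Omega)$ and
\[
\norm{u}_{H^{k+2}(\Omega)}\le C\big(\norm{f}_{H^{k}}+\norm{\vec{F}}_{H^{k+1}}+\norm{\tilde{g}}_{H^{k+1/2}(\partial\Omega)}+\norm{u}_{L^{2}(\Omega)}\big).
\]
Substituting the bounds on $\norm{\tilde{g}}$ and $\norm{u}_{L^{2}}$ from step (ii) and absorbing constants yields the stated estimate. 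I expect the only genuinely delicate point to be step (ii): one must close the loop so that $h$ (through $\vec{a},\vec{b}$) is controlled by the data alone, ensuring the final constant is independent of $u$. The disjointness of $\Gamma$ and $\Gamma'$ is what guarantees the absence of mixed boundary-condition corners, so the cited global Neumann regularity applies verbatim near each component.
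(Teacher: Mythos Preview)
Your proposal is correct and follows precisely the route the paper intends: the paper explicitly omits the proof, stating that once the nonlocal problem is reduced to the local conormal problem \eqref{eq3.01ri} with the fixed boundary datum $h$ of \eqref{eq3.02ri}, the result is ``straightforwardly derived'' from standard elliptic regularity theory \cite{GT,Evans}. Your steps (i)--(iii) spell out exactly this derivation, including the a~priori $H^{1}$ bound needed to control $\abs{\vec{a}}+\abs{\vec{b}}$ (and hence $h$) by the data alone, which is the only point requiring any care.
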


\begin{theorem}[Global $C^{k+2, \alpha}$-regularity]\label{theorem-boundary-regularity-continous} Let $k\ge0$
	be an integer and $\alpha\in(0,1)$. Assume $\partial\Omega$ is $C^{k+2,\alpha}$,
	$\mathbb{A}\in C^{k+1,\alpha}(\Omega)$, $f\in C^{k,\alpha}(\Omega)$,
	$\vec{F}\in C^{k+1,\alpha}(\Omega)$, $g\in C^{k+1,\alpha}(\partial\Omega)$,
	and $\vec{r}\in C^{k+1,\alpha}(\Gamma)$. If $u\in H^{1}(\Omega)$
	be a weak solution of the problem \eqref{eq:2-05ri} -- \eqref{eq:2.06ri},
	then $u\in C^{k+2,\alpha}(\Omega)$. In particular, if we fix $u$
	so that $\int_{\Omega}u\, dx=0$, then we have the estimate 
	\[
	\norm{u}_{C^{k+2,\alpha}(\Omega)}\le C\left(\norm{f}_{C^{k,\alpha}(\Omega)}+\norm{\vec{F}}_{C^{k+1,\alpha}(\Omega)}+\norm{g}_{C^{k+1,\alpha}(\partial\Omega)}+\norm{\vec{r}}_{C^{k+1,\alpha}(\partial\Omega)}\right),
	\]
	where the constant $C$ depends only on $\Omega$ and the coefficients
	$\mathbb{A}$, $\alpha$, $\mathbb{B}$. \end{theorem}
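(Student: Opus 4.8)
The plan is to build on the reduction already carried out above: the weak solution $u$ of the nonlocal problem \eqref{eq:2-05ri}--\eqref{eq:2.06ri} is also a weak solution of the purely local conormal problem \eqref{eq3.01ri}, in which the nonlocal coupling has been absorbed into the boundary datum $h$ of \eqref{eq3.02ri}. The decisive structural point is that, by Theorem~\ref{thm-Pexistence} together with Lemma~\ref{lemma-zero-nj}, the vectors $\vec{a}=\int_{\Gamma}u\vec{n}\,dS$ and $\vec{b}=\int_{\Gamma}u\vec{r}\times\vec{n}\,dS$ are determined a priori (independently of the additive constant), so that $h$ is a genuinely fixed function rather than an unknown still coupled to $u$. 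This breaks the apparent circularity and lets us regard \eqref{eq3.01ri} as an ordinary conormal boundary value problem with prescribed data. Since $\Gamma$ is of class $C^{k+2,\alpha}$, its unit normal $\vec{n}$ lies in $C^{k+1,\alpha}(\Gamma)$, and together with $\vec{r}\in C^{k+1,\alpha}(\Gamma)$ and the fact that $\vec{a}$ and $\mathbb{B}\vec{b}$ are constant vectors, formula \eqref{eq3.02ri} shows $h\in C^{k+1,\alpha}(\Gamma)$ with $\norm{h}_{C^{k+1,\alpha}(\Gamma)}\le C\bigl(\abs{\vec{a}}+\abs{\vec{b}}\bigr)\bigl(1+\norm{\vec{r}}_{C^{k+1,\alpha}(\Gamma)}\bigr)$.

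Consequently the full conormal datum, namely $\vec{F}\cdot\vec{n}+g$ on $\Gamma'$ and $\vec{F}\cdot\vec{n}+g-h$ on $\Gamma$, belongs to $C^{k+1,\alpha}(\partial\Omega)$ because $\vec{F}\in C^{k+1,\alpha}$, $\vec{n}\in C^{k+1,\alpha}$, and $g\in C^{k+1,\alpha}$. Rewriting $f-\nabla\cdot\vec{F}$ as a single right-hand side in $C^{k,\alpha}(\Omega)$ (using $\vec{F}\in C^{k+1,\alpha}$) and expanding $\nabla\cdot(\mathbb{A}\nabla u)$ into non-divergence form with coefficients controlled by $\mathbb{A}\in C^{k+1,\alpha}(\overline{\Omega})$, I would then invoke the standard global Schauder theory for the conormal (oblique-derivative) problem; see \cite{GT,Evans}. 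Because $\Gamma$ and $\Gamma'$ are disjoint and hence lie at positive distance, there are no corners or interface junctions and the boundary condition is of pure conormal type on each component, so the estimate applies verbatim and yields $u\in C^{k+2,\alpha}(\overline{\Omega})$ together with
\[
\norm{u}_{C^{k+2,\alpha}(\Omega)}\le C\Big(\norm{f}_{C^{k,\alpha}(\Omega)}+\norm{\vec{F}}_{C^{k+1,\alpha}(\Omega)}+\norm{g}_{C^{k+1,\alpha}(\partial\Omega)}+\norm{h}_{C^{k+1,\alpha}(\Gamma)}+\norm{u}_{L^{2}(\Omega)}\Big).
\]

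The remaining step is to eliminate the two ``lower-order'' contributions $\norm{h}_{C^{k+1,\alpha}(\Gamma)}$ and $\norm{u}_{L^{2}(\Omega)}$ in favor of the data. Coercivity of $\mathscr{B}$ on $H_{*}^{1}(\Omega)$, established in \eqref{eq-B-bounded-below}--\eqref{eq:02.11ri}, together with Poincar\'e's inequality under the normalization $\int_{\Omega}u=0$ and the boundedness of the functional \eqref{eq:LF}, gives the energy bound $\norm{u}_{H^{1}(\Omega)}\le C\bigl(\norm{f}_{L^{2}}+\norm{\vec{F}}_{L^{2}}+\norm{g}_{L^{2}(\partial\Omega)}\bigr)$. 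The trace theorem then yields $\abs{\vec{a}}+\abs{\vec{b}}\le C\norm{u}_{H^{1}(\Omega)}$, which in turn bounds $\norm{h}_{C^{k+1,\alpha}(\Gamma)}$ through the $\vec{r}$-dependent estimate from the first paragraph; this is precisely the source of the $\norm{\vec{r}}_{C^{k+1,\alpha}(\partial\Omega)}$ term appearing on the right-hand side of the claimed inequality. Substituting these bounds and absorbing the $L^{2}$-norm produces the asserted estimate.

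I expect the principal difficulty to be conceptual rather than computational. The genuine subtlety is recognizing that the feedback term $h$, although built from $u$ through $\vec{a}$ and $\vec{b}$, is in fact fixed a priori by the uniqueness statement, so that \eqref{eq3.01ri} is truly local and the Schauder machinery applies without iteration; and then selecting the conormal (rather than the Dirichlet) version of the global Schauder estimate on a boundary split into two separated pieces. Once these two points are secured, the rest is routine elliptic regularity, which is why the detailed verification may reasonably be omitted with a reference to \cite{GT,Evans}.
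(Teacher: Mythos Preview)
Your proposal is correct and follows precisely the approach the paper takes: reduce to the local conormal problem \eqref{eq3.01ri}--\eqref{eq3.02ri}, observe that $h$ is a fixed datum of class $C^{k+1,\alpha}$ once $\vec{a}$ and $\vec{b}$ are determined, and then invoke standard global Schauder estimates with references to \cite{GT,Evans}. The paper in fact omits the proof entirely on exactly these grounds, so your write-up simply supplies the details behind that omission.
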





\subsection{Monolithic decomposition}
Now, we are ready to show the existence and uniqueness of the augmented Hodge decomposition with fluid-solid interaction mentioned in Section 2. As shown in the section, the decomposition is achieved monolithically as soon as we find the scalar field $p$. 

\begin{theorem}[Hodge decomposition with fluid-solid interaction]
	\label{thm-HHD-FSI} 
	Let $k\ge0$ be an integer and $\alpha\in(0,1)$. Assume $\partial\Omega$ is $C^{k+2}$ (resp., $C^{k+2,\alpha}$).
	Then, for any vector field $\vec{U}^{*}\in H^{k+1}(\Omega)$ (resp., $C^{k+1,\alpha}(\Omega)$)
	and any linear and angular velocities $\vec{v}^{*},\vec{\omega}^{*}\in\mathbb{R}^{d}$,
	the triple $(\vec{U}^{*},\vec{v}^{*},\vec{\omega}^{*})$ are uniquely
	decomposed as 
	\begin{equation}
	\left\{ \begin{aligned}\rho\vec{U}^{*} & =\rho\vec{U}+\nabla p\quad\text{in }\;\Omega\\
	m\vec{v}^{*} & =m\vec{v}-{\displaystyle \int_{\Gamma}p\vec{n}\, dS}\\
	\mathbb{I}\vec{\omega}^{*} & =\mathbb{I}\vec{\omega}-{\displaystyle \int_{\Gamma}p\vec{J}\, dS}
	\end{aligned}
	\right.\label{eq-FSI-decomposition}
	\end{equation}
	with a vector field $\vec{U}\in H^{k+1}(\Omega)$ (resp., $C^{k+1,\alpha}(\Omega)$), 
	$p\in H^{k+2}(\Omega)$ (resp., $C^{k+2,\alpha}(\Omega)$), and vectors $\vec{v},\vec{\omega}\in\mathbb{R}^{d}$
	that satisfy the incompressible condition and the non-penetration
	condition 
	\[
	\left\{ \begin{aligned}\nabla\cdot\vec{U} & =0\quad\text{in }\;\Omega,\\
	\vec{U}\cdot\vec{n} & =\vec{v}\cdot\vec{n}+\vec{\omega}\cdot\vec{J}\quad\text{on }\;\Gamma,\\
	\vec{U}\cdot\vec{n} & =0\quad\text{on }\;\Gamma'.
	\end{aligned}
	\right.
	\]
\end{theorem}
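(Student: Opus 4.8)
The plan is to reduce the decomposition to the solvability and regularity of the scalar potential $p$ through the Poisson problem \eqref{eq-governingEq-FSI}, and then recover $(\vec{U},\vec{v},\vec{\omega})$ by the explicit formulas. First I would specialize the abstract problem \eqref{eq:2-05ri}--\eqref{eq:2.06ri} to the present situation by choosing $\mathbb{A}=\rho^{-1}I$ (uniformly elliptic with $\lambda=\Lambda=\rho^{-1}$), $f=0$, $\vec{F}=\vec{U}^{*}$, $\alpha=1/m$, $\mathbb{B}=\mathbb{I}^{-1}$, and $\vec{r}=\vec{x}-\vec{c}$, so that $\vec{r}\times\vec{n}=\vec{J}$ and the requirement $\int_{\Gamma}\vec{r}\times\vec{n}\,dS=\int_{\Gamma}\vec{J}\,dS=0$ holds by Lemma~\ref{lemma-zero-nj}. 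Taking the boundary data $g=0$ on $\Gamma'$ and $g=-\vec{v}^{*}\cdot\vec{n}-\vec{\omega}^{*}\cdot\vec{J}$ on $\Gamma$, and using the scalar triple product identity $((\mathbb{I}^{-1}\vec{b})\times\vec{r})\cdot\vec{n}=\vec{J}\cdot(\mathbb{I}^{-1}\vec{b})$ with $\vec{b}=\int_{\Gamma}p\vec{J}\,dS$, the abstract system \eqref{eq:2-05ri}--\eqref{eq:2.06ri} becomes exactly \eqref{eq-governingEq-FSI}.

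Next I would verify the compatibility condition \eqref{eq:compatibility-ri}. Since $f=0$ and $\int_{\partial\Omega}g\,dS=-\vec{v}^{*}\cdot\int_{\Gamma}\vec{n}\,dS-\vec{\omega}^{*}\cdot\int_{\Gamma}\vec{J}\,dS$, Lemma~\ref{lemma-zero-nj} shows this vanishes automatically for every choice of $\vec{v}^{*},\vec{\omega}^{*}$. Hence Theorem~\ref{thm-Pexistence} yields a weak solution $p\in H^{1}(\Omega)$, unique up to an additive constant. To obtain the claimed regularity I would note that $\vec{n}$, $\vec{J}=(\vec{x}-\vec{c})\times\vec{n}$, and $\vec{r}=\vec{x}-\vec{c}$ inherit $C^{k+1}$ (resp.\ $C^{k+1,\alpha}$) regularity from the assumption $\partial\Omega\in C^{k+2}$ (resp.\ $C^{k+2,\alpha}$), so that $g$ and $\vec{r}$ lie in $H^{k+\frac12}(\partial\Omega)$ (resp.\ $C^{k+1,\alpha}$); then, since $f=0$ and $\vec{F}=\vec{U}^{*}\in H^{k+1}(\Omega)$ (resp.\ $C^{k+1,\alpha}(\Omega)$), Theorem~\ref{theorem-boundary-requality-higher} (resp.\ Theorem~\ref{theorem-boundary-regularity-continous}) gives $p\in H^{k+2}(\Omega)$ (resp.\ $C^{k+2,\alpha}(\Omega)$).

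With $p$ in hand I would define $\vec{U}:=\vec{U}^{*}-\rho^{-1}\nabla p$, $\vec{v}:=\vec{v}^{*}+\tfrac{1}{m}\int_{\Gamma}p\vec{n}\,dS$, and $\vec{\omega}:=\vec{\omega}^{*}+\mathbb{I}^{-1}\int_{\Gamma}p\vec{J}\,dS$, which is exactly \eqref{eq-FSI-decomposition}; the regularity of $\vec{U}$ follows from that of $p$. Because $p$ is now regular enough for the equation to hold in the strong sense, the interior equation gives $\nabla\cdot\vec{U}=\nabla\cdot\vec{U}^{*}-\rho^{-1}\Delta p=0$, and the boundary conditions \eqref{eq:2.05ri}, \eqref{eq:2.06ri}, after substituting the value of $\rho^{-1}\partial_{n}p$ and the definitions of $\vec{v},\vec{\omega}$, yield respectively $\vec{U}\cdot\vec{n}=0$ on $\Gamma'$ and $\vec{U}\cdot\vec{n}=\vec{v}\cdot\vec{n}+\vec{\omega}\cdot\vec{J}$ on $\Gamma$, the latter using the symmetry of $\mathbb{I}^{-1}$.

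Finally, for uniqueness I would observe that any triple satisfying \eqref{eq-FSI-decomposition} together with the incompressibility and non-penetration constraints forces its potential $p$ to solve \eqref{eq-governingEq-FSI}: taking the divergence of the first line gives the interior equation, and evaluating $\vec{U}\cdot\vec{n}$ on $\Gamma$ and $\Gamma'$ recovers the two boundary conditions. By Theorem~\ref{thm-Pexistence} any two such potentials differ by a constant, and since $\nabla p$, $\int_{\Gamma}p\vec{n}\,dS$, and $\int_{\Gamma}p\vec{J}\,dS$ are all unchanged when a constant is added to $p$ (again by Lemma~\ref{lemma-zero-nj}), the triple $(\vec{U},\vec{v},\vec{\omega})$ is uniquely determined. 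I expect the only genuinely delicate point to be the bookkeeping that matches the nonlocal Robin term to the abstract form and the verification that the geometric data $g$ and $\vec{r}$ carry the regularity demanded by the global regularity theorems; the existence, uniqueness, and regularity of $p$ itself are already supplied by Theorems~\ref{thm-Pexistence}, \ref{theorem-boundary-requality-higher}, and~\ref{theorem-boundary-regularity-continous}.
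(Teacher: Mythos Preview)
Your proposal is correct and follows essentially the same route as the paper: specialize the abstract Poisson problem with the choices $\mathbb{A}=\rho^{-1}I$, $f=0$, $\vec{F}=\vec{U}^{*}$, $\alpha=1/m$, $\mathbb{B}=\mathbb{I}^{-1}$, $\vec{r}=\vec{x}-\vec{c}$, $g=-(\vec{v}^{*}\cdot\vec{n}+\vec{\omega}^{*}\cdot\vec{J})\chi_{\Gamma}$, verify compatibility via Lemma~\ref{lemma-zero-nj}, invoke Theorems~\ref{thm-Pexistence}, \ref{theorem-boundary-requality-higher}, \ref{theorem-boundary-regularity-continous}, and then define the triple explicitly. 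Your uniqueness argument (two potentials differ by a constant, which is killed by Lemma~\ref{lemma-zero-nj}) is the same idea as the paper's linearity reduction to the homogeneous case; you are also a bit more explicit than the paper in checking that the constructed $(\vec{U},\vec{v},\vec{\omega})$ actually satisfies the incompressibility and non-penetration constraints, which is a welcome addition.
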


\begin{proof} The decomposition \eqref{eq-FSI-decomposition} yields
	the following problem for the scalar field $p$: 
	\begin{equation}
	\left\{ \begin{aligned}-\nabla\cdot\left(\frac{1}{\rho}\,\nabla p\right) & =-\nabla\cdot\vec{U}^{*}\;\text{ in }\;\Omega,\\
	\frac{1}{\rho}\frac{\partial p}{\partial n}+\vec{n}\cdot\frac{1}{m}{\displaystyle \int_{\Gamma}p\vec{n}\, dS+\vec{J}\cdot\mathbb{I}^{-1}{\displaystyle \int_{\Gamma}p\vec{J}\, dS}} & =\vec{U}^{*}\cdot\vec{n}-\vec{v}^{*}\cdot\vec{n}-\vec{\omega}^{*}\cdot\vec{J}\;\text{ on }\;\Gamma,\\
	\frac{1}{\rho}\frac{\partial p}{\partial n} & =\vec{U}^{*}\cdot\vec{n}\;\text{ on }\;\Gamma'.
	\end{aligned}
	\right.\label{eq-governingEq-FSI2}
	\end{equation}
	Set $\vec{F}=\vec{U}^{*}$, $f=0$, and $g=-(\vec{v}^{*}\cdot\vec{n}+\vec{\omega}^{*}\cdot\vec{J})\chi_{\Gamma}$.
	Applying Lemma~\ref{lemma-zero-nj} for the compatibility condition
	and setting $\vec{r}:=\vec{x}-\vec{c}$ so that $\vec{J}=\vec{r}\times\vec{n}$,
	Theorems~\ref{thm-Pexistence} and \ref{theorem-boundary-requality-higher} (resp., Theorem \ref{theorem-boundary-regularity-continous}) imply that there exists a solution
	$p\in \in H^{k+2}(\Omega)\,~(resp., ~C^{k+2,\alpha}(\Omega))$ of the problem \eqref{eq-governingEq-FSI2}.
	Then $\vec{U},\vec{v}$, and $\vec{\omega}$ are defined as 
	\[
	\vec{U}=\vec{U}^{*}-\frac{1}{\rho}\,\nabla p,\quad\vec{v}=\vec{v}^{*}+\frac{1}{m}\int_{\Gamma}p\vec{n}\, dS,\quad\text{and}\quad\vec{\omega}=\vec{\omega}^{*}+\mathbb{I}^{-1}\left(\int_{\Gamma}p\vec{J}\, dS\right).
	\]
	This shows the existence of a triple $(\vec{U},\vec{v},\vec{\omega})$ with $\vec{U}\in H^{k+1}(\Omega)$ (resp., $C^{k+1,\alpha}(\Omega)$)
	for the desired decomposition. Now, we show the uniqueness. To the
	end, by linearity it suffices to show that there exists only a trivial
	triple $(\vec{U},\vec{v},\vec{\omega})=(0,0,0)$ satisfying 
	\begin{equation}
	\rho\vec{U}=-\nabla p\;\text{ in }\;\Omega,\quad m\vec{v}=\int_{\Gamma}p\vec{n}\, dS,\quad\text{and}\quad\mathbb{I}\vec{\omega}=\int_{\Gamma}p\vec{J}\, dS\label{eq-equations-trivial}
	\end{equation}
	with the conditions $\nabla\cdot\vec{U}=0$ in $\Omega$, $\vec{U}\cdot\vec{n}=\vec{v}\cdot\vec{n}+\omega\cdot\vec{J}$
	on $\Gamma$, and $\vec{U}\cdot\vec{n}=0$ on $\Gamma'$. Taking into
	account all relations in terms of $p$, the scalar function $p$ must
	satisfy 
	\[
	\left\{ \begin{aligned}-\nabla\cdot\left(\frac{1}{\rho}\,\nabla p\right) & =0\;\text{ in }\;\Omega,\\
	\frac{1}{\rho}\frac{\partial p}{\partial n}+\vec{n}\cdot\frac{1}{m}{\displaystyle \intop_{\Gamma}p\vec{n}\, dS+\vec{J}\cdot\mathbb{I}^{-1}{\displaystyle \intop_{\Gamma}p\vec{J}\, dS}} & =0\;\text{ on }\;\Gamma,\\
	\frac{1}{\rho}\frac{\partial p}{\partial n} & =0\;\text{ on }\;\Gamma'.
	\end{aligned}
	\right.
	\]
	In light of Theorem \ref{thm-Pexistence}, we have that $\nabla p\equiv0,$ that is, $p$ is constant.
	Therefore, by applying Lemma~\ref{lemma-zero-nj} to \eqref{eq-equations-trivial}, we get that $\vec{U}=0$, $\vec{v}=0$, and $\vec{\omega}=0$. We have thus
	shown that for an input $(\vec{U}^{*},\vec{v}^{*},\vec{\omega}^{*})$ with $\vec{U}^{*}\in H^1(\Omega),$
	there exists a unique $(\vec{U},\vec{v},\vec{\omega})$ satisfying
	the decomposition \eqref{eq-FSI-decomposition}, which proves the
	theorem. \end{proof}

\section{Discretization by Heaviside function}\label{sec4}

\subsection{Heaviside function}

It is more convenient to express boundary conditions given on the
interface $\Gamma=\partial\Omega$ in the entire domain $\mathbb{R}^{d}$.
To do this, we consider the Heaviside function $H(x)=\chi_{\Omega}(x)$,
which equal to $1$ for $x\in\Omega$ and $0$ elsewhere. Then $\nabla H=-\delta_{\Gamma}\vec{n}$,
where $\delta_{\Gamma}$ is the Dirac delta function supported on
$\Gamma$ and $\vec{n}$ the outward normal vector at $\Gamma$. Using
these notations, the boundary conditions \eqref{eq-governingEq-FSI}
of $p$ for the Helmholtz-Hodge decomposition \eqref{eq-FSI-decomposition}
in fluid-solid interaction is represented as 
\begin{multline}
-\nabla\cdot\left(\frac{H}{\rho}\nabla p\right)+\nabla H\cdot\frac{1}{m}\left(\int_{\bR^{d}}p\nabla H\, dx\right)+(\vec{r}\times\nabla H)\cdot\bI^{-1}\left(\int_{\bR^{d}}p(\vec{r}\times\nabla H)\, dx\right)\\
=-\nabla\cdot(H\vec{U}^{*})+\vec{v}^{*}\cdot\nabla H+\vec{\omega}^{*}\cdot(\vec{r}\times\nabla H)\quad\text{in }\;\bR^{d}.\label{eq-Heaviside-governingEQ}
\end{multline}
with $\vec{r}=(\vec{x}-\vec{c}).$ Based on the Heaviside formulation,
we propose a numerical scheme to approximate $p$ in the following
subsection.

\subsection{Discretization based on Heaviside formulation}

In order to propose a numerical scheme for the problem, we introduce
numerical settings. First, we consider the case $d=2.$ Let $h\mathbb{Z}^{2}$
denote the uniform grid in $\mathbb{R}^{2}$ with step size $h$.
For each grid node $(x_{i},y_{j})\in h\mathbb{Z}^{2}$, $C_{ij}$
denotes the rectangular control volume centered at the node, and its
four edges are denoted by $E_{i\pm\frac{1}{2},j}$ and $E_{i,j\pm\frac{1}{2}}$
as follows.

\[
\begin{array}{rcccc}
C_{ij} & := & [x_{i-\frac{1}{2}},x_{i+\frac{1}{2}}] & \times & [y_{j-\frac{1}{2}},y_{j+\frac{1}{2}}]\\
E_{i\pm\frac{1}{2},j} & := & x_{i\pm\frac{1}{2}} & \times & [y_{j-\frac{1}{2}},y_{j+\frac{1}{2}}]\\
E_{i,j\pm\frac{1}{2}} & := & [x_{i-\frac{1}{2}},x_{i+\frac{1}{2}}] & \times & y_{j\pm\frac{1}{2}}
\end{array}
\]

Based on the MAC configuration, we define the node set and the edge
sets.

\begin{definition} $\Omega^{h}:=\Set{(x_{i},y_{j})\in h\mathbb{Z}^{2}:C_{ij}\cap\Omega\ne\emptyset}$
	is the set of nodes whose control volumes intersect the domain. In
	the same way, edge sets are defined as 
	\[
	\begin{aligned}E_{x}^{h}:=\Set{(x_{i+\frac{1}{2}},y_{j}):E_{i+\frac{1}{2},j}\cap\Omega\ne\emptyset},\quad E_{y}^{h}:=\Set{(x_{i},y_{j+\frac{1}{2}}):E_{i,j+\frac{1}{2}}\cap\Omega\ne\emptyset},\end{aligned}
	\]
	and $E^{h}:=E_{x}^{h}\cup E_{y}^{h}$. \end{definition}

Note that whenever $E_{i+\frac{1}{2},j}\cap\Omega\ne\emptyset$, $C_{ij}\cap\Omega\ne\emptyset$
and $C_{i+1,j}\cap\Omega\ne\emptyset$, since $E_{i+\frac{1}{2},j}\subset C_{ij}$
and $E_{i+\frac{1}{2},j}\subset C_{i+1,j}$.

In case when $d=3,$ the settings and related definitions are almost
the same as those for 2 dimensional case. Let $h\mathbb{Z}^{3}$ denote
the uniform grid in $\mathbb{R}^{3}$ with step size $h$. For each
grid node $(x_{i},y_{j},z_{k})\in h\mathbb{Z}^{3}$, $C_{ijk}$ denotes
the hexahedron control volume centered at the node, and its six faces
are denoted by $F_{i\pm\frac{1}{2},j,k}$ and $F_{i,j\pm\frac{1}{2},k}$
and $F_{i,j,k\pm\frac{1}{2}}$ as follows. 
\[
\begin{array}{rcccccc}
C_{ijk} & := & [x_{i-\frac{1}{2}},x_{i+\frac{1}{2}}] & \times & [y_{j-\frac{1}{2}},y_{j+\frac{1}{2}}] & \times & [z_{k-\frac{1}{2}},z_{k+\frac{1}{2}}]\\
F_{i\pm\frac{1}{2},j,k} & := & x_{i\pm\frac{1}{2}} & \times & [y_{j-\frac{1}{2}},y_{j+\frac{1}{2}}] & \times & [z_{k-\frac{1}{2}},z_{k+\frac{1}{2}}]\\
F_{i,j\pm\frac{1}{2},k} & := & [x_{i-\frac{1}{2}},x_{i+\frac{1}{2}}] & \times & y_{j\pm\frac{1}{2}} & \times & [z_{k-\frac{1}{2}},z_{k+\frac{1}{2}}]\\
F_{i,j,k\pm\frac{1}{2}} & := & [x_{i-\frac{1}{2}},x_{i+\frac{1}{2}}] & \times & [y_{j-\frac{1}{2}},y_{j+\frac{1}{2}}] & \times & z_{k\pm\frac{1}{2}}.
\end{array}
\]
Similarly, we define the node set $\Omega^{h}=\Set{(x_{i},y_{j})\in h\mathbb{Z}^{3}:C_{ijk}\cap\Omega\ne\emptyset}$
and the face sets $F_{x}^{h},F_{y}^{h},F_{z}^{h}$, and then $F^{h}=F_{x}^{h}\cup F_{y}^{h}\cup F_{z}^{h}$.

We split the node points into the inside points and the near-boundary
points as 
\begin{align*}
\Omega_{\circ}^{h} & :=\set{(x_{i},y_{j})\in\Omega^{h}:H_{i\pm\frac{1}{2},j}=H_{i,j\pm\frac{1}{2}}=1}\quad\text{for }\; d=2,\\
\Omega_{\circ}^{h} & :=\set{(x_{i},y_{j},z_{k})\in\Omega^{h}:H_{i\pm\frac{1}{2},j,k}=H_{i,j\pm\frac{1}{2},k}=H_{i,j,k\pm\frac{1}{2}}=1}\quad\text{for }\; d=3,
\end{align*}
and $\Omega_{\Gamma}^{h}:=\Omega^{h}\setminus\Omega_{\circ}^{h}.$

Since the arguments to the results given in this section are almost
the same, we consider only the two dimensional case and the results
can be extended easily to three dimension.

By the standard central finite differences, a discrete gradient and
a discrete divergence operators are defined as follows.

\begin{definition}[Discrete gradient and divergence operators]
	By $\dG_{x}$, we denote the central finite differences in the $x$-direction:
	\[
	(\dG_{x}p)_{i+\frac{1}{2},j}=\frac{p_{i+1,j}-p_{i,j}}{h}\quad\text{and}\quad(\dG_{x}u)_{i,j}=\frac{u_{i+\frac{1}{2},j}-u_{i-\frac{1}{2},j}}{h}.
	\]
	Similarly, $\dG_{y}$ denotes the central finite differences in the
	$y$-direction. The discrete gradient and divergence operators, denoted
	by $\dG$ and $\dD$ respectively, are defined as 
	\begin{align*}
	\dG[p_{i,j}]=\left([(\dG_{x}p)_{i+\frac{1}{2},j}],[(\dG_{y}p)_{i,j+\frac{1}{2}}]\right),\\
	\dD\left([u_{i+\frac{1}{2},j}],[v_{i,j+\frac{1}{2}}]\right)=[(\dG_{x}u+\dG_{y}v)_{ij}].
	\end{align*}
\end{definition}

From the definitions of discrete gradient and divergence operators,
we can see that one is the adjoint operator of the other. In other
word, the two discrete operators satisfy the integration by parts.

\begin{lemma}[Discrete integration by parts]\label{lemma-DInt-by-Part}
	Given a vector field and a scalar function 
	\[
	\vec{U}=\left([u_{i+\frac{1}{2},j}],[v_{i,j+\frac{1}{2}}]\right)\quad\text{and}\quad p=[p_{ij}]
	\]
	with support $E^{h}$ and $\Omega^{h}$ respectively, we have 
	\begin{multline*}
	\int(\dD\vec{U})p\, d\Omega^{h}:=\sum_{i,j}(\dD\vec{U})_{ij}\, p_{ij}\, h^{2}\\
	=-\sum_{i,j}\left(u_{i+\frac{1}{2},j}\,\dG_{x}p_{i+\frac{1}{2},j}+v_{i,j+\frac{1}{2}}\,\dG_{y}p_{i,j+\frac{1}{2}}\right)h^{2}=:-\int(\vec{U}\cdot\dG p)\, d\Omega^{h}.
	\end{multline*}
\end{lemma}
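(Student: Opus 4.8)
The plan is to prove the identity purely algebraically, via the discrete analogue of integration by parts (Abel summation, i.e.\ a shift of the summation index) carried out separately in the $x$- and $y$-directions. First I would expand the definition of the discrete divergence,
\[
(\dD\vec{U})_{ij}=(\dG_{x}u)_{ij}+(\dG_{y}v)_{ij}=\frac{u_{i+\frac{1}{2},j}-u_{i-\frac{1}{2},j}}{h}+\frac{v_{i,j+\frac{1}{2}}-v_{i,j-\frac{1}{2}}}{h},
\]
and substitute it into $\sum_{i,j}(\dD\vec{U})_{ij}\,p_{ij}\,h^{2}$, splitting the result into an $x$-sum and a $y$-sum that are handled identically.

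For the $x$-sum, the key step is the reindexing
\[
h\sum_{i,j}u_{i-\frac{1}{2},j}\,p_{ij}=h\sum_{i,j}u_{i+\frac{1}{2},j}\,p_{i+1,j},
\]
obtained by shifting the summation index $i\mapsto i+1$. Subtracting then gives
\[
h\sum_{i,j}\big(u_{i+\frac{1}{2},j}-u_{i-\frac{1}{2},j}\big)p_{ij}=-h\sum_{i,j}u_{i+\frac{1}{2},j}\big(p_{i+1,j}-p_{ij}\big)=-h^{2}\sum_{i,j}u_{i+\frac{1}{2},j}\,(\dG_{x}p)_{i+\frac{1}{2},j},
\]
where the last equality is merely the definition of $\dG_{x}p$. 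Applying the identical argument in the $y$-direction and adding the two contributions produces exactly the claimed formula.

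The only point requiring care --- and the main (mild) obstacle --- is the legitimacy of the index shift, that is, the absence of boundary terms. This is where the finiteness of the supports enters: since $\vec{U}$ and $p$ are supported on the finite sets $E^{h}$ and $\Omega^{h}$, extending them by zero to all of $h\mathbb{Z}^{2}$ makes every sum finite, so the shift introduces no leftover boundary contribution. Moreover, the compatibility noted just above the definition --- whenever an edge lies in $E^{h}$ both of its endpoint control volumes meet $\Omega$, so the two adjacent nodal values of $p$ are retained --- guarantees that the differences $(\dG_{x}p)_{i+\frac{1}{2},j}$ occurring on the support of $u$ are genuine and that no nonzero term is dropped in the reindexing. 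As the entire computation is symmetric in the two coordinate directions and uses nothing beyond the definitions of $\dG_{x}$, $\dG_{y}$, and $\dD$, I expect no further difficulty.
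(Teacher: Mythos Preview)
Your proposal is correct and is precisely the argument the paper has in mind: the paper's proof says only that the lemma follows directly from the definitions of the discrete gradient and divergence, and your index-shift computation is exactly how one unpacks that remark. There is nothing to add.
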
 \begin{proof} Using the definitions of discrete gradient
and divergence operators, it is easy to show this lemma. \end{proof}

The Heaviside functions are defined on the edge set as follows.

\begin{definition}[Heaviside function] For each edge, the Heaviside
	function $H$ is given as 
	\[
	H_{i+\frac{1}{2},j}=\frac{\length(E_{i+\frac{1}{2},j}\cap\Omega)}{\length(E_{i+\frac{1}{2},j})}\quad\text{and}\quad H_{i,j+\frac{1}{2}}=\frac{\length(E_{i,j+\frac{1}{2}}\cap\Omega)}{\length(E_{i,j+\frac{1}{2}})}.
	\]
\end{definition}

Note that $H_{i+\frac{1}{2},j}$, $H_{i,j+\frac{1}{2}}\in[0,1]$ are
equal to $1$ if and only if the edge lies totally inside the domain,
and $0$ if and only if the edge lies completely outside.

For $d=3,$ the Heaviside functions are defined on $F^{h}$ as follows.
\begin{gather*}
H_{i+\frac{1}{2},j,k}=\frac{\area(F_{i+\frac{1}{2},j,k}\cap\Omega)}{\area(F_{i+\frac{1}{2},j,k})},\quad H_{i,j+\frac{1}{2},k}=\frac{\area(F_{i,j+\frac{1}{2},k}\cap\Omega)}{\area(F_{i,j+\frac{1}{2},k})},\\
H_{i,j,k+\frac{1}{2}}=\frac{\area(F_{i,j,k+\frac{1}{2}}\cap\Omega)}{\area(F_{i,j,k+\frac{1}{2}})}.
\end{gather*}

Having defined discrete gradient and divergence operators and the
discrete Heaviside function, we have the following lemma, which is
analogous to Lemma \ref{lemma-zero-nj}.

\begin{lemma} \label{lemma-discrte-nj-zero} The discrete Heaviside
	function $H=[H_{i+\frac{1}{2},j}]\cup[H_{i,j+\frac{1}{2}}]$ satisfies
	the relations 
	\[
	\sum_{i,j}(\dG H)_{ij}=0\quad\text{and}\quad\sum_{i,j}((x_{i},y_{j})-\vec{c})\times(\dG H)_{ij}=0.
	\]
\end{lemma}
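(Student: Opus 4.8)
The plan is to mirror the continuous argument of Lemma~\ref{lemma-zero-nj}, replacing the Gauss--Green theorem by summation by parts, i.e. the discrete integration by parts of Lemma~\ref{lemma-DInt-by-Part}. First I would unwind the notation: since $H=[H_{i+\frac{1}{2},j}]\cup[H_{i,j+\frac{1}{2}}]$ lives on the edge set, the node-valued vector $\dG H$ must be read componentwise as the edges-to-nodes central differences
\[
(\dG H)_{ij}=\left((\dG_x H)_{ij},(\dG_y H)_{ij}\right)=\left(\frac{H_{i+\frac{1}{2},j}-H_{i-\frac{1}{2},j}}{h},\ \frac{H_{i,j+\frac{1}{2}}-H_{i,j-\frac{1}{2}}}{h}\right),
\]
which is the discrete analogue of $\nabla H=-\delta_{\Gamma}\vec{n}$. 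Because $\Omega$ is bounded, only finitely many edge values $H_{i+\frac{1}{2},j}$, $H_{i,j+\frac{1}{2}}$ are nonzero; this finiteness is what makes the telescoping sums below legitimate, and it is the discrete counterpart of the compact support of $\delta_{\Gamma}$.

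For the first identity I would treat the two components separately. For the $x$-component, summing $(\dG_x H)_{ij}$ over $i$ with $j$ fixed telescopes,
\[
\sum_i\frac{H_{i+\frac{1}{2},j}-H_{i-\frac{1}{2},j}}{h}=0,
\]
since the nonzero terms cancel in pairs; equivalently, one pairs $(\dG_x H)_{ij}$ against the constant node function $1$ and moves the difference onto $1$ via Lemma~\ref{lemma-DInt-by-Part}, using $\dG_x 1=0$. Summing over $j$ then gives $\sum_{ij}(\dG_x H)_{ij}=0$, and the $y$-component is identical, so $\sum_{ij}(\dG H)_{ij}=0$.

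For the second identity I would expand the planar cross product as
\[
((x_i,y_j)-\vec{c})\times(\dG H)_{ij}=(x_i-c_1)(\dG_y H)_{ij}-(y_j-c_2)(\dG_x H)_{ij},
\]
with $\vec{c}=(c_1,c_2)$. The crucial structural point --- and the only thing that needs care --- is that the cross product pairs each lever-arm coordinate with the derivative in the \emph{other} direction, so in each term the scalar factor is constant along the direction of differencing. Thus $(x_i-c_1)$ can be pulled out of the inner $j$-sum, which telescopes to $0$ by the first identity, and likewise $(y_j-c_2)$ comes out of the inner $i$-sum; equivalently $\dG_y(x_i-c_1)=0$ and $\dG_x(y_j-c_2)=0$, so both terms vanish after summation by parts, and the weighted sum is $0$.

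I do not anticipate a genuine obstacle: once the operator is correctly interpreted, both claims reduce to one-dimensional telescoping, and the main point to get right is the index mismatch in the cross product that keeps each lever arm constant in the relevant direction. The only hypothesis actually used is the boundedness of $\Omega$ (finitely many nonzero $H$ values); no regularity of $\Gamma$ is needed.
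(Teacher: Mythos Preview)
Your argument is correct and is essentially the same as the paper's: the paper's one-line proof that ``every nonzero element of $H$ appears twice with opposite sign in the summations'' is precisely your telescoping observation, and your remark that the lever arm $(x_i-c_1)$ (resp.\ $(y_j-c_2)$) is constant along the $j$- (resp.\ $i$-) direction of differencing is exactly what makes the same cancellation go through for the cross-product sum. You have simply spelled out the details the paper leaves implicit.
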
 \begin{proof} The proof is straightforward because every
nonzero element of $H$ appears twice with opposite sign in the summations.
\end{proof}

Now, we are ready to formulate a discretization for Equation~\eqref{eq-governingEq-FSI}
based on the Heaviside representation \eqref{eq-Heaviside-governingEQ}.
Using the settings and the definition of $H$ defined as a scalar
function on $E^{h}$, we derive a numerical scheme for $p$ as 
\begin{multline}
-\dD\left(\frac{H}{\rho}\dG p\right)+\dG H\cdot\frac{1}{m}\left(\sum p\dG Hh^{2}\right)+\vec{J}^{h}\cdot\bI^{-1}\left(\sum p\vec{J}^{h}h^{2}\right)\\
=-\dD(H\vec{U}^{*})+\vec{v}^{*}\cdot\dG H+\vec{\omega}^{*}\cdot\vec{J}^{h}\label{eq-numerical-scheme}
\end{multline}
where $\vec{J}^{h}=(\vec{x}-\vec{c})\times\dG H$. More precisely,
the terms on the left-hand side of Equation~\eqref{eq-numerical-scheme}
read as 
\begin{align*}
\left(\dD\left(\frac{H}{\rho}\dG p\right)\right)_{ij} & =\dfrac{1}{h}\left(\dfrac{H_{i+\frac{1}{2},j}}{\rho_{i+\frac{1}{2},j}}\,\dfrac{p_{i+1,j}-p_{ij}}{h}-\dfrac{H_{i-\frac{1}{2},j}}{\rho_{i-\frac{1}{2},j}}\,\dfrac{p_{ij}-p_{i-1,j}}{h}\right)\\
& \qquad+\dfrac{1}{h}\left(\dfrac{H_{i,j+\frac{1}{2}}}{\rho_{i,j+\frac{1}{2}}}\,\dfrac{p_{i,j+1}-p_{ij}}{h}-\dfrac{H_{i,j-\frac{1}{2}}}{\rho_{ij-\frac{1}{2}}}\,\dfrac{p_{ij}-p_{i,j-1}}{h}\right),\\
(\dG H)_{ij} & =\left(\dfrac{H_{i+\frac{1}{2},j}-H_{i-\frac{1}{2},j}}{h},\;\dfrac{H_{i,j+\frac{1}{2}}-H_{i,j-\frac{1}{2}}}{h}\right),\\
(\vec{J}^{h})_{ij} & =(\vec{\tilde{x}}_{ij}-\vec{c})\times(\dG H)_{ij}.
\end{align*}
where $\vec{\tilde{x}}_{ij}$ is given as 
\[
\vec{\tilde{x}}_{ij}=\begin{cases}
\frac{1}{2}(\vec{x}+\vec{y}) & \quad\mbox{if}\quad\partial C_{ij}\cap\partial\Omega=\set{\vec{x},\vec{y}},\\
0 & \quad\mbox{if}\quad\partial C_{ij}\cap\partial\Omega=\emptyset.
\end{cases}
\]

\subsection{Stability}

Let $L^{h}:\bR^{\abs{\Omega^{h}}}\to\bR^{\abs{\Omega^{h}}}$ be a
linear operator associated with the left-hand side of \eqref{eq-numerical-scheme}.
Then the linear system \eqref{eq-numerical-scheme} reads as: Given
a triple $(\vec{U}^{*},\vec{v}^{*},\vec{\omega}^{*})$, find a scalar
function $p:\Omega^{h}\to\bR$ satisfying 
\begin{equation}
L^{h}p=-\dD(H\vec{U}^{*})+\vec{v}^{*}\cdot\dG H+\vec{\omega}^{*}\cdot\vec{J}^{h}.\label{eq-p-equation-Lp}
\end{equation}
In order to verify the existence of $p$, we first estimate some properties
of $L^{h}.$

\begin{lemma} \label{lemma-SpSemiD-Lh} The linear operator $L^{h}$
	is symmetric and positive semi-definite on $\bR^{\abs{\Omega^{h}}}$
	and $\ker(L^{h})=\Span\set{1_{\Omega^{h}}}$ where $1_{\Omega^{h}}$
	is the function for which $1_{\Omega^{h}}\equiv1$ on $\Omega^{h}$.
\end{lemma}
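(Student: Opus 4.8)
The plan is to prove the three claimed properties of $L^h$ by exploiting the discrete integration by parts (Lemma~\ref{lemma-DInt-by-Part}) and the discrete analogue of the vanishing-moment identity (Lemma~\ref{lemma-discrte-nj-zero}), in close parallel with the continuous energy computation carried out before Theorem~\ref{thm-energyEstimate}. The cleanest route is to compute the bilinear form $\langle L^h p, q\rangle := \sum_{i,j} (L^h p)_{ij}\, q_{ij}\, h^2$ for arbitrary scalar grid functions $p,q$ supported on $\Omega^h$ and show it equals a manifestly symmetric expression in $p$ and $q$.

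First I would apply Lemma~\ref{lemma-DInt-by-Part} to the leading term: since $-\dD(\tfrac{H}{\rho}\dG p)$ is a discrete divergence applied to the edge field $\tfrac{H}{\rho}\dG p$, pairing it against $q$ and summing by parts gives $\sum \tfrac{H}{\rho}\, \dG p \cdot \dG q\, h^2$, which is symmetric in $p,q$. The two nonlocal terms are already in an explicitly symmetric ``rank-one'' form: pairing $\dG H \cdot \tfrac1m(\sum p\,\dG H\, h^2)$ against $q$ produces $\tfrac1m (\sum p\,\dG H\, h^2)\cdot(\sum q\,\dG H\, h^2)$, and likewise the $\vec J^h$ term produces $(\sum p\,\vec J^h\, h^2)\cdot \bI^{-1}(\sum q\,\vec J^h\, h^2)$, using that $\bI^{-1}$ is symmetric. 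Collecting these,
\[
\langle L^h p, q\rangle = \sum_{i,j} \frac{H}{\rho}\,\dG p\cdot\dG q\, h^2 + \frac1m\Big(\sum p\,\dG H\, h^2\Big)\cdot\Big(\sum q\,\dG H\, h^2\Big) + \Big(\sum p\,\vec J^h\, h^2\Big)\cdot\bI^{-1}\Big(\sum q\,\vec J^h\, h^2\Big),
\]
which is symmetric in $p$ and $q$; setting $q=p$ shows positive semi-definiteness, since each $H_{i\pm 1/2,j}/\rho\ge 0$, each $1/m>0$, and $\bI^{-1}$ is positive definite.

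For the kernel, I would argue that $\langle L^h p,p\rangle = 0$ forces all three nonnegative summands to vanish. The first summand forces $\tfrac{H}{\rho}|\dG p|^2 = 0$ on every edge, i.e.\ $\dG p = 0$ across every edge with positive Heaviside weight; since $\Omega^h$ is connected through such edges, $p$ is constant on $\Omega^h$. Conversely $1_{\Omega^h}\in\ker(L^h)$ follows directly: its discrete gradient $\dG H$ need not vanish, but by Lemma~\ref{lemma-discrte-nj-zero} the two nonlocal inner factors $\sum 1\cdot\dG H\, h^2$ and $\sum 1\cdot\vec J^h\, h^2$ both vanish (the latter because $\vec J^h = (\vec{\tilde x}-\vec c)\times\dG H$ and the lemma gives $\sum (\vec x - \vec c)\times\dG H = 0$), while the leading term vanishes because $\dG 1 \equiv 0$. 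Hence $L^h 1_{\Omega^h}=0$, giving $\Span\{1_{\Omega^h}\}\subseteq\ker(L^h)$, and the connectivity argument gives the reverse inclusion.

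The main obstacle I anticipate is the kernel computation, specifically making the connectivity argument airtight. The subtle point is that $\dG p = 0$ only across edges with $H_{i\pm1/2,j}>0$, so one must verify that any two nodes of $\Omega^h$ can be joined by a path of such positive-weight edges; this uses that each node of $\Omega^h$ has its control volume meeting $\Omega$ and that $\Omega$ is connected, so the union of control volumes meeting $\Omega$ is connected through edges whose length-fraction inside $\Omega$ is positive. I would also need to confirm that $L^h$ is a well-defined self-map of $\bR^{|\Omega^h|}$ (i.e.\ the nonlocal terms, which a priori live on $\bR^d$-valued data, restrict correctly to grid functions on $\Omega^h$); this is routine but should be stated. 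The symmetry and semi-definiteness parts are essentially immediate once the bilinear form is written down.
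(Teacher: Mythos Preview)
Your proposal is correct and follows essentially the same route as the paper: compute the bilinear form $\langle L^h p, q\rangle$ via the discrete integration by parts (Lemma~\ref{lemma-DInt-by-Part}), read off symmetry and positive semi-definiteness from the resulting expression, and use Lemma~\ref{lemma-discrte-nj-zero} for the kernel inclusion $\Span\{1_{\Omega^h}\}\subseteq\ker(L^h)$. Your treatment of the reverse kernel inclusion is in fact more careful than the paper's, which simply asserts that $(\dG_x p)_{i+\frac12,j}=(\dG_y p)_{i,j+\frac12}=0$ for all $(x_i,y_j)\in\Omega^h$ without isolating the connectivity-through-positive-$H$-edges issue you flag; your path argument through $\Omega$ is the right way to close that gap.
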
 \begin{proof} Let $p^{(1)},p^{(2)}\in\bR^{\abs{\Omega^{h}}}$
be arbitrarily given. The discrete integration by parts shown in Lemma~\ref{lemma-DInt-by-Part}
implies 
\begin{align*}
& \ip{L^{h}p^{(1)},p^{(2)}}\\
& =-\sum_{i,j}p_{ij}^{(2)}\left(\dD\left(\frac{H}{\rho}\dG p^{(1)}\right)\right)_{ij}+\sum_{i,j}p_{ij}^{(2)}\left(\dG H\right)_{ij}\cdot\frac{1}{m}\left(\sum_{k,l}p_{kl}^{(1)}(\dG H)_{kl}\, h^{2}\right)\\
& \qquad\qquad+\sum_{i,j}p_{ij}^{(2)}(\vec{J}^{h})_{ij}\cdot\bI^{-1}\left(\sum_{k,l}p_{kl}^{(1)}(\vec{J}^{h})_{kl}\, h^{2}\right)\\
& =\sum_{i,j}\frac{H_{i+\frac{1}{2},j}}{\rho_{i+\frac{1}{2},j}}(\dG_{x}p^{(2)})_{i+\frac{1}{2},j}(\dG_{x}p^{(1)})_{i+\frac{1}{2},j}+\sum_{i,j}\frac{H_{i,j+\frac{1}{2}}}{\rho_{i,j+\frac{1}{2}}}(\dG_{y}p^{(2)})_{i,j+\frac{1}{2}}(\dG_{y}p^{(1)})_{i,j+\frac{1}{2}}\\
& \qquad\qquad+\frac{h^{2}}{m}\left(\sum_{i,j}p_{ij}^{(2)}(\dG H)_{ij}\right)\cdot\left(\sum_{k,l}p_{kl}^{(1)}(\dG H)_{kl}\right)\\
& \qquad\qquad+h^{2}\left(\sum_{i,j}p_{ij}^{(2)}(\vec{J}^{h})_{ij}\right)\cdot\bI^{-1}\left(\sum_{k,l}p_{kl}^{(1)}(\vec{J}^{h})_{kl}\right)\\
& \quad=\ip{p^{(1)},L^{h}p^{(2)}}
\end{align*}
and the symmetry of $\bI^{-1}$ shows that $L^{h}$ is symmetric.
In particular, if $p^{(1)}=p^{(2)}$, we have 
\begin{multline*}
\ip{L^{h}p^{(1)},p^{(1)}}=\sum_{i,j}\frac{H_{i+\frac{1}{2},j}}{\rho_{i+\frac{1}{2},j}}(\dG_{x}p^{(1)})_{i+\frac{1}{2},j}^{2}+\sum_{i,j}\frac{H_{i,j+\frac{1}{2}}}{\rho_{i,j+\frac{1}{2}}}(\dG_{y}p^{(1)})_{i,j+\frac{1}{2}}^{2}\\
+\frac{h^{2}}{m}\Abs{\sum_{i,j}p_{ij}^{(1)}(\dG H)_{ij}}^{2}+h^{2}\left(\sum_{i,j}p_{ij}^{(1)}(\vec{J}^{h})_{ij}\right)\cdot\bI^{-1}\left(\sum_{k,l}p_{kl}^{(1)}(\vec{J}^{h})_{k\ell}\right),
\end{multline*}
which implies that $\ip{L^{h}p,p}\ge0$ for all $p\in\bR^{\abs{\Omega^{h}}}$
because the inertia matrix $\bI$ is positive-definite; hence, $L^{h}$
is positive semi-definite. Let $p\in\ker(L^{h})$. Then we have $\ip{L^{h}p,p}=0$,
which implies 
\[
(\dG_{x}p)_{i+\frac{1}{2},j}=(\dG_{x}p)_{i+\frac{1}{2},j}=0,\quad\forall(x_{i},y_{j})\in\Omega^{h}.
\]
This implies that $p_{ij}=p_{i+1,j}=p_{i,j+1}$ for all $(x_{i},y_{j})\in\Omega^{h}$,
so that $p$ is constant. Conversely, if $p$ is a constant vector,
then Lemma~\ref{lemma-discrte-nj-zero} shows that $L^{h}p=0$. This
completes the proof. \end{proof}

The following theorem shows the existence and uniqueness condition
of the solution for the linear system $L^{h}p=f.$

\begin{theorem}\label{thm-solvability} The linear equation $L^{h}p=f$
	is solvable if and only if $\sum_{i,j}f_{ij}=0$. Furthermore, there
	exists a unique solution $p\in\set{1_{\Omega^{h}}}^{\perp}$. \end{theorem}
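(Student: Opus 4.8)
The plan is to treat the assertion as a purely finite-dimensional linear-algebra statement about the operator $L^{h}$ on $\bR^{\abs{\Omega^{h}}}$, leaning entirely on the structural facts already established in Lemma~\ref{lemma-SpSemiD-Lh}: namely that $L^{h}$ is symmetric (self-adjoint with respect to the standard inner product $\ip{p,q}=\sum_{i,j}p_{ij}q_{ij}$) and that $\ker(L^{h})=\Span\set{1_{\Omega^{h}}}$. The engine of the proof is the finite-dimensional Fredholm alternative: for a self-adjoint operator on a finite-dimensional inner product space one has the orthogonal decomposition $\bR^{\abs{\Omega^{h}}}=\ker(L^{h})\oplus\operatorname{ran}(L^{h})$, and hence $\operatorname{ran}(L^{h})=\ker(L^{h})^{\perp}$.

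First I would record the solvability criterion. Since $L^{h}$ is self-adjoint, the identity $\operatorname{ran}(L^{h})=\ker(L^{h})^{\perp}=\Span\set{1_{\Omega^{h}}}^{\perp}$ holds. Therefore $L^{h}p=f$ admits a solution if and only if $f\in\Span\set{1_{\Omega^{h}}}^{\perp}$, i.e.\ $f\perp 1_{\Omega^{h}}$. Translating this orthogonality into the standard inner product gives $\ip{f,1_{\Omega^{h}}}=\sum_{i,j}f_{ij}=0$, which is exactly the stated compatibility condition.

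For the uniqueness claim I would argue as follows. Assume $\sum_{i,j}f_{ij}=0$ and let $p_{0}$ be any solution, which exists by the previous step. Decompose $p_{0}=\tilde{p}+c\,1_{\Omega^{h}}$ with $\tilde{p}\in\Span\set{1_{\Omega^{h}}}^{\perp}$ and $c\in\bR$; since $L^{h}1_{\Omega^{h}}=0$ by Lemma~\ref{lemma-SpSemiD-Lh}, the component $\tilde{p}$ is again a solution and lies in $\set{1_{\Omega^{h}}}^{\perp}$, giving existence within the prescribed subspace. If $p_{1},p_{2}\in\set{1_{\Omega^{h}}}^{\perp}$ are two solutions, then $L^{h}(p_{1}-p_{2})=0$ forces $p_{1}-p_{2}\in\ker(L^{h})=\Span\set{1_{\Omega^{h}}}$; but $p_{1}-p_{2}$ also lies in $\set{1_{\Omega^{h}}}^{\perp}$, and $\Span\set{1_{\Omega^{h}}}\cap\set{1_{\Omega^{h}}}^{\perp}=\{0\}$, so $p_{1}=p_{2}$.

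I do not anticipate a genuine obstacle: the substantive content—symmetry, positive semi-definiteness, and the precise identification $\ker(L^{h})=\Span\set{1_{\Omega^{h}}}$—was already handled in Lemma~\ref{lemma-SpSemiD-Lh}, and what remains is the standard self-adjoint Fredholm alternative. The only point meriting explicit care is the elementary but essential translation of the abstract condition $f\perp 1_{\Omega^{h}}$ into the summation condition $\sum_{i,j}f_{ij}=0$, which is immediate once the standard inner product on $\bR^{\abs{\Omega^{h}}}$ is fixed.
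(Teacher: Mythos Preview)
Your proposal is correct and follows essentially the same approach as the paper: both invoke Lemma~\ref{lemma-SpSemiD-Lh} to obtain $R(L^{h})=\ker(L^{h})^{\perp}=\set{1_{\Omega^{h}}}^{\perp}$ from self-adjointness, which yields the solvability criterion $\sum_{i,j}f_{ij}=0$. The only cosmetic difference is that the paper phrases uniqueness by observing that $L^{h}$ is symmetric positive definite when restricted to $R(L^{h})=\set{1_{\Omega^{h}}}^{\perp}$, whereas you spell out the equivalent orthogonal-decomposition and kernel-intersection argument.
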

\begin{proof} Since $L^{h}$ is symmetric, Lemma~\ref{lemma-SpSemiD-Lh}
	implies that the range of $L^{h}$, denoted by $R(L^{h})$, is the
	orthogonal complement of $\Span\set{1_{\Omega^{h}}}$; that is, $R(L^{h})=\set{1_{\Omega^{h}}}^{\perp}$.
	Also, the lemma yields that $L^{h}$ is symmetric positive definite
	on $R(L^{h})$ so that for $f\in\{1_{\Omega^{h}}\}^{\perp},$ the
	equation $L^{h}p=f$ has a unique solution $p\in\set{1_{\Omega^{h}}}^{\perp}.$
\end{proof}

Theorem \ref{thm-solvability} may be regarded as an analogy of the
compatibility condition shown in Theorem \ref{thm-Pexistence} in
the following sense. Given a triple $(\vec{U}^{*},\vec{v}^{*},\vec{\omega}^{*})$,
consider the problem of finding a scalar function $p:\Omega^{h}\to\bR$
satisfying 
\begin{equation}
L^{h}p=-\dD\left(H\vec{U}^{*}\right)+\vec{v}^{*}\cdot\dG H+\vec{\omega}^{*}\cdot\vec{J}^{h}\quad\text{in}\quad\Omega^{h}.\label{eq-main-problem-3-7}
\end{equation}
Theorem \ref{thm-solvability} shows that the linear system \eqref{eq-main-problem-3-7}
is solvable if and only if 
\[
-\dD(H\vec{U}^{*})+\vec{v}^{*}\cdot\dG H+\vec{\omega}^{*}\cdot\vec{J}^{h}\in\set{1_{\Omega^{h}}}^{\perp},
\]
that is, 
\begin{equation}
\sum_{(x_{i},y_{j})\in\Omega^{h}}\left(-\dD(H\vec{U}^{*})+\vec{v}^{*}\cdot\dG H+\vec{\omega}^{*}\cdot\vec{J}^{h}\right)_{ij}=0,\label{eq3-7-discrete-solvability}
\end{equation}
Precisely, the problem \eqref{eq-main-problem-3-7} reads as 
\[
-\dD\left(\frac{1}{\rho}\dG p\right)=-\dD\vec{U}^{*}\quad\text{in}\quad\Omega_{\circ}^{h}
\]
and 
\begin{multline*}
-\dD\left(\frac{H}{\rho}\dG p\right)+\dG H\cdot\frac{1}{m}\left(\sum p\dG H\, h^{2}\right)+\vec{J}^{h}\cdot\mathbb{I}^{-1}\left(\sum p\vec{J}^{h}\, h^{2}\right)\\
=-\dD(H\vec{U}^{*})+\vec{v}^{*}\cdot\dG H+\vec{\omega}^{*}\cdot\vec{J}^{h}\quad\text{in}\quad\Omega_{\Gamma}^{h}.
\end{multline*}
Also, splitting the summation on the left-hand side of \eqref{eq3-7-discrete-solvability},
we have 
\[
-\sum_{(x_{i},y_{j})\in\Omega_{o}^{h}}(\dD\vec{U}^{*})_{ij}+\sum_{(x_{i},y_{j})\in\Omega_{\Gamma}^{h}}\left(-\dD(H\vec{U}^{*})+\vec{v}^{*}\cdot\dG H+\vec{\omega}^{*}\cdot\vec{J}^{h}\right)_{ij}=0.
\]
This is a discrete version of compatibility condition.

Once $p$ is solved, the triple $(\vec{U}^{*},\vec{v}^{*},\vec{\omega}^{*})$
is decomposed as 
\[
(\vec{U}^{*},\vec{v}^{*},\vec{\omega}^{*})=(\vec{U},\vec{v},\vec{\omega})+\left(\frac{1}{\rho}\dG p,\;\frac{1}{m}\sum_{k,l}(p\dG H)_{kl}\, h^{2},\;\bI^{-1}\sum_{k,l}(p\vec{J}^{h})_{kl}\, h^{2}\right).
\]

Now, we are to show that the decomposition is unique with $p$ satisfying
\eqref{eq-main-problem-3-7} and orthogonal with respect to the inner
product $\ip{\cdot,\cdot}_{E_{h}}$ defined by 
\begin{equation}
\Ip{(\vec{U}_{1}^{h},\vec{v}_{1}^{h},\vec{\omega}_{1}^{h}),(\vec{U}_{2}^{h},\vec{v}_{2}^{h},\vec{\omega}_{2}^{h})}_{E_{h}}
:=\int\frac{1}{2}\rho H\vec{U}_{1}^{h}\cdot\vec{U}_{2}^{h}\, d\Omega^{h}+\frac{1}{2}m\vec{v}_{1}^{h}\cdot\vec{v}_{2}^{h}+\frac{1}{2}\vec{\omega}_{1}^{h}\cdot\bI\vec{\omega}_{2}^{h}.\label{eq-Eh-innerproduct}
\end{equation}

\begin{theorem} \label{corollary-main} Given a triple $(\vec{U}^{*},\vec{v}^{*},\vec{\omega}^{*})$,
	there exists a unique $p\in\set{1_{\Omega^{h}}}^{\perp}$ satisfying
	\begin{equation}
	L^{h}p=-\dD(H\vec{U}^{*})+\vec{v}^{*}\cdot\dG H+\vec{\omega}^{*}\cdot\vec{J}^{h}\quad\text{in}\quad\Omega^{h}.\label{eq-Lp-equation}
	\end{equation}
	Therefore, the triple $(\vec{U}^{*},\vec{v}^{*},\vec{\omega}^{*})$
	is uniquely decomposed as 
	\begin{equation}
	(\vec{U}^{*},\vec{v}^{*},\vec{\omega}^{*})=(\vec{U},\vec{v},\vec{\omega})+\left(\frac{1}{\rho}\dG p,\;\frac{1}{m}\sum_{k,l}(p\dG H)_{kl}\, h^{2},\;\bI^{-1}\sum_{k,l}(p\vec{J}^{h})_{kl}h^{2}\right)\label{eq-main-discrete-decomposition2}
	\end{equation}
	with $p$ satisfying \eqref{eq-Lp-equation}. Furthermore, the decomposition
	is orthogonal with respect to the inner product \eqref{eq-Eh-innerproduct}.
\end{theorem}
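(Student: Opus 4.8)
The plan is to deduce all three assertions—existence and uniqueness of $p$, the decomposition, and its orthogonality—from the solvability result Theorem~\ref{thm-solvability} together with the discrete integration-by-parts identity of Lemma~\ref{lemma-DInt-by-Part}, in close parallel with the continuous argument behind Theorem~\ref{thm-energyEstimate}. For existence and uniqueness of $p$, Theorem~\ref{thm-solvability} reduces the matter to checking that the right-hand side of \eqref{eq-Lp-equation} lies in $\set{1_{\Omega^{h}}}^{\perp}$, i.e. that its sum over $\Omega^{h}$ vanishes. I would verify this term by term: taking the constant test function $1_{\Omega^{h}}$ (whose discrete gradient vanishes on $E^{h}$) in Lemma~\ref{lemma-DInt-by-Part} gives $\sum_{ij}(\dD(H\vec{U}^{*}))_{ij}=0$, while $\sum_{ij}(\vec{v}^{*}\cdot\dG H)_{ij}=\vec{v}^{*}\cdot\sum_{ij}(\dG H)_{ij}=0$ and $\sum_{ij}(\vec{\omega}^{*}\cdot\vec{J}^{h})_{ij}=\vec{\omega}^{*}\cdot\sum_{ij}(\vec{J}^{h})_{ij}=0$ by Lemma~\ref{lemma-discrte-nj-zero}. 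This produces the unique $p\in\set{1_{\Omega^{h}}}^{\perp}$. Defining $(\vec{U},\vec{v},\vec{\omega})$ by subtracting the gradient part in \eqref{eq-main-discrete-decomposition2} then gives the decomposition, and its uniqueness is inherited from that of $p$: the gradient part depends on $p$ only through $\dG p$, $\sum_{kl}(p\dG H)_{kl}h^{2}$, and $\sum_{kl}(p\vec{J}^{h})_{kl}h^{2}$, each of which is unchanged when $p$ is replaced by $p+c$ (since $\dG$ annihilates constants and $\sum_{ij}(\dG H)_{ij}=\sum_{ij}(\vec{J}^{h})_{ij}=0$), so fixing $p\in\set{1_{\Omega^{h}}}^{\perp}$ pins everything down.

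The orthogonality rests on one intermediate identity, the discrete Heaviside form of incompressibility and non-penetration,
\[
-\dD(H\vec{U})+\vec{v}\cdot\dG H+\vec{\omega}\cdot\vec{J}^{h}=0\quad\text{in}\quad\Omega^{h}.
\]
I would prove this by substituting $\vec{U}=\vec{U}^{*}-\tfrac1\rho\dG p$, $\vec{v}=\vec{v}^{*}-\tfrac1m\sum(p\dG H)h^{2}$, and $\vec{\omega}=\vec{\omega}^{*}-\bI^{-1}\sum(p\vec{J}^{h})h^{2}$ into the left-hand side; the terms carrying $p$ reassemble into $\dD(\tfrac{H}{\rho}\dG p)-\dG H\cdot\tfrac1m\sum(p\dG H)h^{2}-\vec{J}^{h}\cdot\bI^{-1}\sum(p\vec{J}^{h})h^{2}=-L^{h}p$, so the whole expression equals $\bigl(-\dD(H\vec{U}^{*})+\vec{v}^{*}\cdot\dG H+\vec{\omega}^{*}\cdot\vec{J}^{h}\bigr)-L^{h}p$, which is zero because $p$ solves \eqref{eq-Lp-equation}. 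With this in hand I would expand the $\ip{\cdot,\cdot}_{E_{h}}$ pairing of $(\vec{U},\vec{v},\vec{\omega})$ with the gradient part using \eqref{eq-Eh-innerproduct}: the weight $\rho$ cancels $\tfrac1\rho$ in the fluid term, Lemma~\ref{lemma-DInt-by-Part} applied to $H\vec{U}$ turns $\sum (H\vec{U})\cdot\dG p\,h^{2}$ into $-\sum(\dD(H\vec{U}))p\,h^{2}$, and $\bI$ cancels $\bI^{-1}$ in the angular term, so the pairing collapses to $\tfrac12\sum_{ij}p_{ij}\bigl(-\dD(H\vec{U})+\vec{v}\cdot\dG H+\vec{\omega}\cdot\vec{J}^{h}\bigr)_{ij}h^{2}$, which vanishes by the identity just established.

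The main obstacle is purely the algebraic bookkeeping behind that intermediate identity: one must carry the variable coefficient $H/\rho$ correctly through the discrete divergence and confirm that the two nonlocal boundary terms built from $\dG H$ and $\vec{J}^{h}$ recombine exactly into $L^{h}p$. This is not a genuine difficulty, since $L^{h}$ was defined in \eqref{eq-numerical-scheme} precisely so that these terms match, and the symmetry of $\bI^{-1}$ already exploited in Lemma~\ref{lemma-SpSemiD-Lh} is what makes the angular contributions pair up; the only point requiring a little care is that $\sum_{ij}(\vec{J}^{h})_{ij}=0$, which is needed both for the compatibility check and for the final collapse and is supplied by Lemma~\ref{lemma-discrte-nj-zero}.
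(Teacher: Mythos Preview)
Your proposal is correct and follows essentially the same route as the paper's proof: verify the compatibility condition for \eqref{eq-Lp-equation} via Lemmas~\ref{lemma-DInt-by-Part} and~\ref{lemma-discrte-nj-zero}, invoke Theorem~\ref{thm-solvability} for the unique $p$, substitute the decomposition back into \eqref{eq-Lp-equation} to obtain $-\dD(H\vec{U})+\vec{v}\cdot\dG H+\vec{\omega}\cdot\vec{J}^{h}=0$, and then use this identity together with discrete integration by parts to kill the $E_{h}$-pairing. Your additional remark that the gradient triple is invariant under $p\mapsto p+c$ is a nice explicit justification for the uniqueness of the decomposition that the paper leaves implicit.
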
 \begin{proof} Lemmas \ref{lemma-DInt-by-Part} and
\ref{lemma-discrte-nj-zero} show the solvability condition 
\begin{multline*}
\sum_{(x_{i},y_{j})\in\Omega^{h}}\left(-\dD(H\vec{U}^{*})+\vec{v}^{*}\cdot\dG H+\vec{\omega}^{*}\cdot\vec{J}^{h}\right)_{ij}\\
=\int(H\vec{U}^{*}\cdot\dG1_{\Omega^{h}})\, d\Omega^{h}+\vec{v}^{*}\cdot\int(\dG H)\, d\Omega^{h}+\vec{\omega}^{*}\cdot\int\vec{J}^{h}\, d\Omega^{h}=0.
\end{multline*}
Then, Theorem \ref{thm-solvability} verifies the existence and uniqueness
of $p$ in $\set{1_{\Omega^{h}}}^{\perp}$. With such $p$, we decompose
$(\vec{U}^{*},\vec{v}^{*},\vec{w}^{*})$ as 
\[
(\vec{U}^{*},\vec{v}^{*},\vec{\omega}^{*})=(\vec{U},\vec{v},\vec{\omega})+\left(\frac{1}{\rho}\dG p,\;\frac{1}{m}\sum_{k,l}(p\dG H)_{kl}\, h^{2},\;\bI^{-1}\sum_{k,l}(p\vec{J}^{h})_{kl}\, h^{2}\right).
\]
Applying the decomposition to \eqref{eq-Lp-equation} shows 
\[
-\dD(H\vec{U})+\dG H\cdot\vec{v}+\vec{J}^{h}\cdot\vec{\omega}=0\quad\text{in}\quad\Omega^{h}.
\]
Using the identity above and Lemma \ref{lemma-DInt-by-Part}, we have
\begin{align*}
& \Ip{(\vec{U},\vec{v},\vec{\omega}),\left(\frac{1}{\rho}\dG p,\;\frac{1}{m}\sum_{k,l}(p\dG H)_{kl}\, h^{2},\;\bI^{-1}\sum_{k,l}(p\vec{J}^{h})_{kl}\, h^{2}\right)}_{E_{h}}\\
& \quad=\int\frac{1}{2}\rho H\vec{U}\cdot\left(\frac{1}{\rho}\dG p\right)\, d\Omega^{h}+\frac{1}{2}m\vec{v}\cdot\left(\frac{1}{m}\int p\dG H\, d\Omega^{h}\right)+\frac{1}{2}\vec{\omega}\cdot\left(\int p\vec{J}^{h}\, d\Omega^{h}\right)\\
& \quad=\frac{1}{2}\int p(-\dD(H\vec{U})+\vec{v}\cdot\dG H+\vec{\omega}\cdot\vec{J}^{h})\, d\Omega^{h}=0.
\end{align*}
This shows the orthogonality of the decomposition with respect to
the inner product \eqref{eq-Eh-innerproduct}. The uniqueness of the
decomposition is verified from Theorem \ref{thm-solvability}, which
completes the proof. \end{proof}

The following theorem demonstrates that the discrete projection $(\vec{U},\vec{v},\vec{\omega})$
of $(\vec{U}^{*},\vec{v}^{*},\vec{\omega}^{*})$ is stable in the
sense that it does not increase the kinetic energy:

\begin{theorem} \label{thm-discrete-energy} Assume the kinetic energy
	of a triple $(\vec{U}^{*},\vec{v}^{*},\vec{\omega}^{*})$ with $\vec{U}^{*}=(U_{x}^{*},U_{y}^{*})$
	is discretized as 
	\begin{align*}
	E_{h} & =\frac{1}{2}\int(\rho H\vec{U}^{*}\cdot\vec{U}^{*})\, d\Omega^{h}+\frac{1}{2}m\vec{v}^{*}\cdot\vec{v}^{*}+\frac{1}{2}\vec{\omega}^{*}\cdot\bI\vec{\omega}^{*}\\
	& =\frac{1}{2}\sum_{i,j}\left((\rho H(U_{x}^{*})^{2})_{i+\frac{1}{2},j}+(\rho H(U_{y}^{*})^{2})_{i,j+\frac{1}{2}}\right)h^{2}+\frac{1}{2}m\abs{\vec{v}^{*}}^{2}+\frac{1}{2}\vec{\omega}^{*}\bI\vec{\omega}^{*}.
	\end{align*}
	If the system $(\vec{U}^{*},\vec{v}^{*},\vec{\omega}^{*})$ is projected
	into $(\vec{U},\vec{v},\vec{\omega})$ given by 
	\[
	(\vec{U},\vec{v},\vec{\omega})=(\vec{U}^{*},\vec{v}^{*},\vec{\omega}^{*})-\left(\frac{1}{\rho}\dG p,\;\frac{1}{m}\sum_{k,l}(p\dG H)_{kl}\, h^{2},\;\bI^{-1}\sum_{k,l}(p\vec{J}^{h})_{kl}\, h^{2}\right)
	\]
	where $p\in\{1_{\Omega^{h}}\}^{\perp}$ is the solution to the problem
	\eqref{eq-Lp-equation}. 
	Then, the discrete projection is stable in the sense $E_{h}(\vec{U}^{*},\vec{v}^{*},\vec{\omega}^{*})\ge E_{h}(\vec{U},\vec{v},\vec{\omega})$.
\end{theorem}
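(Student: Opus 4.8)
The plan is to recognize that the discretized energy $E_h$ is exactly the squared norm induced by the inner product \eqref{eq-Eh-innerproduct}, so that the whole statement reduces to a discrete Pythagorean identity once the orthogonality of the decomposition is available. In other words, this will be the verbatim discrete mirror of the continuous energy estimate in Theorem~\ref{thm-energyEstimate}, with Theorem~\ref{corollary-main} playing the role that \eqref{eq-decomposition-orthogonality} played in the continuous setting.

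First I would introduce shorthand for the gradient part of the decomposition, writing
\[
(\vec{U}_p,\vec{v}_p,\vec{\omega}_p) := \left(\frac{1}{\rho}\dG p,\;\frac{1}{m}\sum_{k,l}(p\dG H)_{kl}\,h^{2},\;\bI^{-1}\sum_{k,l}(p\vec{J}^{h})_{kl}\,h^{2}\right),
\]
so that, by hypothesis, $(\vec{U}^{*},\vec{v}^{*},\vec{\omega}^{*}) = (\vec{U},\vec{v},\vec{\omega}) + (\vec{U}_p,\vec{v}_p,\vec{\omega}_p)$. The next step is to observe that the prescribed discretization of the kinetic energy is precisely the diagonal of the inner product, namely $E_h(\vec{U}^{*},\vec{v}^{*},\vec{\omega}^{*}) = \langle (\vec{U}^{*},\vec{v}^{*},\vec{\omega}^{*}),(\vec{U}^{*},\vec{v}^{*},\vec{\omega}^{*})\rangle_{E_h}$; this is just a matter of matching the two definitions term by term. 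I would then expand this quantity by bilinearity of \eqref{eq-Eh-innerproduct} into the self-energy $E_h(\vec{U},\vec{v},\vec{\omega})$, twice the cross term $\langle (\vec{U},\vec{v},\vec{\omega}),(\vec{U}_p,\vec{v}_p,\vec{\omega}_p)\rangle_{E_h}$, and the self-energy $E_h(\vec{U}_p,\vec{v}_p,\vec{\omega}_p)$.

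The key step is that the cross term vanishes, which is exactly the orthogonality already established in Theorem~\ref{corollary-main}. It then remains only to verify that the self-energy of the gradient part is nonnegative. This is immediate: since $\rho>0$, each edge Heaviside weight satisfies $H\ge 0$, $m>0$, and the inertia tensor $\bI$ is symmetric positive definite, each of the three summands defining $E_h(\vec{U}_p,\vec{v}_p,\vec{\omega}_p)$ is nonnegative, whence $E_h(\vec{U}_p,\vec{v}_p,\vec{\omega}_p)\ge 0$. Combining these facts gives
\[
E_h(\vec{U}^{*},\vec{v}^{*},\vec{\omega}^{*}) = E_h(\vec{U},\vec{v},\vec{\omega}) + E_h(\vec{U}_p,\vec{v}_p,\vec{\omega}_p) \ge E_h(\vec{U},\vec{v},\vec{\omega}),
\]
which is the asserted stability.

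I do not expect any genuine obstacle here. All of the analytic content — the cancellation of the cross term — has already been isolated in Theorem~\ref{corollary-main}, and the remaining ingredients are the sign conditions on $\rho$, $H$, $m$, and $\bI$ that hold by assumption. The only point that warrants a line of care is confirming that $E_h$ coincides with $\langle\cdot,\cdot\rangle_{E_h}$ evaluated on the diagonal, so that the Pythagorean expansion is legitimate; once that identification is made, the estimate is a one-line consequence of orthogonality and positivity.
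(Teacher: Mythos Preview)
Your proposal is correct and follows essentially the same approach as the paper: identify $E_h$ with the diagonal of the inner product $\langle\cdot,\cdot\rangle_{E_h}$, invoke the orthogonality of the decomposition from Theorem~\ref{corollary-main}, and conclude by nonnegativity of the gradient part. The paper's proof is more terse (a single displayed inequality), but the content is identical.
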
 \begin{proof} From the orthogonality shown in Theorem
\ref{corollary-main}, we conclude 
\begin{equation*}
E_{h}(\vec{U}^{*},\vec{v}^{*},\vec{\omega}^{*})=\Ip{(\vec{U}^{*},\vec{v}^{*},\vec{\omega}^{*}),(\vec{U}^{*},\vec{v}^{*},\vec{\omega}^{*})}_{E_{h}}
\ge\Ip{(\vec{U},\vec{v},\vec{\omega}),(\vec{U},\vec{v},\vec{\omega})}_{E_{h}}=E_{h}(\vec{U},\vec{v},\vec{\omega})
\end{equation*}
and this shows the decrease of the kinetic energy. \end{proof}

\section{Convergence analysis}\label{sec5}

In this section, we estimate the consistency and convergence of the
numerical scheme. In the previous sections, we have decomposed a given
triple $(\vec{U}^{*},\vec{v}^{*},\vec{\omega}^{*})$ using the Heaviside
function $H=\chi_{\Omega}$ as 
\begin{equation}
(\vec{U}^{*},\vec{v}^{*},\vec{\omega}^{*})=(\vec{U},\vec{v},\vec{\omega})+\left(\frac{1}{\rho}\nabla p,\;-\frac{1}{m}\int_{\Gamma}p\vec{n}\, dS,\;-\mathbb{I}^{-1}\int_{\Gamma}p\vec{J}\, dS\right)\label{eq-continuous-decomposition}
\end{equation}
by solving the equation 
\begin{equation}
\mathcal{L}(\vec{U},\vec{v},\vec{\omega}):=-\nabla\cdot(H\vec{U})+\nabla H\cdot\vec{v}+(\vec{x}-\vec{c})\times\nabla H\cdot\vec{\omega}=0\quad\text{in}\quad\mathbb{R}^{d}\label{eq-continuous-Pb}
\end{equation}
with the conditions 
\[
\left\{ \begin{aligned}\nabla\cdot\vec{U} & =0\quad\text{in}\quad\Omega\\
\vec{U}\cdot\vec{n} & =\vec{v}\cdot\vec{n}+\vec{\omega}\cdot\vec{J}\quad\text{on}\quad\Gamma=\partial\Omega.
\end{aligned}
\right.
\]
Then the numerical approximation $(\vec{U}^{h},\vec{v}^{h},\vec{\omega}^{h})$
to $(\vec{U},\vec{v},\vec{\omega})$ has been obtained by solving
the linear system 
\begin{equation}
\mathcal{L}^{h}(\vec{U}^{h},\vec{v}^{h},\vec{\omega}^{h}):=-\dD(H\vec{U}^{h})+\dG H\cdot\vec{v}^{h}+\vec{J}^{h}\cdot\vec{\omega}^{h}=0\quad\text{in}\quad\Omega^{h}\label{eq-discrete-Pb}
\end{equation}
using the decomposition 
\begin{equation}
(\vec{U}^{*},\vec{v}^{*},\vec{\omega}^{*})=(\vec{U}^{h},\vec{v}^{h},\vec{\omega}^{h})
+\left(\frac{1}{\rho}\dG p^{h},\;\frac{1}{m}\sum_{i,j,k}(p^{h}\dG H)_{ijk}h^{3},\;\mathbb{I}^{-1}\sum_{i,j,k}(p^{h}\vec{J}^{h})_{ijk}h^{3}\right)\label{eq-discrete-decomposition}
\end{equation}
where $p^{h}\in\set{1_{\Omega^{h}}}^{\perp}$.

By $(\vec{U},\vec{v},\vec{\omega})$ and $(\vec{U}^{h},\vec{v}^{h},\vec{\omega}^{h})$,
throughout this section, we denote the continuous and numerical solutions,
respectively. In the setting, let $(\vec{U}_{e},\vec{v}_{e},\vec{\omega}_{e}):=(\vec{U}-\vec{U}^{h},\vec{v}-\vec{v}^{h},\vec{\omega}-\vec{\omega}^{h})$
denote the convergence error. The consistency error for numerical
scheme is defined as 
\[
c^{h}:=\mathcal{L}^{h}(\vec{U},\vec{v},\vec{\omega})-\mathcal{L}^{h}(\vec{U}^{h},\vec{v}^{h},\vec{\omega}^{h}).
\]
In order to estimate the consistency, we need the following lemma.

\begin{lemma}\label{lemma-n-GH-J-Jh} We have the followings. 
	\begin{itemize}
		\item[(i)] For $d=2,$ we have on $C_{ij}\cap\partial\Omega$ 
		\[
		\intop_{C_{ij}\cap\Gamma}\vec{n}\, dS=-(\dG H)_{ij}h^{2}\quad\text{and}\quad\intop_{C_{ij}\cap\Gamma}\vec{J}\, dS=-\left(\vec{J}^{h}\right)_{ij}h^{2}
		\]
		where $(\vec{J}^{h})_{ij}=(0,0)$ if $\partial C_{ij}\cap\Gamma=\emptyset$
		and $(\vec{J}^{h})_{ij}=(\frac{\vec{x}_{1}+\vec{x}_{2}}{2}-\vec{c})\times\left(\dG H\right)_{ij}$
		if $\partial C_{ij}\cap\Gamma=\set{\vec{x}_{1},\vec{x}_{2}}$. 
		\item[(ii)] For $d=3,$ we have on $C_{ijk}\cap\partial\Omega$ 
		\[
		\intop_{C_{ijk}\cap\Gamma}\vec{n}\, dS=-(\dG H)_{ijk}h^{3}\quad\text{and}\quad\intop_{C_{ijk}\cap\Gamma}\vec{J}\, dS=-(\vec{J}^{h})_{ijk}h^{3}+h^{3}\vec{\varepsilon}_{ijk}
		\]
		where $(\vec{J}^{h})_{ijk}=(0,0,0)$ if $\partial C_{ijk}\cap\Gamma=\emptyset$
		and $(\vec{J}^{h})_{ijk}=(\vec{x}_{ijk}-\vec{c})\times(\dG H)_{ijk}$
		if $\partial C_{ijk}\cap\Gamma\neq\emptyset,$ and 
		\begin{equation}
		h^{3}\vec{\varepsilon}_{ijk}=\intop_{C_{ijk}\cap\Gamma}(\vec{x}-\vec{x}_{ijk})\times\vec{n}\, dS.\label{eq-J-error}
		\end{equation}
		
		\item[(iii)] For $d=2,3$ we have on $\Gamma=\partial\Omega$ 
		\begin{equation}
		\Abs{\int_{\Gamma}p\vec{n}\, dS+\sum_{\vec{x}_{h}\in\Omega_{\Gamma}^{h}}(pGH)_{\vec{x}_{h}}h^{d}}\le\frac{\sqrt{d}\,\abs{\Gamma}\,\norm{\nabla p}_{L^{\infty}}}{2}\, h,\label{eq-n-nh-error}
		\end{equation}
		\begin{equation}
		\Abs{\int_{\Gamma}p\vec{J}\, dS+\sum_{\vec{x}_{h}\in\Omega_{\Gamma}^{h}}(p\vec{J}^{h})_{\vec{x}_{h}}h^{d}}\le\frac{\sqrt{d}\,\abs{\Gamma}\,\bigl((\diam\Gamma)\,\norm{\nabla p}_{L^{\infty}}+\delta_{d,3}\norm{p}_{L^{\infty}}\bigr)}{2}\, h,\label{eq-J-Jh-error}
		\end{equation}
		where $\delta$ is the Kronecker delta symbol. 
	\end{itemize}
\end{lemma}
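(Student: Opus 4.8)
The plan is to reduce every assertion to a single local identity on each near-boundary control volume, obtained by applying the Gauss--Green theorem (equivalently, the local form of Lemma~\ref{lemma-zero-nj}) to the region $C_{ij}\cap\Omega$, and then to sum these local identities to obtain the global estimates in (iii). Throughout I work in the regime where the grid is fine enough relative to $\Gamma$ so that each cell boundary $\partial C_{ij}$ meets $\Gamma$ in exactly two points (for $d=2$), consistent with the definition of $\vec{\tilde{x}}_{ij}$.

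For (i), I would first treat the normal integral. Since $\oint_{\partial D}\vec{n}\, dS=0$ for any bounded region $D$, applying this to $D=C_{ij}\cap\Omega$ splits the boundary into the portions of the four cell edges lying inside $\Omega$ and the arc $C_{ij}\cap\Gamma$. On $E_{i+\frac12,j}$ the outward normal of $C_{ij}$ is $(1,0)$ and the length of the part inside $\Omega$ equals $h\,H_{i+\frac12,j}$ by the definition of the edge Heaviside value; collecting the four edges gives the contribution $h^{2}(\dG H)_{ij}$, so that $\int_{C_{ij}\cap\Gamma}\vec{n}\, dS=-(\dG H)_{ij}h^{2}$. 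For the $\vec{J}$-integral I would write $\vec{x}-\vec{c}=(\vec{x}-\vec{\tilde{x}}_{ij})+(\vec{\tilde{x}}_{ij}-\vec{c})$ and use bilinearity of the cross product. The constant second piece pulls out, giving $(\vec{\tilde{x}}_{ij}-\vec{c})\times\int_{C_{ij}\cap\Gamma}\vec{n}\, dS=-(\vec{J}^{h})_{ij}h^{2}$ by the normal identity and the definition of $\vec{J}^{h}$. Thus (i) reduces to showing that the first-moment term $\int_{C_{ij}\cap\Gamma}(\vec{x}-\vec{\tilde{x}}_{ij})\times\vec{n}\, dS$ vanishes.

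This vanishing is the crux of the two-dimensional statement, and it is exactly where the choice $\vec{\tilde{x}}_{ij}=\frac12(\vec{x}_{1}+\vec{x}_{2})$ enters. I would close the arc $C_{ij}\cap\Gamma$ (running between its endpoints $\vec{x}_{1},\vec{x}_{2}$ on $\partial C_{ij}$) by the straight chord from $\vec{x}_{2}$ to $\vec{x}_{1}$, apply the identity $\oint_{\partial R}(\vec{x}-\vec{\tilde{x}}_{ij})\times\vec{n}\, dS=0$ (the same Gauss--Green computation as in Lemma~\ref{lemma-zero-nj}, with $\vec{\tilde{x}}_{ij}$ in place of $\vec{c}$) to the enclosed region $R$, and observe that the chord contribution vanishes: parametrizing the chord linearly, $\vec{x}-\vec{\tilde{x}}_{ij}$ is odd about the midpoint while $\vec{n}$ is constant, so its integral is zero. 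Hence the arc integral vanishes as well, proving (i). In three dimensions this cancellation no longer occurs, since a surface patch cannot be closed by a flat piece in the same way, and the leftover first moment is precisely the quantity $h^{3}\vec{\varepsilon}_{ijk}$ of \eqref{eq-J-error}; the normal identity and the splitting of $\vec{x}-\vec{c}$ carry over verbatim, yielding (ii).

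For (iii) I would sum the local identities of (i)/(ii) over the near-boundary cells, using that these tile $\bR^{d}$ and cover $\Gamma$, so $\sum\abs{C_{ij}\cap\Gamma}=\abs{\Gamma}$. Writing $p_{ij}=p(\vec{x}_{ij})$ for the nodal value at the cell center and invoking $\int_{C_{ij}\cap\Gamma}\vec{n}\, dS=-(\dG H)_{ij}h^{d}$, the quantity to estimate becomes $\sum_{ij}\int_{C_{ij}\cap\Gamma}\bigl(p(\vec{x})-p(\vec{x}_{ij})\bigr)\vec{n}\, dS$; bounding $\abs{p(\vec{x})-p(\vec{x}_{ij})}\le\frac{\sqrt{d}}{2}h\,\norm{\nabla p}_{L^{\infty}}$ by the half-diagonal of the cell and using $\abs{\vec{n}}=1$ gives \eqref{eq-n-nh-error}. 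The same scheme proves \eqref{eq-J-Jh-error}, with two differences: the factor $\abs{\vec{J}}\le\abs{\vec{x}-\vec{c}}\le\diam\Gamma$ produces the $\diam\Gamma$ weight, and for $d=3$ the residuals $h^{3}\vec{\varepsilon}_{ijk}$ from (ii) contribute an extra $\delta_{d,3}\sum_{ij}p_{ij}h^{3}\vec{\varepsilon}_{ij}$, bounded by $\norm{p}_{L^{\infty}}\sum\abs{h^{3}\vec{\varepsilon}_{ij}}\le\norm{p}_{L^{\infty}}\frac{\sqrt{d}}{2}h\sum\abs{C_{ij}\cap\Gamma}$ via $\abs{\vec{x}-\vec{x}_{ijk}}\le\frac{\sqrt{d}}{2}h$. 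The main obstacle is the two-dimensional moment cancellation in (i); once that midpoint/chord argument is in place, (ii) and (iii) are routine Taylor and covering estimates.
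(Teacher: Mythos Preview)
Your proposal is correct and follows the same overall strategy as the paper: apply the closed-boundary identities $\oint\vec{n}\,dS=0$ and $\oint\vec{J}\,dS=0$ (the local version of Lemma~\ref{lemma-zero-nj}) to $C_{ij}\cap\Omega$, then sum and Taylor-bound for (iii). The one place where you and the paper differ is in justifying the two-dimensional $\vec{J}$ identity. The paper transfers the integral to the complementary piece $\partial C_{ij}\cap\Omega$ and simply asserts $\int_{\partial C_{ij}\cap\Omega}\vec{J}\,dS=(\vec{J}^{h})_{ij}h^{2}$; you instead isolate the first moment $\int_{C_{ij}\cap\Gamma}(\vec{x}-\vec{\tilde{x}}_{ij})\times\vec{n}\,dS$ and kill it by closing the arc with its chord and using midpoint antisymmetry. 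Both are valid, and in fact equivalent: in two dimensions $(\vec{x}-\vec{p})\times\vec{n}\,dS=-(\vec{x}-\vec{p})\cdot d\vec{x}=-\tfrac12\,d\abs{\vec{x}-\vec{p}}^{2}$ is exact, so the moment over \emph{any} curve joining $\vec{x}_{1}$ to $\vec{x}_{2}$ vanishes when $\vec{p}$ is their midpoint. Your chord argument makes this vanishing explicit and transparently explains why the midpoint choice is the right one, whereas the paper's version is terser but leaves that computation to the reader.
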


\begin{proof} 
	\begin{itemize}
		\item[(i)] Let $\vec{n}$ be the outward unit normal vector of $\partial\left(C_{ij}\cap\Omega\right)$.
		Then the same argument used for the proof of Lemma \ref{lemma-zero-nj}
		shows 
		\[
		\intop_{\partial\left(C_{ij}\cap\Omega\right)}\vec{n}\, dS=0\quad\text{and}\quad\intop_{\partial\left(C_{ij}\cap\Omega\right)}\vec{J}\, dS=0.
		\]
		These imply 
		\[
		\intop_{C_{ij}\cap\Gamma}\vec{n}\, dS=-\intop_{\partial C_{ij}\cap\Omega}\vec{n}\, dS=-(\dG H)_{ij}\, h^{2}
		\]
		and 
		\[
		\intop_{C_{ij}\cap\Gamma}\vec{J}\, dS=-\intop_{\partial C_{ij}\cap\Omega}\vec{J}\, dS=-(\vec{J}^{h})_{ij}\, h^{2},
		\]
		where $(\vec{J}^{h})_{ij}=(0,0)$ if $\partial C_{ij}\cap\Gamma=\emptyset$
		and $(\vec{J}^{h})_{ij}=(\frac{\vec{x}_{1}+\vec{x}_{2}}{2}-\vec{c})\times(\dG H)_{ij}$
		if $\partial C_{ij}\cap\Gamma=\set{\vec{x}_{1},\vec{x}_{2}}$.
		\item[(ii)] Using the similar argument used for the proof of (i), one can show
		\[
		\intop_{C_{ijk}\cap\Gamma}\vec{n}\, dS=-(\dG H)_{ijk}\, h^{3}\quad\text{and}\quad\intop_{C_{ijk}\cap\Gamma}\vec{J}\, dS=-(\vec{J}^{h})_{ijk}\, h^{3}+h^{3}\vec{\varepsilon}_{ijk}
		\]
		with $\vec{\varepsilon}_{ijk}$ given in \eqref{eq-J-error}.
		\item[(iii)] Using {(i)} and {(ii)}, we have 
		\begin{align*}
		\Abs{\int_{\Gamma}p\vec{n}\, dS+\sum_{\vec{x}_{h}\in\Omega_{\Gamma}^{h}}(pGH)_{\vec{x}_{h}}h^{d}} & \le\sum_{\vec{x}_{h}\in\Omega_{\Gamma}^{h}}\;\;\int_{\Gamma\cap C_{\vec{x}_{h}}}\abs{p(\vec{x})-p(\vec{x}_{h})}\, dS\\
		& \le\sum_{\vec{x}_{h}\in\Omega_{\Gamma}^{h}}\frac{\sqrt{d}\,\norm{\nabla p}_{L^{\infty}}}{2}\, h\int_{\Gamma\cap C_{\vec{x}_{h}}}\, dS\\
		& =\frac{\sqrt{d}\,\abs{\Gamma}\,\norm{\nabla p}_{L^{\infty}}}{2}\, h.
		\end{align*}
		Here, we used that for $\vec{x}_{h}\in\Omega_{\Gamma}^{h}$ and $\vec{x}\in\Gamma\cap C_{\vec{x}_{h}}$,
		we have 
		\[
		\abs{p(\vec{x})-p(\vec{x}_{h})}\le\abs{\vec{x}-\vec{x}_{h}}\,\norm{\nabla p}_{L^{\infty}}\le\frac{\sqrt{d}\,\norm{\nabla p}_{L^{\infty}}}{2}\, h.
		\]
		So, we have shown the inequality \eqref{eq-n-nh-error}. To show \eqref{eq-J-Jh-error},
		we apply (i) and (ii) to obtain 
		\begin{align*}
		\int_{\Gamma}p\vec{J}\, dS+\sum_{\vec{x}_{h}\in\Omega_{\Gamma}^{h}}\left(p\vec{J}^{h}\right)_{\vec{x}_{h}}h^{d} & =\sum_{\vec{x}_{h}\in\Omega_{\Gamma}^{h}}\;\;\int_{\Gamma\cap C_{\vec{x}_{h}}}(p(\vec{x})-p(\vec{x}_{h}))\vec{J}dS\\
		& \qquad+\delta_{d,3}\sum_{\vec{x}_{h}\in\Omega_{\Gamma}^{h}}\;\;\int_{\Gamma\cap C_{\vec{x}_{h}}}p(\vec{x}_{h})(\vec{x}-\vec{x}_{h})\times\vec{n}\, dS.
		\end{align*}
		Applying the similar argument to obtain \eqref{eq-n-nh-error} leads
		to the inequality \eqref{eq-J-Jh-error}. 
	\end{itemize}
\end{proof}

\begin{theorem}[Consistency error]\label{them-consistency} Let $(\vec{U},\vec{v},\vec{\omega})$
	be the continuous solution to \eqref{eq-continuous-decomposition}
	- \eqref{eq-continuous-Pb} and $(\vec{U}^{h},\vec{v}^{h},\vec{\omega}^{h})$
	be the numerical solution to \eqref{eq-discrete-Pb} - \eqref{eq-discrete-decomposition}.
	Then we have the consistency error $c^{h}=\mathcal{L}^{h}(\vec{U},\vec{v},\vec{\omega})-\mathcal{L}^{h}(\vec{U}^{h},\vec{v}^{h},\vec{\omega}^{h})$
	as 
	\begin{equation}
	(c^{h})_{\vec{x}_{h}}=\begin{cases}
	O(h^{2}), & \quad\text{if}\quad\vec{x}_{h}\in\Omega_{\circ}^{h}\\
	O(1), & \quad\text{if}\quad\vec{x}_{h}\in\Omega_{\Gamma}^{h}.
	\end{cases}\label{eq-consistency-estimate}
	\end{equation}
\end{theorem}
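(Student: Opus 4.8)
The plan is to treat the two regimes of \eqref{eq-consistency-estimate} separately, exploiting that $c^{h}=\mathcal{L}^{h}(\vec{U},\vec{v},\vec{\omega})$ because $\mathcal{L}^{h}(\vec{U}^{h},\vec{v}^{h},\vec{\omega}^{h})=0$ by \eqref{eq-discrete-Pb}, while the continuous solution obeys $\mathcal{L}(\vec{U},\vec{v},\vec{\omega})=0$ by \eqref{eq-continuous-Pb}. By the regularity theory of Section~\ref{sec3} (Theorem~\ref{theorem-boundary-regularity-continous}) the continuous velocity $\vec{U}$ is as smooth as needed; I extend it to a neighborhood of $\overline{\Omega}$ by a bounded extension operator so that $\norm{\nabla\vec{U}}_{L^{\infty}}$ (and one more derivative) is controlled up to and slightly beyond $\Gamma$, which is what makes the edge-midpoint values appearing in $\dD(H\vec{U})$ meaningful even on partially wet edges.

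For an interior node $\vec{x}_{h}\in\Omega_{\circ}^{h}$ every surrounding edge is fully wet, so $H\equiv1$ there and hence $(\dG H)_{\vec{x}_{h}}=0$ and $(\vec{J}^{h})_{\vec{x}_{h}}=0$. Thus $(c^{h})_{\vec{x}_{h}}=-\dD\vec{U}$, the ordinary central-difference divergence at the node. A Taylor expansion of the edge-midpoint values about the node shows that the symmetric differences reproduce $\nabla\cdot\vec{U}$ with an $O(h^{2})$ truncation error (using the smoothness of $\vec{U}$ from the regularity theory); since $\nabla\cdot\vec{U}=0$ in $\Omega$, this gives $(c^{h})_{\vec{x}_{h}}=O(h^{2})$.

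The near-boundary case is where the cancellation must be made explicit, because each of $\dD(H\vec{U})$ and $(\dG H)\cdot\vec{v}$ is individually only $O(h^{-1})$. First I apply the Gauss--Green theorem on $C_{ij}\cap\Omega$ together with $\nabla\cdot\vec{U}=0$ to balance the flux through the grid edges against the flux through $C_{ij}\cap\Gamma$; then I rewrite $\int_{C_{ij}\cap\Gamma}\vec{U}\cdot\vec{n}\,dS$ using the non-penetration condition $\vec{U}\cdot\vec{n}=\vec{v}\cdot\vec{n}+\vec{\omega}\cdot\vec{J}$ and Lemma~\ref{lemma-n-GH-J-Jh}(i)--(ii), which identify $\int_{C\cap\Gamma}\vec{n}\,dS$ and $\int_{C\cap\Gamma}\vec{J}\,dS$ with $-(\dG H)h^{d}$ and $-(\vec{J}^{h})h^{d}$ (plus the $h^{d}\vec{\varepsilon}_{ijk}$ correction when $d=3$). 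Combining these identities, the two boundary terms $(\dG H)\cdot\vec{v}+\vec{J}^{h}\cdot\vec{\omega}$ of $\mathcal{L}^{h}$ exactly reproduce $h^{-d}$ times the \emph{exact} grid-edge flux, so that $(c^{h})_{\vec{x}_{h}}=h^{-d}\bigl(\Phi^{\mathrm{exact}}-\Phi^{\mathrm{disc}}\bigr)+\delta_{d,3}\,\vec{\omega}\cdot\vec{\varepsilon}_{ijk}$, where $\Phi^{\mathrm{exact}}=\int_{\partial C_{ij}\cap\Omega}\vec{U}\cdot\vec{n}\,dS$ and $\Phi^{\mathrm{disc}}=h^{d}\dD(H\vec{U})$ is its midpoint-rule surrogate.

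It then remains to bound the two pieces. For the quadrature difference I note that on each (partially) wet edge or face the discrete term uses the value of $\vec{U}$ at the center of the \emph{full} edge/face times the wetted measure, while the exact flux integrates $\vec{U}$ over the wetted part only; since the wetted measure is at most $h^{d-1}$ and $\abs{\vec{U}(\vec{x})-\vec{U}(\text{center})}\le Ch\norm{\nabla\vec{U}}_{L^{\infty}}$ for any $\vec{x}$ on the edge/face, each of the finitely many edges/faces contributes at most $O(h^{d})$, whence $\Phi^{\mathrm{exact}}-\Phi^{\mathrm{disc}}=O(h^{d})$ and $h^{-d}(\Phi^{\mathrm{exact}}-\Phi^{\mathrm{disc}})=O(1)$. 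For the $d=3$ remainder, \eqref{eq-J-error} gives $\abs{h^{3}\vec{\varepsilon}_{ijk}}\le\norm{\vec{x}-\vec{x}_{ijk}}_{L^{\infty}(C_{ijk})}\,\area(C_{ijk}\cap\Gamma)=O(h)\cdot O(h^{2})=O(h^{3})$, so $\vec{\omega}\cdot\vec{\varepsilon}_{ijk}=O(1)$; adding the two contributions yields $(c^{h})_{\vec{x}_{h}}=O(1)$ on $\Omega_{\Gamma}^{h}$. The main obstacle is this near-boundary step: the apparent $O(h^{-1})$ blow-up of the individual difference quotients has to be absorbed by the discrete--continuous flux balance supplied by Lemma~\ref{lemma-n-GH-J-Jh}, and care is needed that the geometric error of replacing wetted centroids by full-cell centers is genuinely $O(h)$ in the integrand even when $\Gamma$ cuts a cell irregularly.
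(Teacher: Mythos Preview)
Your proposal is correct and follows essentially the same route as the paper: reduce $c^{h}$ to $\mathcal{L}^{h}(\vec{U},\vec{v},\vec{\omega})$, apply the divergence theorem on $C\cap\Omega$ together with $\nabla\cdot\vec{U}=0$ and the non-penetration condition, invoke Lemma~\ref{lemma-n-GH-J-Jh} to convert the $\Gamma$-flux into the discrete terms $(\dG H)\cdot\vec{v}+\vec{J}^{h}\cdot\vec{\omega}$, and arrive at $h^{d}(c^{h})_{\vec{x}_{h}}=\int_{\partial C\cap\Omega}\vec{U}\cdot\vec{n}\,dS-h^{d}\dD(H\vec{U})+\delta_{d,3}h^{3}\vec{\varepsilon}\cdot\vec{\omega}$, which is then estimated edge-by-edge via Taylor expansion. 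The only cosmetic difference is that you treat the interior case by a direct second-order consistency argument for $\dD$, whereas the paper folds it into the same flux identity (where the symmetric midpoint rule on a fully wet edge yields $O(h^{d+2})$ per cell); both give $(c^{h})_{\vec{x}_{h}}=O(h^{2})$ on $\Omega_{\circ}^{h}$.
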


\begin{proof} We give the proof for $d=2;$ the case for $d=3$ is
	shown in the similar argument. For each cell $C_{ij}$, the divergence
	theorem gives 
	\begin{equation}
	0=\intop_{C_{ij}\cap\Omega}\nabla\cdot\vec{U}\, dx=\intop_{\partial(C_{ij}\cap\Omega)}\vec{U}\cdot\vec{n}\, dS
	=\intop_{\partial C_{ij}\cap\Omega}\vec{U}\cdot\vec{n}\, dS+\intop_{C_{ij}\cap\Gamma}\left(\vec{v}\cdot\vec{n}+\vec{\omega}\cdot\vec{J}\right)\, dS.\label{eq-nj-expresion}
	\end{equation}
	Let $\vec{U}=(u,v)$. On $\partial C_{ij}\cap\Omega,$ we have 
	\begin{multline*}
	\intop_{\partial C_{ij}\cap\Omega}\vec{U}\cdot\vec{n}\, dS=\intop_{E_{i+\frac{1}{2},j}\cap\Omega}u(x_{i+\frac{1}{2}},y)\, dy\;-\intop_{E_{i-\frac{1}{2},j}\cap\Omega}u(x_{i-\frac{1}{2}},y)\, dy\\
	+\intop_{E_{i,j+\frac{1}{2}}\cap\Omega}v(x,y_{i+\frac{1}{2}})\, dx\;-\intop_{E_{i,j-\frac{1}{2}}\cap\Omega}v(x,y_{i-\frac{1}{2}})\, dx.
	\end{multline*}
	Applying the Taylor series expansion at the middle point of the edge,
	we estimate the integrals on the right-hand side 
	\begin{align*}
	\intop_{E_{i+\frac{1}{2},j}\cap\Omega}u(x_{i+\frac{1}{2}},y)\, dy & -hH_{i+\frac{1}{2},j}u(x_{i+\frac{1}{2}},y_{j})\\[-12pt]
	& =\intop_{E_{i+\frac{1}{2},j}\cap\Omega}\left(u(x_{i+\frac{1}{2}},y)-u(x_{i+\frac{1}{2}},y_{j})\right)\, dy\\
	& =u_{y}(x_{i+\frac{1}{2}},y_{j})\,\intop_{E_{i+\frac{1}{2},j}\cap\Omega}\left(y-y_{j}\right)\, dy+O(h^{3}).
	\end{align*}
	In particular, if $H_{i\pm\frac{1}{2},j}=1,$ then we have 
	\[
	\intop_{E_{i+\frac{1}{2},j}\cap\Omega}u(x_{i+\frac{1}{2}},y)\, dy\;-\intop_{E_{i-\frac{1}{2},j}\cap\Omega}u(x_{i+\frac{1}{2}},y)\, dy
	=h\, u(x_{i+\frac{1}{2}},y_{j})-h\, u(x_{i-\frac{1}{2}},y_{j})+O(h^{4}).
	\]
	We obtain the similar estimations for the other edges. Combining the
	estimations, we have 
	\begin{equation}
	\intop_{\partial C_{ij}\cap\Omega}\vec{U}\cdot\vec{n}\, dS-(\dD(H\vec{U}))_{ij}\, h^{2}=\begin{cases}
	O(h^{4}) & \mbox{if }\;(x_{i},y_{j})\in\Omega_{\circ}^{h}\\
	O(h^{2}) & \mbox{if }\;(x_{i},y_{j})\in\Omega_{\Gamma}^{h}.
	\end{cases}\label{eq-edge-estimate-2D}
	\end{equation}
	Since $\mathcal{L}^{h}(\vec{U}^{h},\vec{v}^{h},\vec{\omega}^{h})=0,$
	using Lemma \ref{lemma-n-GH-J-Jh} and Equation \eqref{eq-nj-expresion},
	we have the consistency error $(c^{h})_{ij}$ at $(x_{i},y_{j})\in\Omega^{h}$
	as 
	\begin{align*}
	h^{2}(c^{h})_{ij} & =h^{2}\left(\mathcal{L}^{h}(\vec{U},\vec{v},\vec{\omega})-\mathcal{L}^{h}(\vec{U}^{h},\vec{v}^{h},\vec{\omega}^{h})\right)_{ij}=h^{2}\left(\mathcal{L}^{h}(\vec{U},\vec{v},\vec{\omega})\right)_{ij}\\
	& =-h^{2}(\dD(H\vec{U}))_{ij}+h^{2}(\dG H)_{ij}\cdot\vec{v}+h^{2}(\vec{J}^{h})_{ij}\cdot\vec{\omega}\\
	& =-h^{2}(\dD(H\vec{U}))_{ij}-\int_{C_{ij}\cap\Gamma}(\vec{v}\cdot\vec{n}+\vec{\omega}\cdot\vec{J})\, dS\\[-6pt]
	& =\int_{\partial C_{ij}\cap\Omega}\vec{U}\cdot\vec{n}\, dS-(\dD(H\vec{U}))_{ij}h^{2}.
	\end{align*}
	Then the estimation \eqref{eq-edge-estimate-2D} shows the consistency
	error \eqref{eq-consistency-estimate} for $d=2$.
	
	Now, we consider the case for $d=3$. A similar argument used for
	$d=2$ shows that for each cell $C_{ijk}$, we have 
	\[
	\intop_{\partial C_{ijk}\cap\Omega}\vec{U}\cdot\vec{n}\, dS=-\intop_{C_{ijk}\cap\Gamma}\left(\vec{v}\cdot\vec{n}+\vec{\omega}\cdot\vec{J}\right)\, dS.
	\]
	and 
	\begin{equation}
	\intop_{\partial C_{ijk}\cap\Omega}\vec{U}\cdot\vec{n}\, dS-(\dD(H\vec{U}))_{ijk}\, h^{3}=\begin{cases}
	O(h^{5}) & \mbox{if }\;(x_{i},y_{j},z_{k})\in\Omega_{\circ}^{h}\\
	O(h^{3}) & \mbox{if }\;(x_{i},y_{j},z_{k})\in\Omega_{\Gamma}^{h}.
	\end{cases}\label{eq-edge-estimate-3D}
	\end{equation}
	Since $L^{h}(\vec{U}^{h},\vec{v}^{h},\vec{\omega}^{h})=0$, we have
	the consistency error $(c^{h})_{ijk}$ at $(x_{i},y_{j},z_{k})\in\Omega^{h}$
	as 
	\begin{align*}
	h^{3}(c^{h})_{ijk} & =h^{3}\left(\mathcal{L}^{h}(\vec{U},\vec{v},\vec{\omega})-\mathcal{L}^{h}(\vec{U}^{h},\vec{v}^{h},\vec{\omega}^{h})\right)_{ijk}\\
	& =-h^{3}(\dD(H\vec{U}))_{ijk}+h^{3}(\dG H)_{ijk}\cdot\vec{v}+h^{3}(\vec{J}^{h})_{ijk}\cdot\vec{\omega}\\
	& =-h^{3}(\dD(H\vec{U}))_{ijk}-\intop_{C_{ijk}\cap\Gamma}\left(\vec{v}\cdot\vec{n}+\vec{\omega}\cdot\vec{J}\right)\, dS+h^{3}\vec{\varepsilon}_{ijk}\cdot\vec{\omega}\\[-6pt]
	& =\intop_{\partial C_{ijk}\cap\Omega}\vec{U}\cdot\vec{n}\, dS-(\dD(H\vec{U}))_{ijk}\, h^{3}+h^{3}\vec{\varepsilon}_{ijk}\cdot\vec{\omega}.
	\end{align*}
	From \eqref{eq-J-error}, we see that $\abs{\vec{\varepsilon}_{ijk}}=O(1)$
	if $(x_{i},y_{j},z_{k})\in\Omega_{\Gamma}^{h}$ and $\vec{\varepsilon}_{ijk}=0$
	if $(x_{i},y_{j},z_{k})\in\Omega_{o}^{h}$. From the estimation \eqref{eq-edge-estimate-3D},
	we shows the consistency error \eqref{eq-consistency-estimate} for
	$d=3$, which completes the proof. \end{proof}

The proof of Theorem \ref{them-consistency} reveals that there exists
a vector field 
\[
\vec{d}^{h}=\begin{cases}
(d_{i+\frac{1}{2},j}^{h},\; d_{i,j+\frac{1}{2}}^{h}) & \mbox{for}\; d=2\\
(d_{i+\frac{1}{2},j,k}^{h},\; d_{i,j+\frac{1}{2},k}^{h},\; d_{i,j,k+\frac{1}{2}}^{h}) & \mbox{for}\; d=3
\end{cases}
\]
such that for each $\vec{x}_{h}\in\Omega^{h}$ 
\[
h^{d}\left(\dD(H\vec{d}^{h})\right)_{\vec{x}_{h}}=h^{d}\left(\mathcal{L}^{h}(\vec{U},\vec{v},\vec{\omega})\right)_{\vec{x}_{h}}.
\]
For example, $d_{i+\frac{1}{2},j}^{h}$ is given as 
\[
hH_{i+\frac{1}{2},j}\; d_{i+\frac{1}{2},j}^{h}=\intop_{E_{i+\frac{1}{2},j}\cap\Omega}\left(\vec{U}(x_{i+\frac{1}{2}},y)-\vec{U}(x_{i+\frac{1}{2}},y_{j})\right)\cdot\vec{n}\, dy
\]
and $d_{i+\frac{1}{2},j,k}^{h}$ is given as 
\begin{multline*}
h^{2}H_{i+\frac{1}{2},j,k}\; d_{i+\frac{1}{2},j,k}^{h}=\intop_{F_{i+\frac{1}{2},j,k}\cap\Omega}\Big\{\bigl(\vec{U}(x_{i+\frac{1}{2}},y,z)-\vec{U}(x_{i+\frac{1}{2}},y_{j},z_{k})\bigr)\cdot\vec{n}\\[-6pt]
-\bigl(\omega_{2}(z-z_{k})-\omega_{3}(y-y_{j})\bigr)\Big\} dydz,
\end{multline*}
where $\vec{\omega}=\left(\omega_{1},\omega_{2},\omega_{3}\right)$
and we used the fact from Lemma \ref{lemma-zero-nj} that 
\[
h^{3}\vec{\varepsilon}_{ijk}\cdot\vec{\omega}=\vec{\omega}\cdot\intop_{C_{ijk}\cap\Gamma}(\vec{x}-\vec{x}_{ijk})\times\vec{n}\, dS=-\vec{\omega}\cdot\intop_{\partial C_{ijk}\cap\Omega}(\vec{x}-\vec{x}_{ijk})\times\vec{n}\, dS.
\]

It is not difficult to see that 
\begin{equation}
\begin{aligned}d_{i+\frac{1}{2},j}^{h} & =\begin{cases}
O(h) & \mbox{if}\quad0<H_{i+\frac{1}{2},j}<1\\
O(h^{2}) & \mbox{if}\quad H_{i+\frac{1}{2},j}=1.
\end{cases}\\
d_{i+\frac{1}{2},j,k}^{h} & =\begin{cases}
O(h) & \mbox{if}\quad0<H_{i+\frac{1}{2},j,k}<1\\
O(h^{2}) & \mbox{if}\quad H_{i+\frac{1}{2},j,k}=1.
\end{cases}
\end{aligned}
\label{eq-dh}
\end{equation}
and the same holds for $d_{i,j+\frac{1}{2}}$, $d_{i,j+\frac{1}{2},k}$,
and $d_{i,j,k+\frac{1}{2}}$.

Note that from the definition of $\vec{d}^{h},$ we have 
\[
\mathcal{L}^{h}(\vec{U}-\vec{U}^{h}-\vec{d}^{h},\vec{v}-\vec{v}^{h},\vec{\omega}-\vec{\omega}^{h})=0.
\]

\begin{theorem}[Convergence error] \label{them-convergence} Let
	$(\vec{U},\vec{v},\vec{\omega})$ be a continuous solution to \eqref{eq-continuous-decomposition}
	- \eqref{eq-continuous-Pb}, and $(\vec{U}^{h},\vec{v}^{h},\vec{\omega}^{h})$
	a numerical solution to \eqref{eq-discrete-Pb} - \eqref{eq-discrete-decomposition}.
	Then we have the convergence error\\ $(\vec{U}_{e},\vec{v}_{e},\vec{\omega}_{e}):=(\vec{U}-\vec{U}^{h},\vec{v}-\vec{v}^{h},\vec{\omega}-\vec{\omega}^{h})$
	with 
	\[
	\norm{(\vec{U}_{e},\vec{v}_{e},\vec{\omega}_{e})}_{E_{h}}=\left(\frac{1}{2}\int\rho H\vec{U}_{e}\cdot\vec{U}_{e}\, d\Omega^{h}+\frac{1}{2}m\vec{v}_{e}\cdot\vec{v}_{e}+\frac{1}{2}\vec{\omega}_{e}\cdot\mathbb{I}\vec{\omega}_{e}\right)^{\frac{1}{2}}=O(h).
	\]
\end{theorem}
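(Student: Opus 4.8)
The plan is to exploit the orthogonality of the discrete decomposition (Theorem~\ref{corollary-main}) together with the consistency material already in place. First I would record that $(\vec{U},\vec{v},\vec{\omega})$ and $(\vec{U}^{h},\vec{v}^{h},\vec{\omega}^{h})$ are decompositions of the \emph{same} input $(\vec{U}^{*},\vec{v}^{*},\vec{\omega}^{*})$. Writing the continuous gradient part as $\vec{G}=\bigl(\frac{1}{\rho}\nabla p,-\frac{1}{m}\int_{\Gamma}p\vec{n}\,dS,-\bI^{-1}\int_{\Gamma}p\vec{J}\,dS\bigr)$ and the discrete one as $\vec{G}^{h}=\bigl(\frac{1}{\rho}\dG p^{h},\frac{1}{m}\sum(p^{h}\dG H)h^{d},\bI^{-1}\sum(p^{h}\vec{J}^{h})h^{d}\bigr)$, and identifying continuous fields with their samples on $E^{h}$, a componentwise check gives $(\vec{U}_{e},\vec{v}_{e},\vec{\omega}_{e})=\vec{G}^{h}-\vec{G}$. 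Let $K:=\{(\vec{W},\vec{a},\vec{b}):\mathcal{L}^{h}(\vec{W},\vec{a},\vec{b})=0\}$ be the discrete solenoidal space. The computation in the proof of Theorem~\ref{corollary-main} shows that every discrete gradient element is $\Ip{\cdot,\cdot}_{E_{h}}$-orthogonal to all of $K$, so the discrete gradient space is exactly $K^{\perp}$; in particular $\vec{G}^{h}\in K^{\perp}$. Splitting the error orthogonally as $\vec{e}_{K}+\vec{e}_{K^{\perp}}$ yields $\vec{e}_{K}=-P_{K}\vec{G}$, where $P_{K}$ is the $E_{h}$-orthogonal projection, so it suffices to estimate the two pieces separately.

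For the normal component I would use the auxiliary field $\vec{d}^{h}$ constructed after Theorem~\ref{them-consistency}, which satisfies $\mathcal{L}^{h}(\vec{U}_{e}-\vec{d}^{h},\vec{v}_{e},\vec{\omega}_{e})=0$, i.e. $(\vec{U}_{e},\vec{v}_{e},\vec{\omega}_{e})-(\vec{d}^{h},0,0)\in K$. Hence $\vec{e}_{K^{\perp}}=P_{K^{\perp}}(\vec{d}^{h},0,0)$ and $\norm{\vec{e}_{K^{\perp}}}_{E_{h}}\le\norm{(\vec{d}^{h},0,0)}_{E_{h}}$. By \eqref{eq-dh} the values of $\vec{d}^{h}$ are $O(h)$ on the $O(h^{-(d-1)})$ boundary edges and $O(h^{2})$ on the $O(h^{-d})$ interior edges, so
\[
\norm{(\vec{d}^{h},0,0)}_{E_{h}}^{2}=\tfrac{1}{2}\int\rho H\abs{\vec{d}^{h}}^{2}\,d\Omega^{h}=O(h^{3})+O(h^{4})=O(h^{3}),
\]
giving $\norm{\vec{e}_{K^{\perp}}}_{E_{h}}=O(h^{3/2})$. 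This part is routine.

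The tangential component is the crux. Since $P_{K}$ is orthogonal, $\norm{\vec{e}_{K}}_{E_{h}}=\sup_{w\in K,\,\norm{w}_{E_{h}}=1}\Ip{\vec{G},w}_{E_{h}}$, so I must show $\Ip{\vec{G},w}_{E_{h}}=O(h)\norm{w}_{E_{h}}$ for every $w=(\vec{W},\vec{a},\vec{b})\in K$; this is a discrete ``near-orthogonality'' of the continuous gradient to the discrete solenoidal space. Writing out
\[
\Ip{\vec{G},w}_{E_{h}}=\tfrac{1}{2}\int H\nabla p\cdot\vec{W}\,d\Omega^{h}-\tfrac{1}{2}\vec{a}\cdot\int_{\Gamma}p\vec{n}\,dS-\tfrac{1}{2}\vec{b}\cdot\int_{\Gamma}p\vec{J}\,dS,
\]
I would replace $\nabla p$ by the discrete gradient $\dG\bar p$ of a fixed $C^{2}$ extension $\bar p$ of the pressure, apply the discrete integration by parts of Lemma~\ref{lemma-DInt-by-Part}, and use the constraint $\dD(H\vec{W})=\dG H\cdot\vec{a}+\vec{J}^{h}\cdot\vec{b}$ (i.e. $w\in K$) to obtain $\int H\vec{W}\cdot\dG\bar p\,d\Omega^{h}=-\vec{a}\cdot\sum(\bar p\dG H)h^{d}-\vec{b}\cdot\sum(\bar p\vec{J}^{h})h^{d}$. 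Lemma~\ref{lemma-n-GH-J-Jh}(iii) identifies these discrete boundary sums with $-\int_{\Gamma}p\vec{n}\,dS$ and $-\int_{\Gamma}p\vec{J}\,dS$ up to $O(h)$; the leading terms then cancel \emph{exactly} against the $\vec{a}$- and $\vec{b}$-contributions in $\Ip{\vec{G},w}_{E_{h}}$, leaving only the quadrature remainders $O(h)(\abs{\vec{a}}+\abs{\vec{b}})$ together with $\tfrac{1}{2}\int H(\nabla p-\dG\bar p)\cdot\vec{W}\,d\Omega^{h}$. The latter is $O(h)\norm{w}_{E_{h}}$ by the discrete Cauchy--Schwarz inequality and the finite-difference consistency $\dG\bar p-\nabla p=O(h)$, while $\abs{\vec{a}},\abs{\vec{b}}\le C\norm{w}_{E_{h}}$ since $m>0$ and $\bI>0$. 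Hence $\Ip{\vec{G},w}_{E_{h}}=O(h)\norm{w}_{E_{h}}$ and $\norm{\vec{e}_{K}}_{E_{h}}=O(h)$.

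Combining the two estimates by Pythagoras, $\norm{(\vec{U}_{e},\vec{v}_{e},\vec{\omega}_{e})}_{E_{h}}^{2}=\norm{\vec{e}_{K}}_{E_{h}}^{2}+\norm{\vec{e}_{K^{\perp}}}_{E_{h}}^{2}=O(h^{2})+O(h^{3})=O(h^{2})$, which is the asserted first-order convergence; the same argument runs verbatim for $d=2,3$, only the edge/face counting in the $\vec{d}^{h}$ estimate changing. I expect the main obstacle to be the near-orthogonality estimate of the third paragraph, and within it the honest treatment of grid nodes and edge midpoints lying outside $\Omega$: the samples $\nabla p$, $\bar p(\vec{x}_{h})$ and the difference $\dG\bar p$ only make sense after extension, so one must invoke the global regularity $p\in C^{2,\alpha}(\overline{\Omega})$ (Theorem~\ref{theorem-boundary-regularity-continous}) to build a $C^{2}$ extension, control $\norm{\nabla p}_{L^{\infty}}$, and check that Lemma~\ref{lemma-n-GH-J-Jh}(iii) and the $O(h)$ truncation survive near $\Gamma$. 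The energy orthogonality of Theorem~\ref{corollary-main} is exactly what forces the large $O(1)$ gradient contributions to cancel, reducing everything to these $O(h)$ quadrature and truncation errors.
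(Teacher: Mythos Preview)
Your argument is correct and is essentially the paper's own proof, only reorganized: the paper shifts the error by $(\vec{d}^{h},0,0)$ to land in $K$ and then bounds it by the explicit comparison element $(\vec{\Lambda}_{\vec{U}}-\vec{d}^{h},\vec{\Lambda}_{\vec{v}},\vec{\Lambda}_{\vec{\omega}})$, where $(\vec{\Lambda}_{\vec{U}},\vec{\Lambda}_{\vec{v}},\vec{\Lambda}_{\vec{\omega}})$ is exactly the discrete gradient of the exact pressure minus $\vec{G}$, which lies in $K^{\perp}$. Your duality estimate of $\norm{P_{K}\vec{G}}_{E_{h}}$ reproduces this same bound (inserting $\dG\bar p$, integrating by parts, and invoking Lemma~\ref{lemma-n-GH-J-Jh}(iii)), so the two routes coincide up to bookkeeping; the paper's version is a little shorter because it names the comparison element directly instead of testing against all $w\in K$.
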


\begin{proof} From the decompositions \eqref{eq-continuous-decomposition}
	and \eqref{eq-discrete-decomposition}, we have 
	\begin{multline*}
	\left(\begin{array}{c}
	\vec{U}_{e}-\vec{d}^{h}\\[9pt]
	\vec{v}_{e}\\[9pt]
	\vec{\omega}_{e}
	\end{array}\right)=\left(\begin{array}{c}
	{\displaystyle \frac{1}{\rho}\dG p^{h}-\frac{1}{\rho}\nabla p-\vec{d}^{h}}\\[9pt]
	{\displaystyle \frac{1}{m}\int(p^{h}\dG H)\, d\Omega^{h}+\frac{1}{m}\int_{\Gamma}p\vec{n}\, dS}\\[9pt]
	{\displaystyle \bI^{-1}\int(p^{h}\vec{J}^{h})\, d\Omega^{h}+\bI^{-1}\int_{\Gamma}p\vec{J}\, dS}
	\end{array}\right)\\[6pt]
	=\left(\begin{array}{c}
	{\displaystyle \frac{1}{\rho}(\dG p^{h}-\dG p)}\\[9pt]
	{\displaystyle \frac{1}{m}{\displaystyle \int(p^{h}-p)\dG H\, d\Omega^{h}}}\\[9pt]
	{\displaystyle \bI^{-1}\int(p^{h}-p)\vec{J}^{h}\, d\Omega^{h}}
	\end{array}\right)+\left(\begin{array}{c}
	{\displaystyle \frac{1}{\rho}\dG p-\frac{1}{\rho}\nabla p-\vec{d}^{h}}\\[9pt]
	{\displaystyle \frac{1}{m}\int_{\Gamma}p\vec{n}\, dS+\frac{1}{m}\int(p\dG H)\, d\Omega^{h}}\\[9pt]
	{\displaystyle \bI^{-1}\int_{\Gamma}p\vec{J}\, dS+\bI^{-1}\int p\vec{J}^{h}\, d\Omega^{h}}
	\end{array}\right)
	\end{multline*}
	We have shown that $\mathcal{L}^{h}(\vec{U}_{e}-\vec{d}^{h},\vec{v}_{e},\vec{\omega}_{e})=0$.
	Now we show that $(\vec{U}_{e}-\vec{d}^{h},\vec{v}_{e},\vec{\omega}_{e})$
	is the projection of $(\vec{\Lambda}_{\vec{U}}-\vec{d}^{h},\vec{\Lambda}_{\vec{v}},\vec{\Lambda}_{\vec{\omega}})$,
	where 
	\begin{align*}
	\vec{\Lambda}_{\vec{U}} & :=\frac{1}{\rho}\dG p-\frac{1}{\rho}\nabla p,\\
	\vec{\Lambda}_{\vec{v}} & :=\frac{1}{m}\int_{\Gamma}p\vec{n}\, dS+\frac{1}{m}\int p\dG H\, d\Omega^{h},\\
	\vec{\Lambda}_{\vec{\omega}} & :=\bI^{-1}\int_{\Gamma}p\vec{J}\, dS+\mathbb{I}^{-1}\int p\vec{J}^{h}\, d\Omega^{h},
	\end{align*}
	which follows if we prove that $(\vec{U}_{e}-\vec{d}^{h},\vec{v}_{e},\vec{\omega}_{e})$
	and 
	\[
	\left(\frac{1}{\rho}(\dG p^{h}-\dG p),\;\frac{1}{m}\int(p^{h}-p)\dG H\, d\Omega^{h},\;\bI^{-1}\int(p^{h}-p)\vec{J}^{h}\, d\Omega^{h}\right)
	\]
	are orthogonal with respect to the inner product $\ip{\cdot,\cdot}_{E_{h}}$.
	Indeed, the discrete integration by parts yields that their inner
	product equals 
	\begin{multline*}
	\frac{1}{2}\int(-\dD H(\vec{U}_{e}-\vec{d}^{h})+\vec{v}_{e}\cdot\dG H+\vec{\omega}_{e}\cdot\vec{J}^{h})(p^{h}-p)\, d\Omega^{h}\\
	=\frac{1}{2}\int\mathcal{L}^{h}(\vec{U}_{e}-\vec{d}^{h},\vec{v}_{e},\vec{\omega}_{e})(p^{h}-p)\, d\Omega^{h}=0.
	\end{multline*}
	The orthogonality implies 
	\begin{align*}
	\norm{(\vec{U}_{e},\vec{v}_{e},\vec{\omega}_{e})}_{E_{h}}-\norm{(\vec{d}^{h},0,0)}_{E_{h}} & \le\norm{(\vec{U}_{e}-\vec{d}^{h},\vec{v}_{e},\vec{\omega}_{e})}_{E_{h}}\\
	& \le\norm{(\vec{\Lambda}_{\vec{U}}-\vec{d}^{h},\vec{\Lambda}_{\vec{v}},\vec{\Lambda}_{\vec{\omega}})}_{E_{h}}\\
	& \le\norm{(\vec{\Lambda}_{\vec{U}},\vec{\Lambda}_{\vec{v}},\vec{\Lambda}_{\vec{\omega}})}_{E_{h}}+\norm{(\vec{d}^{h},0,0)}_{E_{h}}
	\end{align*}
	The estimation for $\vec{d}^{h}$ in \eqref{eq-dh} implies that for
	$d=2$ we have 
	\begin{align*}
	\norm{(\vec{d}^{h},0,0)}_{E_{h}}^{2} & =\sum_{i,j}\frac{1}{2}\left(\rho_{i+\frac{1}{2},j}\, H_{i+\frac{1}{2},j}\,(d_{i+\frac{1}{2},j}^{h})^{2}+\rho_{i,j+\frac{1}{2}}\, H_{i,j+\frac{1}{2}}(d_{i,j+\frac{1}{2}}^{h})^{2}\right)h^{2}\\
	& =\sum_{H_{i+\frac{1}{2},j}=H_{i,j+\frac{1}{2}}=1}O(h^{4})\, h^{2}+\sum_{0<H_{i+\frac{1}{2},j},\; H_{i,j+\frac{1}{2}}<1}O(h^{2})\, h^{2}\\
	& =O(h^{4})+O(h^{3})=O(h^{3})
	\end{align*}
	and for $d=3$ we similarly have 
	\begin{multline*}
	\norm{(\vec{d}^{h},0,0)}_{E_{h}}^{2}=\sum_{H_{i+\frac{1}{2},j,k}=H_{i,j+\frac{1}{2},k}=H_{i,j,k+\frac{1}{2}}=1}O(h^{4})\, h^{3}\\
	+\sum_{0<H_{i+\frac{1}{2},j,k},H_{i,j+\frac{1}{2},k},H_{i,j,k+\frac{1}{2}}<1}O(h^{2})\, h^{3}=O(h^{4})+O(h^{3})=O(h^{3}).
	\end{multline*}
	Here, we used the fact that the number of inside edges, where $H=1$,
	grows as $O(h^{d})$ and that of edges near the boundary, where $0<H<1,$
	grows as $O(h^{d-1})$.
	
	And Lemma \ref{lemma-n-GH-J-Jh} (iii) shows 
	\begin{align*}
	m\vec{\Lambda}_{\vec{v}} & =\int_{\Gamma}p\vec{n}\, dS+\int(p\dG H)\, d\Omega^{h}=\sum_{\vec{x}_{h}\in\Omega_{\Gamma}^{h}}\;\int_{\Gamma\cap C_{\vec{x}_{h}}}(p-p_{\vec{x}_{h}})\vec{n}\, dS=O(h)
	\end{align*}
	and 
	\[
	\bI\,\vec{\Lambda}_{\vec{\omega}}=\int_{\Gamma}p\vec{J}\, dS+\intop p\vec{J}^{h}\, d\Omega^{h}=O(h).
	\]

	On the other hand, the standard central finite difference operator
	$G$ gives 
	\[
	\dG p-\nabla p=\rho\,\vec{\Lambda}_{\vec{U}}=O(h^{2}).
	\]
	Combining all the estimations for $\vec{\Lambda}_{\vec{U}}$, $\vec{\Lambda}_{\vec{v}}$,
	and $\vec{\Lambda}_{\vec{\omega}}$, we conclude that 
	\begin{align*}
	\norm{(\vec{\Lambda}_{\vec{U}},\vec{\Lambda}_{\vec{v}},\vec{\Lambda}_{\vec{\omega}})}_{E_{h}}^{2} & =\frac{1}{2}\int H(\dG p-\nabla p)\cdot(\dG p-\nabla p)\, d\Omega^{h}\\
	& \qquad+\frac{1}{2}m\vec{\Lambda}_{v}\cdot\vec{\Lambda}_{v}+\frac{1}{2}\vec{\Lambda}_{\omega}\cdot\bI\vec{\Lambda}_{\omega}\\
	& =O(h^{4})+O(h^{2})+O(h^{2})=O(h^{2}).
	\end{align*}
	Consequently, we have 
	\begin{align*}
	\norm{(\vec{U}_{e},\vec{v}_{e},\vec{\omega}_{e})}_{E_{h}} & \le\norm{(\vec{\Lambda}_{\vec{U}},\vec{\Lambda}_{\vec{v}},\vec{\Lambda}_{\vec{\omega}})}_{E_{h}}+2\norm{(\vec{d}^{h},0,0)}_{E_{h}}\\
	& =O(h)+O(h^{1.5})=O(h),
	\end{align*}
	which proves the theorem. \end{proof}

\section{Numerical test}\label{sec6}

\subsection{Orthogonality and stability }
We showed in Theorem \ref{thm-discrete-energy} that the augmented
Hodge projection satisfies the orthogonality and the stability, and
numerically validate both of the properties in this example. In $[-2,2]\times[-4,4]$,
a solid with mass $m=4$ and inertia $\bI=2$ is located in a ball
centered at $(0,0)$ with radius $1$. Fluid with density $\rho=1$
fills the rectangle outside the solid. In this setting, the augmented
projection is performed on $\vec{U}^{*}=(0,-1)$, $\vec{v}^{*}=(\cos\theta,\sin\theta)$,
and $\vec{\omega}^{*}=0$ for each angle $\theta\in[0,2\pi]$. On
the boundary of the rectangle, the periodic boundary condition is
imposed to focus on the interface between fluid and solid.
Figure~\ref{fig:Streamlines_example1} depicts the velocity fields
before and after the projection, and the streamlines in the case $\theta=0$.
Note that the velocity fields before the projection does not satisfy
the non-penetration condition $\vec{U}\cdot\vec{n}=\vec{U}^{solid}\cdot\vec{n}$.
After the projection, the solid vector field is clearly uniform and
the non-penetration condition is now satisfied.

\begin{figure}
	\label{fig:Streamlines_example1} 
	
	\begin{centering}
		\includegraphics[scale=0.14]{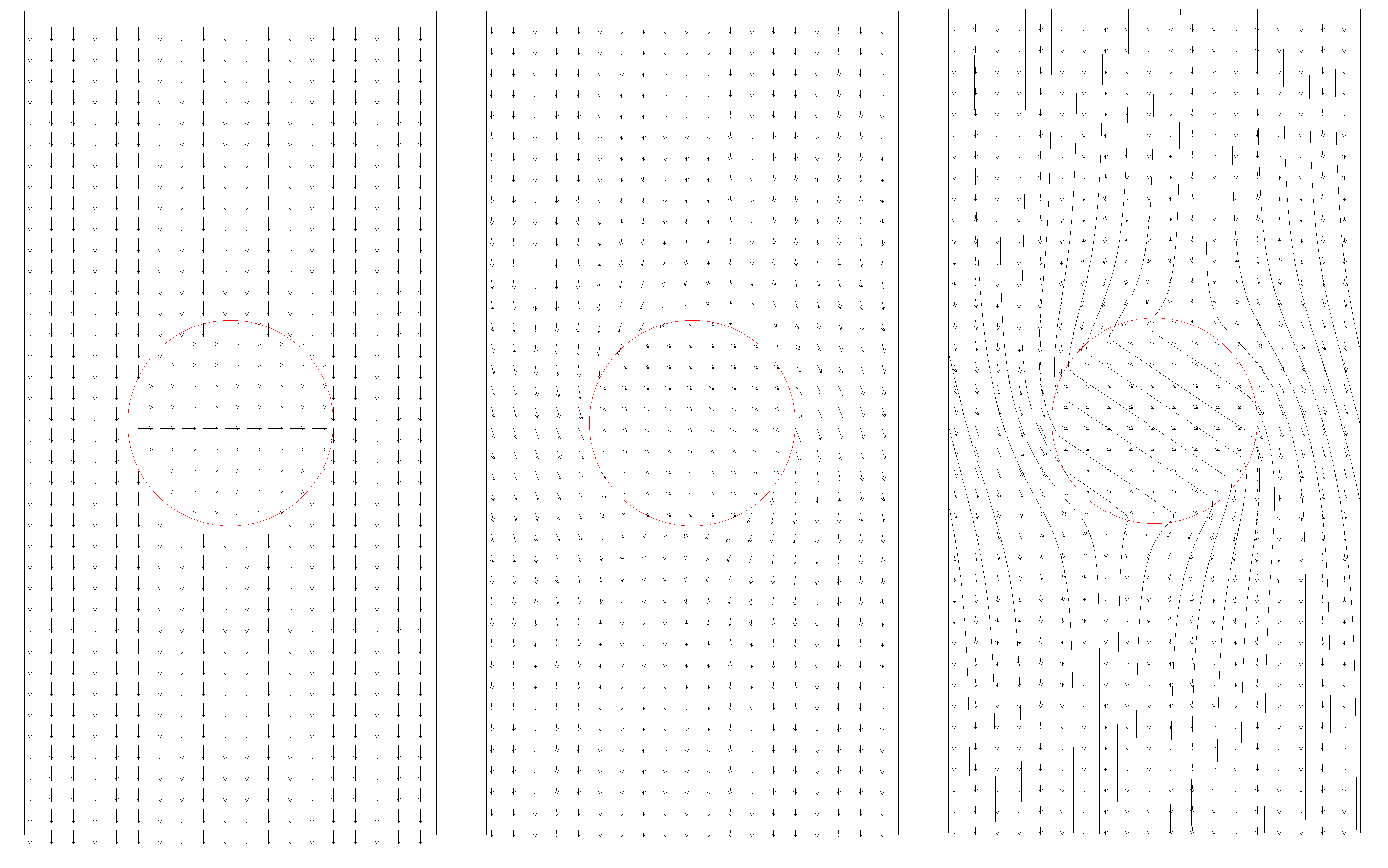} 
		\par\end{centering}
	
	\caption{The velocity fields of fluid and solid before (left) and after (middle)
		the augmented Hodge projection, and the streamlines (right) after
		projection.}
\end{figure}

Figure \ref{fig:Stability_example1} validates the theorem that the
orthogonality condition 
\[
\ip{(\vec{U}^{h},\vec{v}^{h},\vec{\omega}^{h}),(\vec{U}^{*}-\vec{U}^{h},\vec{v}^{*}-\vec{v}^{h},\vec{\omega}^{*}-\vec{\omega}^{h})}_{E_{h}}=0
\]
and the stability condition $\norm{(\vec{U}^{h},\vec{v}^{h},\vec{\omega}^{h})}_{E_{h}}\le\norm{(\vec{U}^{*},\vec{v}^{*},\vec{\omega}^{*})}_{E_{h}}$
are satisfied for all $\theta\in\left[0,2\pi\right]$. The numerical
solution $(\vec{U}^{h},\vec{v}^{h},\vec{\omega}^{h})$ was computed
in a uniform grid $40\times80$.

\begin{figure}
	\label{fig:Stability_example1} 
	
	\begin{centering}
		\includegraphics[scale=0.67]{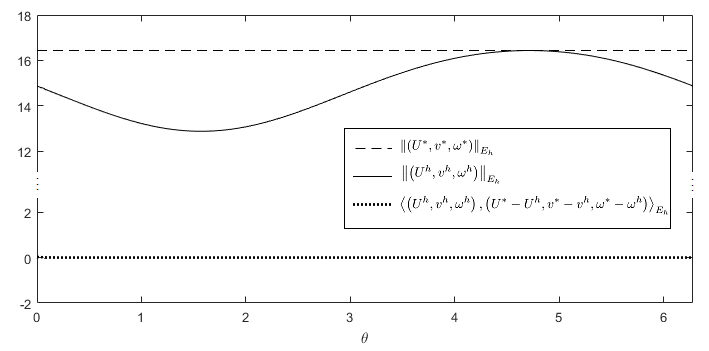} 
		\par\end{centering}
	
	\caption{The kinetic energy before (dashed line) and after (solid line) the
		augmented Hodge projection, and the inner product. This figure validates
		$\norm{(\vec{U}^{h},\vec{v}^{h},\vec{\omega}^{h})}_{E_{h}}\le\norm{(\vec{U}^{*},\vec{v}^{*},\vec{\omega}^{*})}_{E_{h}}$
		and $\ip{(\vec{U}^{h},\vec{v}^{h},\vec{\omega}^{h}),(\vec{U}^{*}-\vec{U}^{h},\vec{v}^{*}-\vec{v}^{h},\vec{\omega}^{*}-\vec{\omega}^{h})}_{E_{h}}=0$.}
\end{figure}

\subsection{Convergence in two dimensions}
We showed in Theorem \ref{them-convergence} that $$\norm{(\vec{U}^{h},\vec{v}^{h},\vec{\omega}^{h})-(\vec{U},\vec{v},\vec{\omega}}_{E_{h}}=O(h),$$
which implies that each of the approximations $\vec{U}^{h}\simeq\vec{U}$,
$\vec{v}^{h}\simeq\vec{v}$, and $\vec{\omega}^{h}\simeq\vec{\omega}$
is at least first-order accurate. In this example, we numerically
validate the convergence order. Inside $\left[-\frac{\pi}{2},\frac{\pi}{2}\right]^{2}$,
a solid with mass $m=1$ and inertia $\bI=\bE$ is located in a domain
$\left\{ \left(x,y\right)|\cos x\cos y<\frac{\sqrt{3}}{2}\right\} $.
Fluid with density $\rho=1$ fills the rectangle outside the solid.
In this setting, the augmented Hodge projection is performed on $\vec{U}^{*}=\vec{U}+\frac{1}{\rho}\nabla p$,
$\vec{v}^{*}=0-\frac{1}{m}\intop_{\Gamma}p\vec{n}ds$, and $\vec{\omega}^{*}=0-\bI^{-1}\intop_{\Gamma}p\vec{J}ds$
with $\vec{U}\left(x,y\right)=\left(\cos x\sin y,-\sin x\cos y\right)$
and $p\left(x,y\right)=e^{-\left(x-1\right)^{2}+y}$.

Note that $\vec{U}$ satisfies the incompressibility condition $\nabla\cdot\vec{U}=0$
and the non-penetration condition $\vec{U}\cdot\vec{n}=0$ on the
rectangular boundary as well as on the interface $\Gamma=\{(x,y) | \cos x\cos y=\frac{\sqrt{3}}{2}\}.$ Numerically computed solutions $(\vec{U}^{h},\vec{v}^{h},\vec{\omega}^{h})$ are compared to the exact solution $\left(\vec{U},\vec{v}=0,\vec{\omega}=0\right).$ The exact values of the boundary integrals are $$
\int_{\Gamma}p\vec{n}\, dS=[-0.61757657740494\cdots,-0.35222653922478\cdots]
$$ and $$\int_{\Gamma}p\vec{J}\, dS=-0.0000757711760965\cdots.$$
Table \ref{tab:Convergence-in-two}  reports the numerical results. The convergence order of $\norm{(\vec{U}^{h},\vec{v}^{h},\vec{\omega}^{h})-(\vec{U},\vec{v},\vec{\omega})}_{E_h}$ with respect to $h$ fluctuates, but its least squares fit, as plotted in Figure \ref{fig:Least-square-fit_graph}, indicates that the convergence order is $1.61.$ In the following estimate obtained from combining Lemma \ref{lemma-n-GH-J-Jh} (iii) and the last inequality in the proof of Theorem \ref{them-convergence}, 
\[
\norm{(\vec{U}-\vec{U}^{h},\vec{v}-\vec{v}^{h},\vec{\omega}-\vec{\omega}^{h})}_{E_{h}}
\le\frac{\abs{\Gamma}\left((\diam\Gamma)+1\right)\norm{\nabla p}_{L^{\infty}}}{2}\, h+O(h^{1.5}),
\]
the first term becomes dominant and forms an upper bound of the error for sufficiently small $h.$ All the errors in Figure \ref{fig:Least-square-fit_graph} are certainly below the upper bound, which validates the theorem.
We deduced from the theorem that each of $\vec U^h\simeq \vec U, \vec v^h \simeq v, \vec \omega^h\simeq \vec \omega$ is at least first order accurate. Table \ref{tab:individual error} confirms the deduction.

\begin{table}
	\begin{centering}
		\begin{tabular}{|c|c|c|c|}
			\hline 
			grid  &  h & $\norm{(\vec{U}-\vec{U}^{h},\vec{v}-\vec{v}^{h},\vec{\omega}-\vec{\omega}^{h})}_{E_{h}}$  & order\tabularnewline
			\hline 
			\hline 
			$20^{2}$  & $\frac{\pi}{20}$ & $3.14\times10^{-2}$  & \tabularnewline
			\hline 
			$40^{2}$  & $\frac{\pi}{40}$ & $5.01\times10^{-3}$  & 2.65\tabularnewline
			\hline 
			$80^{2}$  & $\frac{\pi}{80}$ & $3.42\times10^{-3}$  & 0.551\tabularnewline
			\hline 
			$160^{2}$  & $\frac{\pi}{160}$ & $5.70\times10^{-4}$  & 2.58\tabularnewline
			\hline 
			$320^{2}$  & $\frac{\pi}{320}$ &  $4.17\times10^{-4}$  & 0.451\tabularnewline
			\hline 
			$640^{2}$  & $\frac{\pi}{640}$ &  $6.99\times10^{-5}$  & 2.57\tabularnewline
			\hline 
		\end{tabular}
		\par\end{centering}
	
	\caption{Convergence order in the example of two dimensions. The order fluctuates, and its least squares fitting is tried in Figure \ref{fig:Least-square-fit_graph}.  \label{tab:Convergence-in-two}}
\end{table}

\begin{figure}

	\begin{centering}
		\includegraphics[scale=0.5]{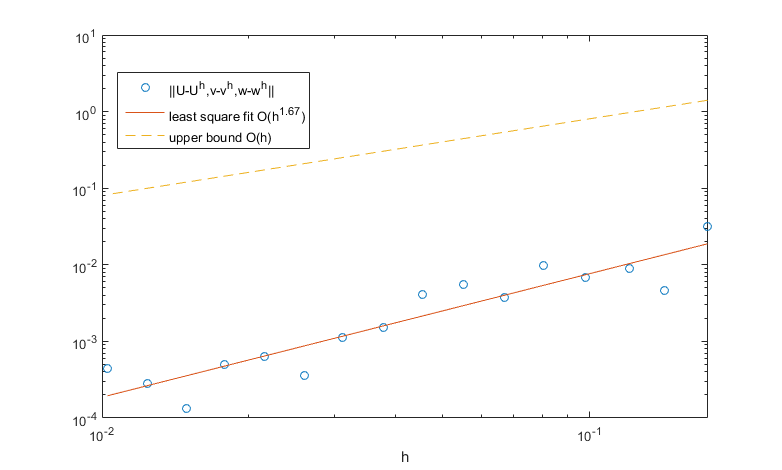} 
		\par\end{centering}
	
	\caption{ Log-plot of the least squares fit of the errors 
		$
		\|\left(\vec U^h, \vec v^h, \vec \omega^h\right) - \left(\vec U, \vec v, \vec \omega\right)\|_{E_h}
		$ in the example of two dimensions. The errors are certainly below the upper bound $O(h).$
		\label{fig:Least-square-fit_graph}}
\end{figure}

\begin{table}
	\begin{centering}
		\begin{tabular}{|c|c|}
			\hline 
			Individual error  & order\tabularnewline
			\hline 
			\hline 
			$\norm{\vec{U}-\vec{U}^{h}}_{L^{2}}$  & $O(h^{1.61})$\tabularnewline
			\hline 
			$\abs{\vec{v}-\vec{v}^{h}}$  & $O(h^{1.73})$\tabularnewline
			\hline 
			$\abs{\vec{\omega}-\vec{\omega}^{h}}$  & $O(h^{2.72})$\tabularnewline
			\hline 
		\end{tabular}
		\par\end{centering}
	
	\caption{The convergence order of each individual error in the sum $\|\left(\vec{U}^{h},\vec{v}^{h},\vec{\omega}^{h}\right)-\left(\vec{U},\vec{v},\vec{\omega}\right)\|_{E_h}^2=\frac{\rho}{2}\|\vec U^h-\vec U\|_{L^2}^2+\frac{m}{2}\|\vec v^h -\vec v\|^2+\frac{\bI}{2}\|\vec \omega^h -\vec \omega\|^2.$  For the same data in Figure \ref{fig:Least-square-fit_graph}, the order was obtained from the least squares fit of each error.\label{tab:individual error}}
\end{table}

\subsection{Convergence in three dimensions}
We measure the convergence order of the discrete Hodge projection
in a three-dimensional example.  In $[-2,2]^{2}\times[-4,4]$,
a solid with mass $m=\frac{8}{3}\pi$ and inertia $\bI=\frac{16}{15}\pi\,\mathbb{E}$
is located in a ball centered at $(0,0,0)$ of radius one. We recall that $\mathbb{E}$ is the $3\times 3$ identity matrix. Fluid with density $\rho=1$ fills the box outside the solid. In this setting, the augmented Hodge 
projection is performed on $\vec{U}^{*}=(0,0,-1)$, $\vec{v}^{*}=(0,0,-1)$, and $\vec{\omega}^{*}=(0,0,0)$.
The non-penetration condition is imposed on the rectangular boundary. Figure \ref{fig:three_dimension_setting} illustrates the setting. 

\begin{figure}
	\begin{centering}
		\includegraphics[scale=0.33]{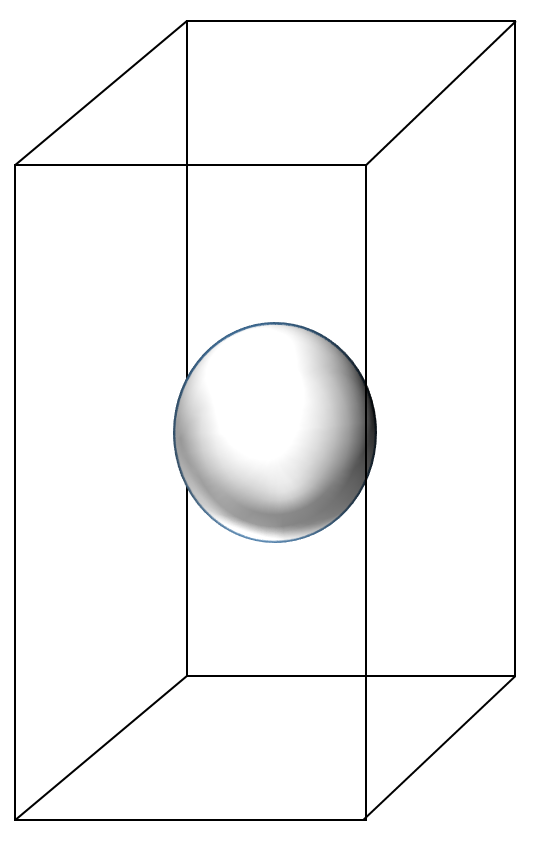}
		\includegraphics[scale=0.35]{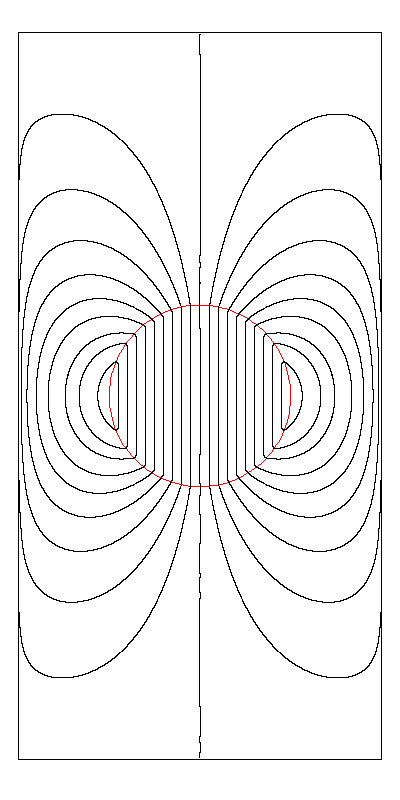} 
		\par\end{centering}
	
	\caption{The physical setting for the example of three dimensions (left) and the streamlines after the projection in section ${x=0}$ (right). \label{fig:three_dimension_setting}}
\end{figure}

Since the exact solution is unknown, we instead measure the convergence order through $\norm{(\vec{U}^{2h}-\vec{U}^{h},\vec{v}^{2h}-\vec{v}^{h},\vec{\omega}^{2h}-\vec{\omega}^{h})}_{E_{h}}$,
based on the fact that we have
\begin{align*}
\norm{(\vec{U}^{2h}-\vec{U}^{h},\vec{v}^{2h}-\vec{v}^{h},\vec{\omega}^{2h}-\vec{\omega}^{h})}_{E_{h}} & \le\norm{(\vec{U}-\vec{U}^{2h},\vec{v}-\vec{v}^{2h},\vec{\omega}-\vec{\omega}^{2h})}_{E_{h}}\\
& \qquad+\norm{(\vec{U}-\vec{U}^{h},\vec{v}-\vec{v}^{h},\vec{\omega}-\vec{\omega}^{h})}_{E_{h}}\\
& =O(h^{\alpha}+2^{\alpha}h^{\alpha})=O(h^{\alpha}),
\end{align*}
whenever $\norm{(\vec{U}-\vec{U}^{h},\vec{v}-\vec{v}^{h},\vec{\omega}-\vec{\omega}^{h})}_{E_{h}}=O(h^{\alpha})$ for the exact solution $\left(\vec U,\vec v,\vec \omega \right)$.

\begin{table}
	\begin{centering}
		\begin{tabular}{|c|c|c|c|}
			\hline 
			grid  & $h$  & $\norm{(\vec{U}^{2h},\vec{v}^{2h},\vec{\omega}^{2h})-(\vec{U}^{h},\vec{v}^{h},\vec{\omega}^{h})}_{E_{h}}$  & order\tabularnewline
			\hline 
			\hline 
			$16^{2}\times32$  & $\frac{1}{4}$  & $6.16\times10^{-2}$  & \tabularnewline
			\hline 
			$32^{2}\times64$  & $\frac{1}{8}$  & $1.62\times10^{-2}$  & 1.92\tabularnewline
			\hline 
			$64^{2}\times128$  & $\frac{1}{16}$  & $4.31\times10^{-3}$  & 1.91\tabularnewline
			\hline 
			$128^{2}\times256$  & $\frac{1}{32}$  & $1.26\times10^{-3}$  & 1.77\tabularnewline
			\hline 
			$256^{2}\times512$  & $\frac{1}{64}$  & $3.74\times10^{-4}$  & 1.75\tabularnewline
			\hline 
		\end{tabular}
	\end{centering}
	
	\caption{The convergence error 
		in the example of three dimensions\label{tab:three_dimension_sum}}
\end{table}

Table \ref{tab:three_dimension_sum} reports the numerical results, which validates the convergence order $O\left(h\right)$ shown in Theorem \ref{them-convergence}. Also the table shows that the order is actually far greater than one. Table \ref{tab:three_dimension_indi} indicates that $\norm{\vec{U}^{2h}-\vec{U}^{h}}_{L^{2}}=O(h^{1.6})$ and $\abs{\vec{v}^{h}-\vec{v}^{2h}}+\abs{\vec{\omega}^{h}-\vec{\omega}^{2h}}=O(h^2)$. The convergence error $\norm{(\vec{U}^{2h},\vec{v}^{2h},\vec{\omega}^{2h})-(\vec{U}^{h},\vec{v}^{h},\vec{\omega}^{h})}_{E_{h}}$ can be regarded as a sum of $\norm{\vec{U}^{2h}-\vec{U}^{h}}_{L^{2}}$ and $\abs{\vec{v}^{h}-\vec{v}^{2h}}+\abs{\vec{\omega}^{h}-\vec{\omega}^{2h}}$, when the order calculation is a matter of interests. It implies that $\norm{(\vec{U}^{2h},\vec{v}^{2h},\vec{\omega}^{2h})-(\vec{U}^{h},\vec{v}^{h},\vec{\omega}^{h})}_{E_{h}}$ is mainly influenced by $\norm{\vec{U}^{2h}-\vec{U}^{h}}_{L^{2}}=O(h^{1.6})$ rather than by the other term, when $h$ becomes smaller. This explains why the convergence order in Table \ref{tab:three_dimension_sum} slowly decreases toward $1.6$.

\begin{table}
	\begin{centering}
		\begin{tabular}{|c|c|c|c|c|c|}
			\hline 
			grid  & $h$  & $\norm{\vec{U}^{2h}-\vec{U}^{h}}_{L^{2}}$  & order  & $\abs{\vec{v}^{h}-\vec{v}^{2h}}+\abs{\vec{\omega}^{h}-\vec{\omega}^{2h}}$  & order\tabularnewline
			\hline 
			\hline 
			$16^{2}\times32$  & $\frac{1}{4}$  & $2.65\times10^{-2}$  &  & $5.56\times10^{-2}$  & \tabularnewline
			\hline 
			$32^{2}\times64$  & $\frac{1}{8}$  & $8.74\times10^{-3}$  & 1.60  & $1.36\times10^{-2}$  & 2.03\tabularnewline
			\hline 
			$64^{2}\times128$  & $\frac{1}{16}$  & $2.61\times10^{-3}$  & 1.74  & $3.43\times10^{-3}$  & 1.98\tabularnewline
			\hline 
			$128^{2}\times256$  & $\frac{1}{32}$  & $9.30\times10^{-4}$  & 1.48  & $8.62\times10^{-4}$  & 1.99\tabularnewline
			\hline 
			$256^{2}\times512$  & $\frac{1}{64}$  & $3.50\times10^{-4}$  & 1.60  & $2.16\times10^{-4}$  & 2.00\tabularnewline
			\hline 
		\end{tabular}
	\end{centering}
	\caption{The convergence errors of $\norm{\vec{U}^{2h}-\vec{U}^{h}}_{L^{2}}$  and $\abs{\vec{v}^{h}-\vec{v}^{2h}}+\abs{\vec{\omega}^{h}-\vec{\omega}^{2h}}$ in the example of three dimensions \label{tab:three_dimension_indi}}
\end{table}

\section{Conclusion and comment}\label{sec7}
In this work, we have studied a Fluid-Solid interaction by taking a monolithic treatment on fluid-solid interaction. It is based on the fact  that fluid velocity field $\vec U$ and solid velocities $\vec v$ and $\vec \omega$ do not separately proceed but as a whole by a combined state variable $(\vec U, \vec v, \vec \omega)$.
We introduced the so-called augmented Hodge decomposition of the state variable into two orthogonal components, which is a variation of the Hodge decomposition.
The decomposition enables us to decouple the computations of the velocity
and the pressure in the incompressible Navier-Stokes equation.
Then, the decomposition is fulfilled by solving an elliptic equation for the pressure with non-local Robin type boundary condition.
We have shown the existence, uniqueness and the regularity of the solution
to the equation. The monolithic treatment leads to the stability that
the kinetic energy does not increase in the projection step. Using an Heaviside function, we expressed the boundary condition independent of the interface. We proposed a numerical method producing the numerical solution at least with first order accuracy. Also, we showed that the unique decomposition and orthogonality also hold in the discrete setting. We carried out numerical experiments for 2 and 3 dimensions and the numerical tests validate our analysis and arguments. Even though the experiments supports our arguments, $\norm{(\vec{U}^{h},\vec{v}^{h},\vec{\omega}^{h})-(\vec{U},\vec{v},\vec{\omega})}_{E_h}=O(h),$ it reveals that the convergence order is greater than one and the error is mainly influenced by $\norm{\vec{U}^{2h}-\vec{U}^{h}}_{L^{2}}=O(h^{1.6})$ rather than by the other term, when $h$ becomes smaller. The analysis for the phenomenon would involve very different arguments from the current one, and we put it off to future work.

\end{document}